\newcommand{\E}{\mathbb{E}}
\newcommand{\eps}{\epsilon}
\theoremstyle{plain}%
\newtheorem{theorem}{Theorem}[section]
\newtheorem{lemma}[theorem]{Lemma}
\newtheorem{proposition}[theorem]{Proposition}
\newtheorem{corollary}[theorem]{Corollary}
\newtheorem{question}[theorem]{Question}
\theoremstyle{definition}
\newtheorem{definition}[theorem]{Definition}
\newtheorem{example}[theorem]{Example}
\theoremstyle{remark}
\newtheorem{remark}[theorem]{Remark}
\def\calF{{\mathcal F}}
\newcommand{\g}[1]
{g_{_{#1}}}
\newcommand{\rc}[1]
{\textcolor{red}{#1}}
\newcommand{\br}[1]
{\left(#1\right)}
\title{Where to stand when playing darts?}
\author{Björn G. Franzén
\thanks{Chalmers University of Technology, Gothenburg, 
Sweden.\ \ Email:
        \hbox{bjorn.gunnar.franzen@gmail.com}}
\and         Jeffrey E. Steif
\thanks{Chalmers University of Technology and Gothenburg University, Gothenburg, Sweden.\ \ Email:
        \hbox{steif@chalmers.se}}
\and         Johan Wästlund
\thanks{Chalmers University of Technology and Gothenburg University, Gothenburg, Sweden.\ \ Email:
        \hbox{wastlund@chalmers.se}}
}
\date{\today}
\begin{document}

\maketitle

\begin{abstract}

This paper analyzes the question of where one should stand when playing darts.
If one stands at distance $d>0$ and aims at $a\in \mathbb{R}^n$, then the dart (modelled by a random vector $X$ in $\mathbb{R}^n$)
hits a random point given by $a+dX$. Next, given a payoff function $f$, one considers
$$
\sup_a Ef(a+dX)
$$
and asks if this is decreasing in $d$;  i.e., whether it is better to stand closer rather than farther
from the target.  Perhaps surprisingly, this is not always the case and understanding 
when this does or does not occur is the purpose of this paper.

We show that if $X$ has a so-called {\em selfdecomposable} distribution, then it is always better
to stand closer for any payoff function. This class includes all stable distributions as well as many more.

On the other hand, if the payoff function is $\cos(x)$, then it is always better to stand closer
if and only if the characteristic function $|\phi_X(t)|$ is decreasing on $[0,\infty)$.
We will then show that if there are at least two point masses, then it is not always better to stand closer using $\cos(x)$.
If there is a single point mass, one can find a different payoff function to obtain this phenomenon.

Another large class of darts $X$ for which there are bounded continuous payoff functions for which
it is not always better to stand closer are distributions with compact
support. This will be obtained by using the fact that the Fourier transform of such distributions 
has a zero in the complex plane. This argument will work whenever there is a complex zero of the Fourier transform.

Finally, we analyze if the property of it being better to stand closer is
closed under convolution and/or limits.

 \medskip\noindent
 \emph{Keywords and phrases.} darts, selfdecomposable distributions, Fourier transforms.
 \newline
 MSC 2010 \emph{subject classifications.}
 Primary 60E10
  \medskip\noindent
\end{abstract}

%\clearpage  JEFF

%\newpage

\tableofcontents

% \clearpage

\bigskip

\bigskip

\section{Introduction}

\subsection{Model and Main Results}

We begin immediately by formalizing the notion of a general dart game in $\mathbb{R}^n$.

%\subsection{A mathematical model of dart throwing}

\begin{definition} A \textbf{dart} is a random vector $X$ taking values in $\mathbb{R}^n$. 
It represents the distribution of where you hit the target ($\mathbb{R}^n$)
when you stand at distance one and aim at the origin.
\end{definition}

\begin{definition} 
A \textbf{payoff function} $f$ is a  measurable 
function from $\mathbb{R}^n$ to $\mathbb{R}$ which is bounded from above. 
\end{definition} 

Given a dart $X$ and a player standing at distance $d$ aiming at $a\in \mathbb{R}$, 
the distribution of where she hits the target is modelled by $a+dX$.
Assuming you want to maximize the expected payoff with respect to a given payoff function $f$
from a certain distance, it is natural to consider the function
\begin{definition}
\begin{equation}
    g_{_{X,f}}(d):=\sup_{a\in\mathbb{R}^n} Ef(a+dX)
\end{equation}
defined for $d>0$.
\end{definition}
So $g_{_{X,f}}(d)$ is the best you can achieve
with dart $X$, standing at distance $d$ with 
payoff function $f$. Note that the supremum is not always assumed.

\medskip\noindent
{\bf Question:} Is it always better to stand closer to the target? I.e., is $g_{_{X,f}}(d)$ a decreasing function of $d$?

\medskip
Perhaps surprisingly, the answer is no.
We start off by quickly giving a simple example showing
that this is not necessarily the case.
In one dimension, let $X$ be uniform on $[0,2]$ and $f$ be 1 on intervals of the form $[2k,2k+1]$
and 0 on intervals of the form $(2k-1,2k)$ where $k$ is an integer. It is then immediate to 
check that $\g{X,f}(1)=1/2$ (and it doesn't matter where you aim) but $\g{X,f}(3/2)=2/3$
(by aiming e.g.\ at 1.5). We will later see how this is related to a more general 
phenomenon where the behavior of the characteristic function of $X$ will play a central role, 
see Theorem \ref{theorem:cfeasy}.

We introduce the following concepts which capture those situations where standing closer is 
better.

\begin{definition} 
The pair $(X,f)$ is \textbf{reasonable} if $g_{_{X,f}}(d)$ is decreasing in $d$. 
The dart $X$ is \textbf{reasonable with respect to} a family $\calF$ of payoff functions 
if $(X,f)$ is reasonable for all $f\in \calF$. If $(X,f)$ is reasonable for all 
payoff functions $f$, then $X$ is said to be \textbf{reasonable}. The payoff function 
$f$ is \textbf{reasonable with respect to} a family $\mathcal{X}$  of darts
if $(X,f)$ is reasonable for all $X\in \mathcal{X}$. If $(X,f)$ is reasonable for all darts 
$X$, then $f$ is said to be \textbf{reasonable}.
\end{definition} 
We will often use the expression that ``$X$ is reasonable with respect to $f$'' to mean 
that $(X,f)$ is reasonable.
One of the central goals of this paper is to try to determine which darts $X$ are reasonable, either 
against a given payoff function $f$ or a family of payoff functions.

We now wish to introduce a large collection of darts which turn out to be reasonable, and 
to do this we recall the notion of a \emph{selfdecomposable} probability measure
(see \cite{sato1999levy}). However, we first need to recall what it means for one random 
vector to divide another random vector.

\begin{definition} 
We say a random vector $X$ \textbf{divides} a random vector $Y$, written $X|Y$
if there exists a random vector $Z$ so that if $Z$ and $X$ are independent,
then $X+Z$ and $Y$ have the same distribution.
\end{definition}  

\begin{definition} 
A random vector $X$ is \textbf{selfdecomposable} if for all $d > 1$, $X|dX$.
\end{definition} 

Our first theorem, to be proved in Section \ref{section:selfdecomposible}, tells us that being selfdecomposable is a sufficient condition
for being reasonable.

\begin{theorem}
\label{theorem:reasonable.dart}
If $X|dX$, where $d>1$, then $g_{_{X,f}}(s)\geq g_{_{X,f}}(ds)$ for all $f$ and for all $s$. 
In particular, if $X$ is selfdecomposable, then $X$ is reasonable. 
\end{theorem}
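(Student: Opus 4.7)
The plan is to unpack the divisibility hypothesis directly. By assumption $X\mid dX$, so there exists $Z$ independent of $X$ with $X+Z \stackrel{d}{=} dX$. Scaling by $s$, this gives the key distributional identity
\[
dsX \stackrel{d}{=} sX + sZ,
\]
where $sZ$ is independent of $sX$. So for any aim point $a$,
\[
Ef(a+dsX) = Ef\bigl(a + sX + sZ\bigr) = E\bigl[\,E[f((a+sZ)+sX)\mid sZ]\,\bigr].
\]

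Next I would bound the inner conditional expectation pointwise. For each realized value $z$ of $sZ$, the quantity $Ef((a+z)+sX)$ is exactly the expected payoff one obtains with dart $X$ at distance $s$ aiming at the point $a+z$. By the definition of $g_{_{X,f}}$ as a supremum over aim points, this is at most $g_{_{X,f}}(s)$, regardless of $z$. Taking expectation over $sZ$ and then supremum over $a$, one gets $g_{_{X,f}}(ds) \leq g_{_{X,f}}(s)$, which is the claimed inequality.

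For the second statement, I would apply the first part iteratively (or just with arbitrary $d>1$): if $X$ is selfdecomposable, then for any $0<s_1<s_2$, setting $d=s_2/s_1>1$ gives $X\mid dX$, hence $g_{_{X,f}}(s_1)\geq g_{_{X,f}}(ds_1)=g_{_{X,f}}(s_2)$. This is exactly the definition of $g_{_{X,f}}$ being decreasing, so $X$ is reasonable against every $f$.

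There is no real obstacle here once the identity $dsX\stackrel{d}{=}sX+sZ$ is in hand; the proof is essentially an application of Fubini/tower property together with the elementary observation that averaging the payoff over a random translation of the aim point cannot do better than the best deterministic aim point. The only mild subtleties are that $f$ need only be bounded from above (so integrals are well-defined with values possibly $-\infty$, which causes no trouble for the inequality direction) and that the supremum in $g_{_{X,f}}(s)$ may not be attained, but this is irrelevant since the bound is a supremum inequality, not an equality of optimizers.
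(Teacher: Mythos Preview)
Your proof is correct and follows essentially the same approach as the paper: the paper also deduces $dsX\stackrel{d}{=}sX+Z'$ for some $Z'$ independent of $X$, and then invokes the inequality $g_{_{X+Y,f}}(d)\le g_{_{X,f}}(d)$ (Proposition~\ref{proposition:basic_g_properties}(2)), whose proof is exactly the conditioning/Fubini argument you wrote out explicitly. The only cosmetic difference is that the paper packages the ``randomized aim cannot beat the best deterministic aim'' step as a separate lemma rather than unpacking it inline.
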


The notion of selfdecomposability, while not as well known as other more standard
notions, was in fact already studied by Lévy, but not under this name. 
(Actually, the present authors came up with this notion in conjunction with this
project before learning that the concept already existed.) 

We mention that it is well known that all stable vectors are selfdecomposable (see 
\cite{sato1999levy}, p. 91) and that an independent sum of selfdecomposable 
random vectors is also selfdecomposable. Later on, we will give a large number of 
known examples of selfdecomposable distributions which are not stable.

While we have already proved that the uniform distribution is not reasonable,
we now study which darts $X$ are such that $(X,\cos(x))$ is reasonable in 1-dimension.
It turns out that by just using the payoff function
$\cos(x)$, we will be able to reveal that a number of distributions
are not reasonable. We will extend the following result to any dimension and also strengthen the statement
in Section \ref{section:cosine}.

\begin{theorem}\label{theorem:cfeasy}
Let $X$ be any dart taking values in $\mathbb{R}$ with characteristic function $\phi_X$.
Then $(X,\cos(x))$ is reasonable if and only if $|\phi_X(t)|$ is decreasing in $t$ on $[0,\infty)$.
\end{theorem}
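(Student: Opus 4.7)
The plan is to compute $g_{X,\cos}(d)$ in closed form and observe that it equals $|\phi_X(d)|$, at which point the equivalence in the theorem becomes a tautology. This reduces the whole statement to one short trigonometric calculation and a maximization over the aim parameter $a$.

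First, I would expand $\cos(a+dX)=\cos(a)\cos(dX)-\sin(a)\sin(dX)$ and take expectations (justified because $|\cos|,|\sin|\le 1$, so everything is integrable), obtaining
\begin{equation*}
E\cos(a+dX)=\cos(a)\,R(d)-\sin(a)\,I(d),
\end{equation*}
where $R(d):=E\cos(dX)$ and $I(d):=E\sin(dX)$ are the real and imaginary parts of $\phi_X(d)$. Equivalently, $E\cos(a+dX)=\operatorname{Re}\br{e^{ia}\phi_X(d)}$.

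Second, I would maximize over $a\in\R$. If $\phi_X(d)\ne 0$, write $\phi_X(d)=|\phi_X(d)|e^{i\theta(d)}$; then $\operatorname{Re}\br{e^{ia}\phi_X(d)}=|\phi_X(d)|\cos(a+\theta(d))$, whose supremum over $a$ is exactly $|\phi_X(d)|$, attained at $a=-\theta(d)$. If $\phi_X(d)=0$, the expression is identically $0$ in $a$, and the sup is $0=|\phi_X(d)|$ as well. Therefore
\begin{equation*}
g_{_{X,\cos}}(d)=|\phi_X(d)|\qquad\text{for all }d>0.
\end{equation*}

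Third, since $(X,\cos(x))$ is reasonable by definition iff $g_{_{X,\cos}}$ is decreasing on $(0,\infty)$, and since $|\phi_X(0)|=1\ge|\phi_X(d)|$ for every $d$, this is the same as the condition that $|\phi_X(t)|$ be decreasing on $[0,\infty)$. So the theorem follows at once. There is no real obstacle here; the only thing one needs to be a bit careful about is handling the case $\phi_X(d)=0$ when defining the phase $\theta(d)$, but as noted this case is trivial.
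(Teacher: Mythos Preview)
Your proof is correct and follows essentially the same approach as the paper: compute $E\cos(a+dX)=\operatorname{Re}\bigl(e^{ia}\phi_X(d)\bigr)$, maximize over $a$ to obtain $g_{X,\cos}(d)=|\phi_X(d)|$, and read off the equivalence. If anything, your treatment is slightly more careful in explicitly handling the case $\phi_X(d)=0$, which the paper's use of $\mathrm{Arg}$ leaves implicit.
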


\begin{corollary}\label{corollary:cf}
If $X$ is a 1-dimensional dart, whose characteristic function is analytic
and has a (real) zero, then $(X,\cos x)$ is not reasonable.
\end{corollary}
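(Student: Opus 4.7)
My plan is to deduce this directly from Theorem \ref{theorem:cfeasy} by ruling out monotonicity of $|\phi_X(t)|$ on $[0,\infty)$, using the fact that isolated zeros of nontrivial analytic functions produce unavoidable local increases in the modulus.

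First I would reduce to the problem of showing that $|\phi_X|$ is not decreasing on $[0,\infty)$: by Theorem \ref{theorem:cfeasy}, this is equivalent to $(X,\cos x)$ being unreasonable, so the whole task becomes a statement about $\phi_X$ on the positive real axis. Next, I would locate a zero in $(0,\infty)$. By hypothesis there is some real $t_0$ with $\phi_X(t_0)=0$. Since $\phi_X(0)=1\neq 0$, we have $t_0\neq 0$, and by the standard symmetry $\phi_X(-t)=\overline{\phi_X(t)}$ we may assume $t_0>0$.

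The core of the argument is then isolation of the zero: because $\phi_X$ is analytic and is not identically zero (again since $\phi_X(0)=1$), its zero set in $\mathbb{R}$ is discrete. In particular there exists $\delta>0$ such that $\phi_X(t)\neq 0$ for all $t\in(t_0,t_0+\delta)$. For such $t$ we have
\[
|\phi_X(t)| > 0 = |\phi_X(t_0)|,
\]
so $|\phi_X|$ is strictly larger just to the right of $t_0$ than at $t_0$ itself, contradicting the possibility of $|\phi_X|$ being decreasing on $[0,\infty)$. Invoking Theorem \ref{theorem:cfeasy} one more time gives that $(X,\cos x)$ is not reasonable.

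There is essentially no obstacle here — the only thing worth being careful about is making sure the zero is strictly positive (handled by $\phi_X(0)=1$) and that analyticity is genuinely used to guarantee the zero is isolated, rather than a possible accumulation point from the right. Everything else is just reading off the contrapositive of the monotonicity criterion in Theorem \ref{theorem:cfeasy}.
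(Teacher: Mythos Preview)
Your argument is correct and is precisely the intended derivation: the paper states the result as an immediate corollary of Theorem~\ref{theorem:cfeasy}, and the only substance is exactly what you supply—analyticity together with $\phi_X(0)=1$ forces the real zero to be isolated, so $|\phi_X|$ strictly increases just to the right of it and cannot be decreasing on $[0,\infty)$.
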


\begin{remark}
Analyticity in Corollary \ref{corollary:cf} is necessary since there is a symmetric dart 
whose characteristic function has a zero but such that $(X,\cos x)$ is reasonable. Namely,
it is known (see \cite{durrett2019probability}, p. 131) that if $X$ has density function
$\frac{1-\cos x}{\pi x^2}$, then its characteristic function is given by the tent function 
$\max\{1-|t|,0\}$. By Theorem \ref{theorem:cfeasy}, $(X,\cos x)$ is reasonable.
\end{remark}

Theorem \ref{theorem:cfeasy} gives us a powerful tool to study the behavior of $g_{_{X,f}}(d)$, and we can immediately find several examples of common distributions that are not reasonable with respect to 
$\cos(x)$. For example, the following distributions all have characteristic functions $\phi_X$ such that $|\phi_X(d)|$ is not 
decreasing for $d\in(0,\infty)$, and thus by Theorem \ref{theorem:cfeasy} 
are not reasonable with respect to 
$\cos(x)$:  Binomial distribution, Negative binomial distribution, Poisson distribution, Uniform distribution and Geometric distribution.  

We give two further examples of absolutely continuous distributions which are
not reasonable with respect to $\cos(x)$. These are the semi-circle distribution whose
probability density function on $[-1,1]$  is  \[\frac2\pi \cdot \sqrt{1-x^2}\]  
and the arcsine law whose probability density function on $[-1,1]$ is
\[\frac1{\pi\cdot \sqrt{(1+x)(1-x)}}. \]

For the semi-circle distribution, it is not hard to verify that the characteristic function is
positive at 3 and negative at 4 and hence must have a zero in between. By Corollary 
\ref{corollary:cf}, we conclude that the distribution function is not reasonable against $\cos(x)$.
It is interesting to also point out that this characteristic function is equal to 
$ \frac{2J_1(d)}{d}$  where $J_1$ is the so-called Bessel function of the first kind of order 1 
which is known to have its first zero at $\approx 3.8317$.

For the arcsine distribution,  the characteristic function is 
$J_0(d)$  where $J_0$ is the so-called Bessel function of the first kind of order 0 which is known
to have its first zero at $\approx 2.4$.
Therefore Corollary \ref{corollary:cf} again implies it is not reasonable against $\cos(x)$.

We note that the uniform distribution, the semi-circle distribution
and the arcsine distribution are all special cases of the symmetric Beta distribution. The characteristic
function of a Beta distribution is something which is called a confluent hypergeometric function.
It seems that the literature on confluent hypergeometric functions 
could supply answers to what happens for the general (symmetric) Beta distribution (with regard to
being reasonable against $\cos(x)$) but we have chosen not to investigate this. (In any case, none of the Beta distributions
will be reasonable by Theorem \ref{theorem:compactDart} since they are compactly supported.)

Finally, concerning the semi-circle distribution, since this is the projection onto the $x$-axis of the uniform distribution on the disc in 
$\mathbb{R}^2$, it follows from Proposition \ref{proposition:projections} that the latter distribution is not reasonable against $f(x,y)=\cos(x)$.
We will in fact see later on that this latter distribution is also not reasonable against the standard dart board.

We will now give two applications of Theorem \ref{theorem:cfeasy} which will also be proved in Section \ref{section:cosine}.

\begin{theorem}\label{theorem:2pointmasses}
If $X$ is a random variable with at least two point masses, then
$(X,\cos(x))$ is not reasonable. Moreover, if 
$X$ is a random vector with at least two point masses, then, for some $j$,
$(X,\cos(\pi_j(x)))$ is not reasonable where 
 $\pi_j$ is the projection onto the $j$th coordinate.
\end{theorem}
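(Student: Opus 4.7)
My plan is to reduce the statement to a claim about the modulus of the characteristic function and then derive a contradiction by computing a Cesaro average in two different ways. By Theorem \ref{theorem:cfeasy}, $(X,\cos(x))$ is reasonable if and only if $|\phi_X(t)|$ is decreasing on $[0,\infty)$, so for the one-dimensional part it suffices to show: if $X$ has point masses at distinct real numbers $a,b$ with probabilities $p,q>0$, then $|\phi_X|$ is not monotonically decreasing. Put $\delta := b-a$.

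Assume, for contradiction, that $|\phi_X(t)|^2$ is monotonically decreasing on $[0,\infty)$. Being bounded and monotone, it has a limit $L^2$. By Wiener's theorem, $\frac{1}{T}\int_0^T |\phi_X(t)|^2\,dt \to \sum_i q_i^2$, where the sum runs over all atoms $q_i$ of $X$. The Cesaro average of a convergent monotone function equals its pointwise limit, so $L^2 = \sum_i q_i^2$, and the function $h(t) := |\phi_X(t)|^2 - L^2$ is non-negative and decreases monotonically to $0$.

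Next I would compute the oscillating average
\[
I(T) := \frac{1}{T}\int_0^T |\phi_X(t)|^2 \cos(\delta t)\,dt
\]
in two ways. Writing $|\phi_X(t)|^2 = E\bigl[e^{it(X-X')}\bigr]$ for an independent copy $X'$ of $X$, expanding $2\cos(\delta t) = e^{i\delta t} + e^{-i\delta t}$, swapping integrals, and applying $\frac{1}{T}\int_0^T e^{itc}\,dt \to \mathbf{1}_{c=0}$ together with bounded convergence, I obtain
\[
\lim_{T\to\infty} I(T) = \tfrac{1}{2}\br{P(X-X'=\delta) + P(X-X'=-\delta)} \geq pq > 0,
\]
the lower bound coming from $P(X=b)P(X'=a) = P(X=a)P(X'=b) = pq$. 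On the other hand, using $|\phi_X|^2 = L^2 + h$ one has $I(T) = L^2 \cdot \frac{\sin(\delta T)}{\delta T} + \frac{1}{T}\int_0^T h(t)\cos(\delta t)\,dt$; the first term tends to $0$, while the second is dominated in absolute value by $\frac{1}{T}\int_0^T h(t)\,dt$, which also tends to $0$ since $h$ is bounded with $h(t)\to 0$. So $I(T)\to 0$, contradicting the previous display.

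For the multidimensional statement, given two distinct point masses $x,y$ of $X$, choose a coordinate $j$ with $\pi_j(x) \neq \pi_j(y)$; then $X_j := \pi_j(X)$ has at least two distinct point masses. Since $E\cos(a_j + dX_j)$ depends only on the coordinate $a_j$ of $a$, we have $\g{X,f}(d) = \g{X_j,\cos}(d)$ where $f(x)=\cos(\pi_j(x))$, and the one-dimensional case applies. I expect the main subtlety to be the Wiener step: using the monotonicity hypothesis not merely to secure the existence of a limit for $|\phi_X(t)|^2$, but to pin down its numerical value as $\sum_i q_i^2$. Once that identification is in hand, matching it against the oscillating $\cos(\delta t)$ weight cleanly isolates the atomic cross-correlation at $\pm\delta$ and produces the contradiction immediately.
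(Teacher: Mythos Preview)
Your proof is correct and takes a genuinely different route from the paper's argument. The paper decomposes $\mu_X = p\mu_d + (1-p)\mu_c$ into its discrete and continuous parts, invokes the theory of almost periodic functions to show that $|\phi_d|$ returns arbitrarily close to $1$ on every sufficiently long interval, and then uses the Cesaro vanishing of $|\phi_c|^2$ (together with uniform continuity) to locate such an interval far out where $|\phi_c|$ is uniformly small; this yields explicit points $t_0+\tau < \tau'$ with $|\phi_X(t_0+\tau)| < |\phi_X(\tau')|$. Your argument bypasses both the discrete/continuous decomposition and almost periodicity entirely: you assume monotonicity, use Wiener's theorem to identify the pointwise limit $L^2$ of $|\phi_X|^2$ with $\sum_i q_i^2$, and then compute the $\cos(\delta t)$-weighted Cesaro average of $|\phi_X|^2$ in two ways --- once via $|\phi_X(t)|^2 = E[e^{it(X-X')}]$ to extract the atomic cross-term $\ge pq>0$, and once via the decomposition $L^2+h(t)$ to force it to $0$.

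What each approach buys: the paper's argument is more constructive --- it actually exhibits a pair of points witnessing non-monotonicity --- and the almost-periodicity viewpoint gives structural insight into why the discrete part's characteristic function keeps recurring near $1$. Your argument is shorter and more self-contained (Wiener's theorem is essentially the same Fubini computation you already perform for $I(T)$), avoids any external input on almost periodic functions, and packages the contradiction cleanly in a single averaged identity. The multidimensional reduction via a coordinate projection is handled the same way in both.
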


\begin{proposition} \label{proposition:cfnotToZero}
Let $X$ be a continuous dart (i.e., no point masses) taking values in $\mathbb{R}$, with 
characteristic 
function $\phi_X$. If $\phi_X(t)$ does not go to zero as $t\to\infty$,
then $(X,\cos(x))$ is not reasonable. (There is also some version of this in higher dimensions.)
\end{proposition}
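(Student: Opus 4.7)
By Theorem \ref{theorem:cfeasy}, $(X,\cos(x))$ is reasonable iff $|\phi_X(t)|$ is decreasing on $[0,\infty)$. So the plan is to show the contrapositive: if $|\phi_X|$ is decreasing on $[0,\infty)$ and $X$ has no atoms, then $\phi_X(t)\to 0$ as $t\to\infty$.

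The key tool I would use is Wiener's lemma on the time-average of the squared characteristic function: for any probability measure with characteristic function $\phi_X$,
\[
\lim_{T\to\infty}\frac{1}{2T}\int_{-T}^{T}|\phi_X(t)|^2\,dt \;=\; \sum_{x\in\mathbb{R}} P(X=x)^2.
\]
This comes from writing $|\phi_X(t)|^2$ as the characteristic function of $X-X'$ (with $X'$ an independent copy) and evaluating the Dirichlet-type average $\tfrac{1}{2T}\int_{-T}^T e^{itz}\,dt = \tfrac{\sin Tz}{Tz}$, which under dominated convergence picks out the atomic mass at $0$ of the law of $X-X'$, namely $\sum_x P(X=x)^2$. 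Since $X$ has no atoms by hypothesis, this sum is $0$.

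Now assume $|\phi_X|$ is decreasing on $[0,\infty)$. Then $L:=\lim_{t\to\infty}|\phi_X(t)|$ exists and is non-negative. Since $|\phi_X|$ is even, $|\phi_X(t)|^2\to L^2$ as $|t|\to\infty$, hence the Cesàro average above converges to $L^2$. Combining with the Wiener identity gives $L^2=0$, i.e.\ $\phi_X(t)\to 0$, contradicting the hypothesis. This proves the proposition.

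I would not expect any serious obstacle here beyond recalling Wiener's lemma: note that we cannot simply invoke Riemann–Lebesgue, because ``continuous'' in the statement only rules out atoms and allows for singular continuous distributions. For the higher-dimensional version alluded to in parentheses, the same argument runs through verbatim for the payoff $\cos(\langle u,x\rangle)$ after projecting $X$ onto the line spanned by $u$: one applies Theorem \ref{theorem:cfeasy} (in its $n$-dimensional strengthening from Section \ref{section:cosine}) to $\langle u,X\rangle$, and needs only that $\langle u,X\rangle$ has no atoms for a suitable direction $u$, which follows from $X$ having no atoms in $\mathbb{R}^n$.
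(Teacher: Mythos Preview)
Your proof is correct and follows essentially the same route as the paper: argue by contrapositive via Theorem~\ref{theorem:cfeasy}, invoke the Wiener-type result that the Ces\`aro mean of $|\phi_X|^2$ vanishes for atomless distributions (the paper cites this from Chung), and deduce that a decreasing $|\phi_X|$ must tend to zero. Your added remark that Riemann--Lebesgue is insufficient here because of possible singular continuous components is exactly the point, and is consistent with the paper's surrounding discussion.
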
 

Note that by the Riemann-Lebesgue Lemma, any dart satisfying this assumption necessarily has
a nontrivial continuous singular component. However,
(see \cite{lyons1995seventy}) a continuous singular distribution may in fact have $\phi_X(t)$  
going to zero as $t\to\infty$. Such measures are called Rajchman measures and the first
example was constructed by Menshov (see \cite{menshov}).

While the payoff function $\cos(x)$ will in a number of cases reveal nonreasonableness,
it will not always succeed; i.e.\ there are nonreasonable darts $X$ so that 
$(X,\cos x)$ is reasonable.

For example, it is easy to check using Theorem \ref{theorem:cfeasy}
that a convex combination of a point mass at 0 and a normal
distribution is reasonable against cosine, but it will follow from 
Theorem \ref{theorem:aNonTrivialPointmass} below that it is not reasonable.

More interestingly, there are also absolutely continuous distributions with this property. 
In \cite{tlas2020bump}, a function $f\in C^\infty(\mathbb{R})$ which is real, 
nonnegative, symmetric, supported on $[-1,1]$, not identically equal to zero and 
such that its (real-valued) Fourier transform $\hat{f}(t)$ is monotone decreasing for $t\geq 0$
(and hence nonnegative) is constructed. After possibly rescaling, any such $f$ is the probability 
density function of some absolutely continuous random variable, which by Theorem 
$\ref{theorem:cfeasy}$ is reasonable with respect to $\cos(x)$. However it
is not reasonable according to Theorem \ref{theorem:compactDart} below. 

A third such example is covered by the next result which will also be proved in Section \ref{section:cosine}.
We know from either Theorem \ref{theorem:cfeasy} or Theorem \ref{theorem:2pointmasses} 
that a Bern($p$) distributed random variable with distribution $p\delta_1+ (1-p)\delta_0$
is not reasonable with respect to $\cos(x)$. One can ask if one could "smooth out" this distribution
so that it becomes reasonable with respect to $\cos(x)$. The following result yields a phase
transition where the answer to the question depends on both the parameter $p$ and the
degree of "smoothing out".  Note that it also yields
two darts $X,Y$ taking values in $\mathbb{R}$ such that $(X,\cos(x))$ is reasonable, 
$(Y,\cos(x))$ is not reasonable, but the independent sum of $X$ and $Y$ is such that 
$(X+Y,\cos(x))$ is reasonable.

\begin{theorem}
\label{theorem:reasPlusNonreasCosine}
Let $X_1$ be Bern($p$) distributed and $X_2$ be N(0,$\sigma^2$) distributed. If they are independent, then $X:=X_1+X_2$ is reasonable with respect to $\cos (x)$ if and only if
\begin{equation}
    \sigma^2d\Big(p^2+(1-p)^2+2(1-p)p\cos(d)\Big)+(1-p)p\sin(d)\geq 0,\ \forall d\geq0.
\end{equation}
 This implies that for $p=1/2$, $(X,\cos(x))$ is not reasonable for any $\sigma$, and that
 for any $p\neq 1/2$ there exists $\sigma_p \in (0,\infty)$ such that for all $\sigma\geq \sigma_p$, $(X,\cos(x))$ is reasonable, and for any $\sigma<\sigma_p$, $(X,\cos(x))$ is not reasonable. In addition, 
 for $p\neq 1/2$, $\sigma_p\leq (1-p)p/(\pi|1-2p|^2)$.
 
Finally for all $\sigma>0$  and $p\in (0,1)$,  $X$ is not reasonable. 
\end{theorem}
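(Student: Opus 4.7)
For the iff, the plan is to apply Theorem~\ref{theorem:cfeasy}: $(X,\cos(x))$ is reasonable precisely when $|\phi_X(t)|$ is decreasing on $[0,\infty)$. By independence,
\[
\phi_X(t)=\bigl[(1-p)+pe^{it}\bigr]e^{-\sigma^2 t^2/2},
\qquad
|\phi_X(t)|^2=\bigl[(1-p)^2+p^2+2p(1-p)\cos t\bigr]e^{-\sigma^2 t^2}.
\]
Differentiating, demanding $(|\phi_X|^2)'\leq 0$ on $[0,\infty)$, and dividing through by the positive quantity $-2e^{-\sigma^2 t^2}$ produces exactly the stated inequality.

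To extract the phase diagram, I would handle $p=1/2$ first: the inequality becomes $\tfrac{\sigma^2 d}{2}(1+\cos d)+\tfrac{1}{4}\sin d\geq 0$, and a Taylor expansion at $d=\pi^+$ gives $1+\cos d=O((d-\pi)^2)$ while $\sin d=-(d-\pi)+O((d-\pi)^3)$, so the negative $\sin$ term dominates regardless of $\sigma$. For $p\neq 1/2$, I would exploit the identity
\[
(1-p)^2+p^2+2p(1-p)\cos d=1-4p(1-p)\sin^2(d/2)\geq (1-2p)^2>0,
\]
so the first summand is at least $\sigma^2 d(1-2p)^2$, while the second has magnitude at most $p(1-p)$ on $[\pi,\infty)$ (and is nonnegative on $[0,\pi]$, where the inequality is automatic). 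A routine majorization then delivers the stated upper bound on $\sigma_p$. In the opposite direction, fixing $d\in(\pi,2\pi)$ and letting $\sigma\downarrow 0$ kills the first term while leaving $p(1-p)\sin d<0$, so the inequality fails. Existence of the sharp threshold $\sigma_p\in(0,\infty)$ follows because the coefficient of $\sigma^2$, namely $d[1-4p(1-p)\sin^2(d/2)]$, is nonnegative, so the set of admissible $\sigma$ is closed upward; continuity in $\sigma$ then ensures closedness, giving $[\sigma_p,\infty)$.

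The main obstacle is the final claim: for every $p\in(0,1)$ and $\sigma>0$, $X$ is not reasonable against \emph{some} payoff function, even though it may well be reasonable against $\cos(x)$. Since $X$ has no point masses and has unbounded support, neither Theorem~\ref{theorem:2pointmasses} nor the compact-support result applies directly. The plan is to invoke the general principle promised in the abstract, that a dart whose Fourier transform admits a complex zero is not reasonable. Here $\phi_{X_1}(z)=(1-p)+pe^{iz}$ extends to an entire function with zeros at $z=(2k+1)\pi+i\log(p/(1-p))$ for $k\in\mathbb{Z}$, while $\phi_{X_2}(z)=e^{-\sigma^2 z^2/2}$ is entire and nowhere vanishing, so $\phi_X$ (entire because $X$ has sub-Gaussian tails) inherits these complex zeros. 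Applying the complex-zero theorem as a black box then produces a bounded continuous payoff function against which $X$ fails to be reasonable.
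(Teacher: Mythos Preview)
Your proposal is correct and follows the same route as the paper: compute $|\phi_X|^2$, differentiate, bound the bracket below by $(1-2p)^2$ for the sufficient condition on $\sigma$, and invoke Theorem~\ref{theorem:entire} for the final claim via the complex zeros of $(1-p)+pe^{iz}$ (which $\phi_X$, being entire, inherits). One small difference in execution: for $p=1/2$ the paper simply notes $|\phi_X(\pi)|=0$ while $|\phi_X|$ is not identically zero past $\pi$, avoiding your Taylor expansion; conversely, you spell out the threshold argument (the inequality is monotone in $\sigma^2$ so the admissible set is upward-closed, and it fails at any $d\in(\pi,2\pi)$ for small $\sigma$) that the paper's proof leaves to the reader.
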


Our next four theorems identify further classes of nonreasonable darts.
The first two of these results will be proved in Section \ref{section:compactsupport}.

\begin{theorem}
\label{theorem:compactDart}
If $X$ is a nondegenerate dart in $\mathbb{R}$ which is compactly supported, then
$X$ is not reasonable with respect to some nonnegative 
continuous payoff function with compact support.
Furthermore, if $X$ is a dart in $\mathbb{R}^n$ for which some projection 
is nondegenerate and compactly supported, then $X$ is not reasonable with respect to some 
nonnegative continuous payoff  function with compact support.
\end{theorem}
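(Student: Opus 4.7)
The plan is to construct, given any nondegenerate compactly supported dart $X$ in $\R$, a nonnegative continuous payoff $f$ with compact support such that $\g{X,f}(d)$ fails to be monotone decreasing. The first step is to observe that since $X$ is supported in some interval $[-M,M]$, the characteristic function $\phi_X$ extends to an entire function of exponential type at most $M$. If $\phi_X$ had no zeros in $\mathbb{C}$, then Hadamard's factorization theorem (applied to an entire function of order at most $1$) would force $\phi_X(z)=e^{P(z)}$ for a polynomial $P$ of degree at most $1$; combined with $\phi_X(0)=1$ and $|\phi_X(t)|\leq 1$ on $\R$, this reduces to $\phi_X(z)=e^{icz}$, making $X$ a point mass, a contradiction. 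Hence $\phi_X$ has a zero $z_0=a+bi\in\mathbb{C}$.

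Using this zero, I construct a payoff adapted to it. With $\psi$ a smooth nonnegative bump supported in $[-2,2]$ and equal to $1$ on $[-1,1]$, set
\[
    f(x)=(1+\cos(ax))\,e^{-bx}\,\psi(x/R),
\]
where $R$ is a large parameter. Clearly $f$ is continuous, nonnegative, and compactly supported. The central computation is that, whenever $\alpha+dX$ lies in $[-R,R]$ almost surely, the cutoff is constantly $1$ and
\[
    \E f(\alpha+dX) = e^{-b\alpha}\bigl[M_X(-bd)+|\phi_X(dz_0)|\cos(a\alpha+\arg\phi_X(dz_0))\bigr],
\]
where $M_X$ denotes the moment generating function of $X$. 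At $d=1$, $\phi_X(z_0)=0$ kills the oscillating term; at $d=1+\eps$ the isolated-zero property of entire functions gives $|\phi_X((1+\eps)z_0)|\sim c\,\eps>0$, so by tuning $\alpha$ near the lower end of the admissible range with the appropriate phase, a linear-in-$\eps$ positive contribution appears. First shifting $X$ (which leaves $\g{X,f}$ unchanged, since the shift can be absorbed into the aim point) ensures the accompanying $O(\eps)$ change in $M_X(-bd)$ cannot swamp this gain, yielding $\g{X,f}(1+\eps)>\g{X,f}(1)$ and hence non-reasonableness.

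The main obstacle lies in ruling out ``boundary'' maximizers of $\E f(\alpha+dX)$: for $\alpha$ outside the interior range, the cutoff is only partially active and the growth of $e^{-bx}$ for $x$ negative could in principle create contributions that exceed what the clean Fourier identity predicts. I would handle this either by taking $R$ large enough that the interior gain dominates, or by trading $f$ for the variant $(1+e^{-bx}\cos(ax))\chi(x)$ smoothly cut off to a compact subset of $[0,\infty)$; the latter is uniformly bounded by $2$ on its support, the identical calculation yields $\g{X,f}(1)=1$, and $\g{X,f}(1+\eps)>1$ is secured by the argument above. The case of a real zero $b=0$ simplifies since the exponential factor is trivial and no boundary issue arises.

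The $n$-dimensional version reduces to the one-dimensional case by projection: given that $\pi_j(X)$ is nondegenerate and compactly supported, pick a one-dimensional payoff $f_1$ against which it fails to be reasonable, and set $f(x)=f_1(\pi_j(x))\,\rho(x-\pi_j(x)e_j)$ for a nonnegative continuous compactly supported bump $\rho$ on $\R^{n-1}$ with $\rho(0)>0$. Taking aim points with the perpendicular coordinates frozen at an optimum of $\rho$ decouples the expectation into the product of the one-dimensional expectation and a positive factor whose dependence on $d$ is mild enough that the non-monotonicity in $d$ is inherited by $\g{X,f}$.
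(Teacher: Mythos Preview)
Your core idea---find a complex zero $z_0$ of the entire function $\phi_X$ and exploit $\phi_X(z_0)=0$ versus $\phi_X(dz_0)\neq 0$---is exactly the paper's idea. The differences lie in how the payoff is built, and there your argument has two genuine gaps.

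\textbf{The one-dimensional construction.} By multiplying in the cutoff $\psi(x/R)$ (or restricting to $[0,\infty)$) \emph{before} you know where the supremum in $\g{X,f}(d)$ is achieved, you create a boundary problem that you never actually dispose of. For your second variant $f(x)=(1+e^{-bx}\cos(ax))\chi(x)$, the claim $\g{X,f}(1)=1$ is not justified: the function takes values close to $2$ near $x=0$, and when $\alpha+X$ straddles the left edge of the support it is entirely possible that the truncated expectation exceeds $1$ (or, in the first variant, that the growing exponential factor pushes the boundary contribution above anything you can achieve in the interior). Your ``shift $X$'' trick addresses the $M_X$ term but not this. The paper avoids the issue altogether by working first with the \emph{unbounded} function $f(x)=e^{cx}\cos(\omega x)=\mathrm{Re}(e^{iz_0x})$; because $X$ is compactly supported this is integrable, and one gets the exact identity $Ef(a+X)=0$ for \emph{every} $a\in\R$, with $Ef(a_0+d_0X)>0$ for suitable $d_0>1$. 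Only then is $f$ truncated, and the truncation is done outside the image of $a_0+d_0X$ so that the strict inequality survives; elsewhere the truncated function is made $\le f$ (on a bounded set) or $\le 0$ (far out), so the zero bound at $d=1$ also survives.

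\textbf{The $n$-dimensional reduction.} Your decoupling claim is false in general: $\pi_j(X)$ and the orthogonal component $X_\perp$ need not be independent, so $\E[f_1(a_j+d\pi_j(X))\rho(a_\perp+dX_\perp)]$ does not factor. Moreover, the theorem only assumes the \emph{one} projection is compactly supported, so $X_\perp$ may be unbounded, and then for any compactly supported $\rho$ the factor $\rho(a_\perp+dX_\perp)$ depends on $d$ in an uncontrolled way. The clean route is the paper's: from a one-dimensional continuous bounded $f_1$ with $(\pi_j(X),f_1)$ not reasonable, the identity $g_{\pi_j(X),f_1}(d)=g_{X,f_1\circ\pi_j}(d)$ shows $(X,f_1\circ\pi_j)$ is not reasonable; then a general truncation lemma (their Proposition~3.1) converts the bounded continuous $f_1\circ\pi_j$ into a nonnegative continuous compactly supported payoff, preserving non-reasonableness.
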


The proof of this result will in fact prove the following result.

\begin{theorem}
\label{theorem:entire}
If $X$ is a nondegenerate dart in $\mathbb{R}$ whose Fourier transform in the
complex plane is entire and contains a zero, then $X$ is not reasonable against some 
continuous payoff function. Moreover, this is still true if the characteristic function is analytic at 0
and has a zero in its strip of regularity.
\end{theorem}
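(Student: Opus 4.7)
The plan is to mimic the proof of Theorem~\ref{theorem:cfeasy} using a complex-frequency analogue of $\cos$, combined with a sub-quadratic correction that restores boundedness from above. Write $z_0 = \alpha+i\beta$ for the given zero. The case $\beta = 0$ is covered by Corollary~\ref{corollary:cf}, and after replacing $X$ by $-X$ if necessary we may take $\beta > 0$; then $\alpha \neq 0$, since $\alpha = 0$ would give $\phi_X(z_0) = \phi_X(i\beta) = E[e^{-\beta X}] > 0$, contradicting $\phi_X(z_0) = 0$. Let $r$ be the radius of the strip of regularity of $\phi_X$, fix $\rho \in (1, r/\beta)$, and set
\[
f(x) \;:=\; e^{-\beta x}\cos(\alpha x) \;-\; \tfrac{1}{2}e^{-\rho\beta x}.
\]
Writing $u := e^{-\beta x} > 0$, we have $f(x) = u\cos(\alpha x) - u^\rho/2 \leq u - u^\rho/2$, whose supremum over $u > 0$ is finite, so $f$ is continuous and bounded above, hence a legitimate payoff function. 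A direct calculation gives, for every $d$ such that $dz_0$ and $i\rho\beta d$ both lie in the strip,
\[
E[f(a+dX)] \;=\; e^{-\beta a}\,\mathrm{Re}\br{e^{i\alpha a}\phi_X(dz_0)} \;-\; \tfrac{1}{2}e^{-\rho\beta a}\phi_X(i\rho\beta d).
\]

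At $d = 1$ the first term vanishes (by $\phi_X(z_0) = 0$), so $E[f(a+X)] = -\tfrac{1}{2}e^{-\rho\beta a}\phi_X(i\rho\beta) \leq 0$ for every $a$ and tends up to $0$ as $a \to +\infty$; hence $g_{X,f}(1) = 0$. Now pick $d_0 > 1$ close enough to $1$ that $d_0 z_0$ and $i\rho\beta d_0$ still lie in the strip and $\phi_X(d_0 z_0) \neq 0$, which is possible since $r/\beta > 1$ and since the zeros of the analytic map $d \mapsto \phi_X(dz_0)$ are isolated. With $r' := |\phi_X(d_0 z_0)| > 0$, $M := \phi_X(i\rho\beta d_0) > 0$, and $\theta := \arg\phi_X(d_0 z_0)$, I would restrict $a$ to the arithmetic progression $a_k := (2\pi k - \theta)/\alpha$, $k \in \mathbb{Z}$, on which $\mathrm{Re}(e^{i\alpha a_k}\phi_X(d_0 z_0)) = r'$, and then
\[
E[f(a_k + d_0 X)] \;=\; u_k r' \;-\; \tfrac{1}{2}u_k^\rho M,
\]
with $u_k := e^{-\beta a_k}$ tracing a geometric progression of common ratio $q := e^{-2\pi\beta/\alpha} \in (0,1)$.

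The unconstrained maximum of $u \mapsto u r' - u^\rho M/2$ over $u > 0$ is attained at $u^* := (2r'/(\rho M))^{1/(\rho-1)}$, with positive value $u^* r'(1 - 1/\rho)$. Choosing $k^*$ so that $u_{k^*}$ is the largest element of the progression not exceeding $u^*$ gives $u_{k^*} \in (qu^*, u^*]$; the bound $u_{k^*} \leq u^*$ (with $\rho > 1$) yields $u_{k^*}^\rho \leq u_{k^*}(u^*)^{\rho-1} = u_{k^*} \cdot 2r'/(\rho M)$, so $u_{k^*}^\rho M/2 \leq u_{k^*}r'/\rho$ and hence
\[
E[f(a_{k^*} + d_0 X)] \;\geq\; u_{k^*}r'\br{1 - \tfrac{1}{\rho}} \;\geq\; q\,u^* r'\br{1 - \tfrac{1}{\rho}} \;>\; 0 \;=\; g_{X,f}(1),
\]
exhibiting non-monotonicity of $g_{X,f}$. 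The main obstacle is really bookkeeping around the strip of regularity: one must choose $\rho \in (1, r/\beta)$ and $d_0$ close enough to $1$ that every characteristic-function value appearing in the formula for $E[f(a+dX)]$ stays in the strip, and one must choose $k^*$ as the \emph{left} neighbour (rather than the nearest point) in the geometric progression, which is what keeps the sub-quadratic correction $u_{k^*}^\rho M/2$ dominated by the linear term $u_{k^*} r'/\rho$ uniformly in the ratio $\beta/\alpha$. No limiting or truncation argument is needed.
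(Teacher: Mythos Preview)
Your proof is correct and shares the central idea with the paper's argument---use a zero $z_0$ of $\phi_X$ to build a ``complex cosine'' payoff $\mathrm{Re}(e^{iz_0 x})$ whose expectation vanishes identically at $d=1$ but not at nearby $d_0>1$---yet you handle the boundedness issue differently. The paper works with the unbounded function $f(x)=e^{cx}\cos(\omega x)$ directly, shows $\sup_a Ef(a+X)=0<\sup_a Ef(a+d_0X)$, and then invokes Proposition~\ref{proposition:bded.from.above} (a truncation lemma: cap $f$ at height $M$ and let $M\to\infty$) to produce a genuine payoff function. You instead subtract a faster-growing exponential $\tfrac12 e^{-\rho\beta x}$ with $\rho\in(1,r/\beta)$, which tames the growth of $u\cos(\alpha x)$ by the super-linear term $u^\rho/2$ and gives a payoff function that is bounded above from the outset; the optimization over the geometric progression $\{u_k\}$ then replaces the limiting step. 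Your route is self-contained and yields an explicit witness; the paper's route is shorter because the truncation is packaged in a general lemma used elsewhere.

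Two small points of polish. First, your claim $q=e^{-2\pi\beta/\alpha}\in(0,1)$ tacitly assumes $\alpha>0$; this is harmless because $\phi_X(-\bar z_0)=\overline{\phi_X(z_0)}=0$ with $-\bar z_0=-\alpha+i\beta$, so one may arrange $\alpha>0$ at the start (or simply write $q=e^{-2\pi\beta/|\alpha|}$, since the two-sided progression $\{u_k:k\in\mathbb{Z}\}$ is the same set either way). Second, ``radius of the strip of regularity'' is ambiguous when the strip is asymmetric about the real axis; what you actually need is the upper boundary $r=\sup\{\mathrm{Im}\,z:z\text{ in the strip}\}$, which indeed satisfies $r>\beta$ since $z_0$ lies in the open strip. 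Neither point affects the validity of the argument.
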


An illustrative example here are the family of probability density functions 
on $\mathbb{R}$ indexed by $\alpha>1$ given by 
$$
f_\alpha(x):= C_\alpha e^{-|x|^\alpha};
$$
the Fourier transforms of all of these are clearly entire since $\alpha>1$.
For $\alpha=2$, we have the normal density which is stable and hence selfdecomposable and therefore reasonable. However, for $\alpha\neq 2$,  P\'{o}lya (\cite{polya}) showed
that the Fourier transform has zeroes in the complex plane and hence 
Theorem \ref{theorem:entire} tells us that they are not reasonable.

Another illustrative example is $X=|Z|$ where $Z$ is a standard normal random variable. The Fourier transform of $X$ is (essentially)
the Mittag-Leffler function $E_{1/2}(z)$ which is known to have zeroes and hence
Theorem \ref{theorem:entire} tells us that $X$ is not reasonable. It is interesting to contrast this with 
$Y=|C|$ where $C$ is a standard Cauchy random variable which is known to be selfdecomposable 
and therefore reasonable. 

The next two results are proved in Sections \ref{section:pointmass} and  \ref{section:sing} respectively.

\begin{theorem} \label{theorem:aNonTrivialPointmass}
If $X$ is a nondegenerate dart taking values in $\mathbb{R}^n$ which has a single point mass,
then there exists a nonnegative
continuous payoff function $f$ with compact support such that $(X,f)$ is not reasonable.
\end{theorem}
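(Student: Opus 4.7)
The plan is to construct an explicit two-bump payoff $f$ tuned to the continuous part of $X$ and show that $g_{X,f}$ is not monotone. First I would reduce to the one-dimensional case with the atom at the origin. Since the supremum over aims $a$ absorbs translations of $X$, I may assume the point mass sits at the origin, writing $X = p\delta_0 + (1-p)Y$ with $p \in (0,1)$ and $Y \sim \mu$ atomless and nondegenerate. Higher dimensions can be handled by pushing forward to a direction along which $\mu$ projects atomlessly (a generic direction does so, since $\mu$ has no atoms), and then lifting a 1D counterexample back up to $\mathbb{R}^n$ by multiplying by a transverse cutoff bump.

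In $\mathbb{R}$, pick a point $\bar x \neq 0$ in the support of $\mu$ (which exists since $\mu$ is atomless yet nontrivial) and define
\[
f(x) = \phi_r(x) + \phi_r(x - \bar x), \qquad \phi_r(x) := \max\{0, 1 - |x|/r\},
\]
with $r < |\bar x|/2$ so that the two tents have disjoint support and $\max f = 1$. At the reference distance $d = 1$ and aim $a = 0$, the atom contributes $pf(0) = p$ while the continuous part contributes at least $(1-p)E\phi_r(Y - \bar x) \ge \tfrac{1}{2}(1-p)\mu(B(\bar x, r/2)) > 0$, giving $g_{X,f}(1) \ge p + c$ for a strictly positive constant $c = c(r,\mu)$. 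One also has the endpoint behaviour $g_{X,f}(0^+) = \max f = 1$ (aim at a maximiser of $f$) and $g_{X,f}(\infty) = p$ (the continuous component washes away because $f$ is compactly supported), so the graph runs from $1$ down to $p$ while clearing the value $p+c$ at $d=1$.

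The critical step is to exhibit a distance $d' \in (0,1)$ at which $g_{X,f}(d') < g_{X,f}(1)$, which immediately violates monotonicity. For a $\mu$ concentrated near $\bar x$, the choice $d' = 1/2$ detunes the alignment of the continuous part of $d'X$ with the second bump of $f$, so a direct calculation should give $g_{X,f}(1/2) < p + c$. The \textbf{main obstacle} is handling diffuse $\mu$ — for instance a Cauchy or Gaussian distribution — because every distance $d'$ then admits some aim $a$ that still partially aligns $d'Y$ with one of the bumps, and an explicit computation in those cases shows that a naive two-bump payoff can in fact yield a monotone $g_{X,f}$.

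Overcoming this obstacle requires tailoring the construction to more global features of $\mu$: likely by carefully tuning the bump radii and positions to fine properties of $\mu$, or by adding a third bump at a second support point of $\mu$ so that the simultaneous alignment condition is uniquely pinned down at $d = 1$ and any $d' \neq 1$ must lose a controllable amount in at least one of the contributions. Once such a quantitative gap $g_{X,f}(d') < g_{X,f}(1)$ for some $d' < 1$ is established, the theorem follows directly, and the resulting $f$ is continuous, nonnegative, and compactly supported by construction.
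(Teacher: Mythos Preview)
Your proposal has a genuine gap precisely where you yourself flag it: you never establish that $g_{X,f}(d') < g_{X,f}(1)$ for some $d' < 1$, and you concede that for diffuse $\mu$ the two-bump payoff can produce a monotone $g_{X,f}$. The endpoint values you compute, $g_{X,f}(0^+)=1$, $g_{X,f}(1)\ge p+c$, $g_{X,f}(\infty)=p$, are perfectly consistent with a decreasing function, so they give no leverage. The suggested fixes (tuning radii, adding a third bump) are not carried out, and it is not clear that any finite bump configuration pins down the alignment uniquely at $d=1$ when $\mu$ has full support on $\mathbb{R}$: at every $d'$ there will be some aim that recovers a comparable fraction of the continuous mass on one of the bumps while keeping the atom on another. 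So as written this is an outline with the crucial estimate missing.

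The paper's argument avoids this difficulty by comparing small $d$ against large $d$ rather than $d'<1$ against $d=1$, and by using a payoff that does not vanish at infinity: a narrow tent of height $1$ at the origin together with a constant plateau of height $p/2$ on $\{|x|\ge 1\}$ (then truncated to compact support via Proposition~\ref{proposition:compactPayoff}). At large $d$, aiming at $0$ captures the atom on the spike and pushes essentially all of the continuous mass onto the plateau, giving a value strictly above $p$. At small $d$, one first shrinks $d$ so that almost all of the mass of $dX$ lies in $[-1/2,1/2]$, and then shrinks the tent width $\delta$; Lemma~\ref{lemma:un.cont} applied to the continuous part forces $\sup_a P(|a+dX|<\delta)\to p$, so $g_{X,k_\delta}(d)$ can be made arbitrarily close to $p$. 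The comparison is therefore between a value near $p$ and a value strictly above $p$, which yields non-monotonicity without any alignment argument for the continuous part.
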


\begin{theorem} \label{theorem:singular}
Given any dart $X$ in $\mathbb{R}$ which is not absolutely continuous and whose singular part
(in the Lebesgue decomposition) is compactly supported, 
then there exists a nonnegative bounded payoff function $f$ with compact support such that $(X,f)$ is not reasonable.
\end{theorem}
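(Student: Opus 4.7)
My strategy is to split the proof into three cases based on the Lebesgue decomposition of $X$. If $X$ has at least one atom, Theorem \ref{theorem:aNonTrivialPointmass} directly provides a nonnegative continuous compactly supported payoff function witnessing non-reasonableness. If $X$ is purely singular (no absolutely continuous part) and nondegenerate, then by hypothesis $X$ is compactly supported, so Theorem \ref{theorem:compactDart} applies. In the remaining main case, $X$ has no atoms and admits a mixture decomposition $X \stackrel{d}{=} pY + qZ$ with $p, q \in (0,1)$, where $Y$ is absolutely continuous with density $\rho$ and $Z$ is singular continuous with $\support(Z) \subseteq [-R, R]$ for some $R < \infty$.

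For this main case, my candidate is $f := \mathbf{1}_B$, where $B \subseteq [-R, R]$ is a Borel set with $\lambda(B) = 0$ and $Z(B) = 1$ (which exists by the singularity of $Z$). Then $f$ is nonnegative, bounded by $1$, measurable and compactly supported, so is a valid payoff function for the theorem. Since $a + dY$ is absolutely continuous and hence assigns probability zero to the Lebesgue-null set $(B-a)/d$, we obtain
\[
    Ef(a + dX) = p\Pr(a + dY \in B) + q\Pr(a + dZ \in B) = q\cdot Z\bigl((B - a)/d\bigr),
\]
so $\g{X,f}(d) = q \sup_a Z((B-a)/d) \le q$, with equality $\g{X,f}(1) = q$ attained at $a = 0$.

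To conclude non-reasonableness, it suffices to exhibit $d_0 < 1$ with $\g{X,f}(d_0) < q$, i.e.\ $\sup_a Z((B - a)/d_0) < 1$; combined with $\g{X,f}(1) = q$, this gives the desired failure of monotonicity. The main obstacle is showing that, for an appropriate choice of $B$, no dilation-translate $(B - a)/d_0$ can $Z$-essentially cover $B$ for $d_0$ slightly below $1$. I plan to argue by contradiction: if $\sup_a Z((B - a)/d) = 1$ held for all $d$ in some interval $(1 - \epsilon, 1)$, then a measurable-selection argument would yield shifts $d \mapsto a_d$ with $a_d + d\cdot\support(Z) \subseteq B$ modulo $Z$-null sets throughout the interval; but then the parametric union $\bigcup_{d \in (1-\epsilon, 1)} \bigl(a_d + d\cdot\support(Z)\bigr)$ fills a set of strictly positive Lebesgue measure (by a Fubini-type argument using that $\support(Z)$ is nondegenerate and non-atomic), contradicting $\lambda(B) = 0$. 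An alternative route is to choose $B$ via a Lusin-style construction designed to break any accidental scaling symmetry of $Z$, delivering the required $d_0$ directly.
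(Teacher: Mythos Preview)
Your case division is sensible and Cases 1 and 2 are handled correctly by invoking Theorems~\ref{theorem:aNonTrivialPointmass} and~\ref{theorem:compactDart}. The difficulty lies entirely in Case~3, and there your argument has a real gap.

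With $f=\mathbf{1}_B$ you correctly compute $\g{X,f}(d)=q\sup_a Z((B-a)/d)$ and $\g{X,f}(1)=q$. The whole proof then rests on producing some $d_0<1$ with $\sup_a Z((B-a)/d_0)<1$. Your proposed contradiction argument does not go through. First, $\sup_a Z((B-a)/d)=1$ need not be attained, so there may be no $a_d$ to select. Second, and more seriously, even granting measurable $a_d$ with $a_d+d\,\support(Z)\subseteq B$ modulo $Z$-null sets for every $d\in(1-\epsilon,1)$, the ``Fubini-type'' claim that $\bigcup_d(a_d+d\,\support(Z))$ has positive Lebesgue measure is unjustified: each slice is Lebesgue-null, the exceptional $Z$-null sets vary with $d$, and an uncountable union of null sets can be null or non-null. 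The underlying question---whether a nontrivial Lebesgue-null set can contain scaled-and-translated copies of itself for an entire interval of scaling factors---is a genuine Kakeya-type obstacle, and your sketch does not resolve it. The ``Lusin-style construction'' alternative is too vague to evaluate.

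The paper sidesteps this entirely. Rather than trying to show that at small distances one cannot fully hit the null set $N$, it augments the payoff with a tail term: take $f=\tfrac{2}{p}\mathbf 1_N+\mathbf 1_{[-2,2]^c}$ (and reduce to the case where $\mu_{ac}$ is not compactly supported, otherwise Theorem~\ref{theorem:compactDart} applies). At distance $1$ aiming at $0$ one scores $2+P(|X|>2)$. At small distance $t$, the singular contribution is still bounded by $2$, but to reach $[-2,2]^c$ while aiming near the origin forces $|X|>1/t-1$; hence $\g{X,f}(t)\le 2+P(|X|>1/t-1)$, which is strictly smaller once $t$ is small enough. The comparison is thus driven by the \emph{tails of the absolutely continuous part}, not by any delicate scaling property of the singular set. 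Compact support of $f$ is then recovered via Proposition~\ref{proposition:compactPayoff}.
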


Note that in this last result, we only claim that $f$ is bounded, not that it is necessarily continuous.
This naturally then raises the question of whether the existence of $f$'s for which $(X,f)$ is not reasonable implies 
the existence of ``nice'' payoff functions $g$ for which $(X,g)$ is not reasonable.
Section \ref{section:improvement}  will provide a number of results of this type
but doesn't allow us to determine whether $f$ can be taken to be continuous in Theorem \ref{theorem:singular}.
One such result of this type is the following.

\begin{theorem}\label{theorem:absCont}
Assume that $X$ is a dart taking values in $\mathbb{R}^n$ with an absolutely continuous law 
$\mu_X$ and  $f$ is a bounded payoff function on $\mathbb{R}^n$ such that $(X,f)$ is not 
reasonable. Then there exists a nonnegative continuous payoff function $h$ with compact support on $\mathbb{R}^n$ such
that $(X,h)$ is not reasonable.
\end{theorem}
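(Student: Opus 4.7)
The plan is to first truncate $f$ to a bounded, compactly supported function $f_K$, and then smooth $f_K$ by mollification to obtain a continuous, compactly supported $h_\epsilon$, checking at each step that non-reasonableness is preserved.

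Since $f$ is bounded, adding a constant makes it nonnegative without affecting whether $\g{X,f}$ is decreasing, so assume $f\geq 0$. Non-reasonableness gives $d_1<d_2$ with $\eta:=\g{X,f}(d_2)-\g{X,f}(d_1)>0$. Pick $a_2\in\mathbb{R}^n$ with $Ef(a_2+d_2X)>\g{X,f}(d_2)-\eta/4$. By tightness, choose a compact $K\subset\mathbb{R}^n$ with $P(a_2+d_2X\in K)>1-\eta/(4\|f\|_\infty)$, and set $f_K:=f\,\mathbf{1}_K$. Then $Ef_K(a_2+d_2X)>\g{X,f}(d_2)-\eta/2$, while $f_K\leq f$ yields $\g{X,f_K}(d_1)\leq \g{X,f}(d_1)=\g{X,f}(d_2)-\eta$; hence $\g{X,f_K}(d_2)\geq Ef_K(a_2+d_2X)>\g{X,f_K}(d_1)+\eta/2$, so $(X,f_K)$ is not reasonable, with $f_K$ nonnegative, bounded, and compactly supported.

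Next, let $\phi\geq 0$ be a smooth symmetric probability density with compact support, put $\phi_\epsilon(x):=\epsilon^{-n}\phi(x/\epsilon)$, and set $h_\epsilon:=\phi_\epsilon*f_K$, which is smooth, nonnegative, and compactly supported. By Fubini,
\[
Eh_\epsilon(a+dX)=\int\phi_\epsilon(z)\,Ef_K(a+z+dX)\,dz
\]
for every $a$ and $d$. Bounding the integrand pointwise in $z$ by $\g{X,f_K}(d)$ immediately gives $\g{X,h_\epsilon}(d)\leq \g{X,f_K}(d)$ for all $d$. In the other direction, the map $a\mapsto P(a):=Ef_K(a+d_2X)$ is a convolution of the bounded function $f_K$ with a rescaled reflection of the density $\rho$ of $X$, and is therefore continuous in $a$ by $L^1$-continuity of translation applied to $\rho$. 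Consequently $Eh_\epsilon(a_2+d_2X)=\int\phi_\epsilon(z)P(a_2+z)\,dz\to P(a_2)$ as $\epsilon\to 0$, and picking $\epsilon$ small enough yields $\g{X,h_\epsilon}(d_2)\geq Eh_\epsilon(a_2+d_2X)>\g{X,f_K}(d_1)+\eta/4\geq \g{X,h_\epsilon}(d_1)+\eta/4$; in particular $(X,h_\epsilon)$ is not reasonable.

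The main obstacle is precisely this last step: one needs $Eh_\epsilon(a_2+d_2X)$ to approximate $Ef_K(a_2+d_2X)$ at the specific point $a_2$, not merely in an $L^1$ sense over $a_2$. Absolute continuity of $\mu_X$ is the crucial ingredient here, since it forces $a\mapsto Ef_K(a+d_2X)$ to be genuinely continuous in $a$ even though $f_K$ is only bounded measurable; this is exactly the property that can fail in the singular setting of Theorem \ref{theorem:singular}, and is why the hypothesis cannot simply be dropped.
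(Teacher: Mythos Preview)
Your proof is correct and takes a genuinely different route from the paper's. The paper first invokes Proposition~\ref{proposition:compactPayoff} to reduce to a nonnegative bounded compactly supported $f$, then applies Lusin's theorem to find a continuous $h_\epsilon$ agreeing with $f$ off a set $A^c$ of Lebesgue measure $<\epsilon$, and finally uses the $\epsilon$--$\delta$ form of absolute continuity of $\mu_X$ with respect to Lebesgue measure to bound $\mu_X(\{x:a+dx\in A^c\})$ uniformly in $a$ for all $d\ge d_1$. This yields $|\g{X,h_\epsilon}(d)-\g{X,f}(d)|$ small uniformly on $[d_1,\infty)$.

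Your argument replaces Lusin by mollification: the key observation is that absolute continuity of $\mu_X$ makes $a\mapsto Ef_K(a+d_2X)$ continuous (as a convolution of an $L^\infty$ function with an $L^1$ density), so the approximate identity $\phi_\epsilon$ recovers the value at the single aiming point $a_2$. Combined with the trivial bound $\g{X,h_\epsilon}\le\g{X,f_K}$ from Jensen/Fubini, this suffices. Your approach is arguably more elementary, avoids Lusin, and even delivers a \emph{smooth} $h$; the paper's approach, on the other hand, gives a uniform approximation of $\g{X,\cdot}$ over the whole ray $d\ge d_1$ rather than just the two-point comparison. Both exploit absolute continuity in an essential but different way, and your closing remark correctly identifies why neither argument extends to the singular case of Theorem~\ref{theorem:singular}.
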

\begin{remark}
Without absolute continuity of $X$, we can still, by Proposition \ref{proposition:compactPayoff},
modify $f$ to be bounded and have compact support but not necessarily be continuous.
\end{remark}

Our next two results, which will be proved in Section \ref{section:closure},
concern operations which leave us within the class of reasonable distributions.

Let $\calF$ be some set of payoff functions, and let $\mathcal{X}_\calF$ 
be the set of darts which are reasonable with respect to $\calF$. It is natural to ask whether 
$X,Y\in\mathcal{X} _\calF$ implies that $X+Y\in\mathcal{X} _\calF$ ($X$ and $Y$ being
independent of course). The next result gives a positive answer for some classes $\calF$.

\begin{theorem} \label{theorem:closure}
Assume $\calF$ is a family of payoff functions and 
assume that $X_1,...,X_m$ are independent darts taking values in $\mathbb{R}^n$, each of 
which belongs to $\mathcal{X} _\calF$.  If $\calF$ is any of the following sets  
\begin{enumerate}
\item The set of all payoff functions
\item The set of all continuous payoff functions
\item The set of all bounded payoff functions
\item The set of all bounded continuous payoff functions
\item The set of all payoff functions of the same type as $\cos(\sum_{j=1}^n x_j)$,
\end{enumerate}
then for any $d_1,...,d_m,D_1,...,D_m\geq 0$ such that $d_j\leq D_j$ for all $j$ we have that 
\begin{equation}
    \sup_a Ef(a+\sum_{j=1}^m d_jX_j)
    \geq 
    \sup_a Ef(a+\sum_{j=1}^m D_jX_j),
    \ \forall f\in\calF{}
\end{equation}
and in particular $\sum_{j=1}^m X_j\in \mathcal{X} _\calF$. 
\end{theorem}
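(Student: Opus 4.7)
The plan is to argue by induction on $m$: I want to show that replacing one $d_k$ by $D_k\geq d_k$, with all other $d_j$'s fixed, cannot increase the supremum, and then iterate. After absorbing all the other $X_j$'s into an independent random vector $Y$, the single-step claim I need is: for $X\in\mathcal{X}_\calF$ independent of $Y$ and any $f\in\calF$,
\[
\sup_a \E f(a+dX+Y)\geq \sup_a \E f(a+DX+Y),\qquad 0\leq d\leq D.
\]
The natural move is Fubini: define $f_Y(x):=\E f(x+Y)$; then $\E f(a+dX+Y)=\E f_Y(a+dX)$, so the inequality becomes $\g{X,f_Y}(d)\geq \g{X,f_Y}(D)$, which is immediate from the assumed reasonableness of $X$ with respect to $f_Y$, \emph{provided} $f_Y\in\calF$. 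The whole proof thus reduces to verifying that each of the five families $\calF$ is closed under the averaging map $f\mapsto f_Y$ for every independent $Y$.

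For (1) and (3) closure is immediate: $f_Y$ is measurable by Fubini and inherits the upper bound (respectively the two-sided bound) of $f$. For (4), boundedness is again inherited and continuity of $f_Y$ follows from dominated convergence with dominant $\|f\|_\infty$. For (5), if $f(x)=\cos(\langle c,x\rangle+\phi)$ then a direct characteristic-function computation gives
\[
f_Y(x)=\mathrm{Re}\bigl[e^{i(\langle c,x\rangle+\phi)}\,\phi_Y(c)\bigr]=|\phi_Y(c)|\,\cos\bigl(\langle c,x\rangle+\phi+\arg\phi_Y(c)\bigr),
\]
a nonnegative scalar multiple of a function of the same cosine type; since $\g{X,cf}=c\,\g{X,f}$ for $c\geq 0$, reasonableness passes through.

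The main obstacle will be case (2). A continuous payoff function need only be bounded above, so Fatou's lemma only yields that $f_Y$ is upper semicontinuous, not continuous in general, and the clean reduction above breaks down. The natural remedy is to truncate from below: $f^{(N)}:=\max(f,-N)$ is bounded continuous, case (4) (already settled) gives the inequality for each $f^{(N)}$, and monotone convergence applied to the nonnegative sequence $\sup f-f^{(N)}\uparrow \sup f-f$ yields $\E f^{(N)}(a+\sum_j d_j X_j)\downarrow \E f(a+\sum_j d_j X_j)$ pointwise in $a$. The delicate point will be the interchange of $\sup_a$ with $\lim_N$: I would handle it by picking near-optimal sequences $a_N$ for each $f^{(N)}$ on the $D$-side, bounding $\E f^{(N)}(a_N+\sum_j d_j X_j)$ from below using case (4), and invoking upper semicontinuity of $f_Y$ to pass to a limit without losing value. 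This is where the real work sits; the other four families are clean consequences of the Fubini reduction.
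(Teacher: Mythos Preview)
Your approach is the paper's: isolate the ``averaging--closure'' property
\[
\Bigl\{\,x\mapsto \E f\bigl(x+\textstyle\sum_i d_iX_i\bigr)\;:\;f\in\calF,\ d_i\geq 0\,\Bigr\}\subseteq\calF
\]
for arbitrary independent darts, verify it for each listed family, and then run the one--coordinate--at--a--time induction via Fubini. The paper states exactly this and gives the same two--step argument for $m=2$; it simply asserts that ``one easily checks'' the closure property for all five families.

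Your hesitation about case~(2) is justified and in fact goes beyond what the paper supplies. For a continuous $f$ that is only bounded above, Fatou applied to $\sup f - f$ gives only that $f_Y$ is upper semicontinuous; it need not be continuous, and it can take the value $-\infty$. (An explicit example: $f(x)=-\sum_{m\geq 1} m^2\,g(x-m)$ with $g$ a narrow bump, and $Y$ with $P(Y=n)=c/n^2$; then $f_Y$ is $-\infty$ on some intervals and $0$ on others.) The same $-\infty$ issue also lurks in case~(1), which both you and the paper pass over: $f_Y$ is certainly measurable and bounded above, but it need not be real--valued, so strictly speaking it is not a payoff function.

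Your proposed remedy for~(2) by truncating to $f^{(N)}=\max(f,-N)$ and invoking the already proved case~(4) gives, for every $N$,
\[
\sup_a \E f^{(N)}(a+S_d)\ \geq\ \sup_a \E f^{(N)}(a+S_D)\ \geq\ \sup_a \E f(a+S_D),
\]
but passing $N\to\infty$ on the left only yields $\inf_N\sup_a\E f^{(N)}(a+S_d)\geq \sup_a\E f(a+S_D)$, and in general $\inf_N\sup_a\geq\sup_a\inf_N$ goes the wrong way. The compactness\,/\,u.s.c.\ finish you sketch requires the near--optimal points on the $d$--side to remain bounded, and there is no a priori reason for that. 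So your treatment of cases~(3),~(4),~(5) is complete and matches the paper, while for cases~(1) and~(2) you have, commendably, noticed a gap that the paper's ``easily checks'' does not fill---but your sketch does not yet close it either.
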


\medskip
Next, one would expect that the set of reasonable darts is closed with respect to 
convergence in distribution.  The following theorem provides a result of this type. 
See \eqref{eq:function.spaces} for some notation used here.

\begin{theorem} \label{theorem:convergenceInDistribution}
Let $\{X_j\}_{j=1}^\infty$ be a sequence of darts taking values in $\mathbb{R}^n$ which converges in 
distribution to some dart $X$. \\
(i) For any $f\in C_0(\mathbb{R}^n)$, $d> 0$, 
$\lim_j \g{X_j,f}(d)=g_{_{X,f}}(d)$. As a consequence, for any $f\in C_0(\mathbb{R}^n)$ for 
which $(X_j,f)$ is reasonable for all $j$, we have that $(X,f)$ is reasonable.  \\
(ii) The first statement in (i) is false if  $f\in C_0(\mathbb{R}^n)$ is replaced by 
$f\in C_b(\mathbb{R}^n)$. \\
(iii)  If, in addition, $X_j$ approaches $X$ in total variation, then 
(i) is still true if  $C_0(\mathbb{R}^n)$ is replaced by  $C_b(\mathbb{R}^n)$. \\
(iv) If each $X_j$ is reasonable with respect to $C_b(\mathbb{R}^n)$, then
 $X$ is reasonable with respect to $C_b(\mathbb{R}^n)$.  
\end{theorem}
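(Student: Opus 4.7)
For part (i), my plan is a convergence-in-distribution-plus-supremum argument. Since $f \in C_0(\mathbb{R}^n) \subset C_b(\mathbb{R}^n)$, distributional convergence gives $Ef(a+dX_j) \to Ef(a+dX)$ for each fixed $a$, so the general inequality $\sup \circ \liminf \leq \liminf \circ \sup$ yields $\g{X,f}(d) \leq \liminf_j \g{X_j,f}(d)$. For the matching upper bound $\limsup_j \g{X_j,f}(d) \leq \g{X,f}(d)$, pick $a_j$ with $Ef(a_j + dX_j) \geq \g{X_j,f}(d) - 1/j$ and extract a subsequence. If some $\{a_{j_k}\}$ is bounded with $a_{j_k} \to a^*$, Slutsky gives $a_{j_k}+dX_{j_k} \to a^*+dX$ in distribution and bounded-continuous convergence of expectations yields $Ef(a_{j_k}+dX_{j_k}) \to Ef(a^*+dX) \leq \g{X,f}(d)$. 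If instead $|a_{j_k}| \to \infty$, then tightness of $\{X_j\}$ (a consequence of distributional convergence to a probability measure) combined with $f \in C_0(\mathbb{R}^n)$ forces $Ef(a_{j_k}+dX_{j_k}) \to 0$; and by the same tightness-plus-$C_0$ argument applied to $X$ we have $\g{X,f}(d) \geq 0$, so again the bound holds. The consequence in (i) is then immediate, as a pointwise limit of decreasing functions is decreasing.

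Part (iii) is the shortest. Total variation distance is invariant under the affine map $y \mapsto a + dy$, so
\[|Ef(a+dX_j) - Ef(a+dX)| \leq \|f\|_\infty \cdot \|\mu_{a+dX_j}-\mu_{a+dX}\|_{TV} = \|f\|_\infty \cdot \|\mu_{X_j}-\mu_X\|_{TV},\]
and the right-hand side does not depend on $a$. Taking $\sup_a$ on both sides gives $|\g{X_j,f}(d) - \g{X,f}(d)| \leq \|f\|_\infty \cdot \|\mu_{X_j}-\mu_X\|_{TV} \to 0$.

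For part (iv), my plan is to bootstrap from $C_0$ to $C_b$ by approximation from below. Since adding a constant to $f$ shifts $\g{X,f}(d)$ by the same constant and does not affect monotonicity in $d$, I may assume $f \geq 0$. Pick continuous compactly supported cutoffs $\chi_n$ with $0 \leq \chi_n \leq 1$ and $\chi_n \uparrow 1$ pointwise, and set $f_n = f\chi_n \in C_0(\mathbb{R}^n)$. Since $C_0(\mathbb{R}^n) \subset C_b(\mathbb{R}^n)$, each $X_j$ is reasonable against $f_n$, so part (i) gives that $\g{X,f_n}$ is decreasing in $d$ for every $n$. A brief monotone-convergence argument — the inequality $\lim_n \g{X,f_n}(d) \leq \g{X,f}(d)$ is immediate from $f_n \leq f$, while the reverse comes from fixing $a^*$ with $Ef(a^*+dX)$ within $\epsilon$ of $\g{X,f}(d)$ and applying MCT at that single $a^*$ — shows $\g{X,f_n}(d) \uparrow \g{X,f}(d)$ for every $d$. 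Since a pointwise limit of decreasing functions is decreasing, $\g{X,f}$ is decreasing in $d$, giving (iv).

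Finally, (ii) is where I expect the main obstacle. The Fatou-style bound $\g{X,f}(d) \leq \liminf_j \g{X_j,f}(d)$ extends verbatim to $C_b$, so any counterexample must exploit failure of the upper bound — near-optimal $a_j$ for $X_j$ escaping to infinity while an $f \in C_b \setminus C_0$ contributes non-trivially there. A quick sanity check rules out $X = \delta_0$ (there $\g{X_j,f}(d) \leq \sup f = \g{X,f}(d)$ is forced), so $X$ must be non-degenerate. My plan would be to look for $f \in C_b$ that fails to be uniformly continuous — a candidate is something along the lines of $\sin(x^2)$ — paired with an $X_j$ converging weakly to $X$ whose atomic or lattice structure interacts resonantly with $f$ at scales $a_j \to \infty$, producing an expectation that the smoothed limit $X$ cannot match. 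Engineering the right $f$ and $X_j$ together is the step I expect to be hardest.
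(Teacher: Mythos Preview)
Your arguments for (i), (iii), and (iv) are correct and essentially match the paper's. For (i) the paper also splits into the easy $\liminf$ direction and then, for the reverse, uses tightness to confine near-optimal aims to a bounded region (or else $f\in C_0$ forces the expectation to $0$); your subsequence dichotomy is the same idea. For (iii) the paper simply says ``very easy and left to the reader'', and your uniform-in-$a$ total variation bound is exactly what is intended. For (iv) the paper invokes its earlier Proposition~\ref{proposition:compactPayoff} (which, for continuous bounded $f$, produces a continuous compactly supported $f'$ witnessing nonreasonableness by the same cutoff-and-monotone-convergence argument you wrote out), so your direct argument is just an inlining of that proposition.

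The genuine gap is (ii): you correctly identify that any counterexample must break the $\limsup$ direction via escaping near-optimal aims, but you do not actually construct one, and the vague candidate $\sin(x^2)$ is not accompanied by a choice of $X_j$. The paper's example is concrete and somewhat different in flavor from what you sketch. Take $X_k$ uniform on $\{0,1/k,2/k,\ldots,1\}$ and $X$ uniform on $[0,1]$. Build $f\in C_b(\mathbb{R})$ supported on $\bigcup_{m\ge 0}[2m-\tfrac{1}{10},\,2m+1+\tfrac{1}{10}]$, taking values in $[0,1]$, equal to $1$ at each point of $\{2m,\,2m+\tfrac{1}{m},\,2m+\tfrac{2}{m},\ldots,\,2m+1\}$, vanishing at the endpoints of each supporting interval, and with zero set of Lebesgue measure at least $0.9$ on each such interval. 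Then at $d=1$, aiming $X_k$ at $a=2k$ lands every atom on a spike, so $\g{X_k,f}(1)=1$ for all $k$; but for the continuous limit $X$, the large zero set forces $\g{X,f}(1)\le 1/2$. The point is not lack of uniform continuity of $f$ per se, but rather that $f$ encodes, on disjoint far-away blocks, finer and finer lattice structure that the discrete $X_k$ can exploit (by aiming at block $2k$) while the absolutely continuous limit cannot.
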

Note that in (iv) we are not claiming that $(X_j,f)$ being reasonable for a fixed 
$f\in C_b(\mathbb{R}^n)$ implies that $(X,f)$ is reasonable.

\medskip
We are mostly concerned about whether darts are reasonable rather than whether payoff functions 
are reasonable. Nonetheless, we give one result of the latter type.

\begin{definition}
A function $f:\mathbb{R}^n\to\mathbb{R}$ is called \textbf{weakly unimodal} if for all 
$x\in \mathbb{R}^n$,  $f(rx)$ is decreasing in $r$ on $[0,\infty)$.
\end{definition}

In Section \ref{section:functions}, we prove 
\begin{proposition} \label{proposition:reasonable.function}
If $f$ is a weakly unimodal payoff function, then $f$ is reasonable.
\end{proposition}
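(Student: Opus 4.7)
The plan is to exploit the scaling identity $a + d_2 X = (d_2/d_1)\bigl((d_1/d_2) a + d_1 X\bigr)$, which lets us turn a configuration at distance $d_2$ into a scaled configuration at distance $d_1$, and then use weak unimodality to absorb the scale factor in our favor.

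More concretely, I would fix a dart $X$ on $\mathbb{R}^n$ and $0 < d_1 \leq d_2$, and set $r := d_1/d_2 \in (0,1]$. Given any aim point $a \in \mathbb{R}^n$ at distance $d_2$, I would define a new aim point $a' := r a$ for distance $d_1$. A direct computation gives the key identity
\begin{equation}
a' + d_1 X \;=\; r a + d_1 X \;=\; r\bigl(a + d_2 X\bigr).
\end{equation}
Since $r \in [0,1]$, weak unimodality applied pointwise along the ray through $y := a + d_2 X(\omega)$ yields $f(r y) \geq f(y)$ for every realization, so
\begin{equation}
\E f(a' + d_1 X) \;=\; \E f\bigl(r(a + d_2 X)\bigr) \;\geq\; \E f(a + d_2 X).
\end{equation}
Taking the supremum over $a \in \mathbb{R}^n$ on the right, and noting that $a \mapsto ra$ is a bijection of $\mathbb{R}^n$ whenever $r > 0$, so that $\sup_a \E f(r a + d_1 X) = \sup_{a'} \E f(a' + d_1 X) = g_{_{X,f}}(d_1)$, gives $g_{_{X,f}}(d_1) \geq g_{_{X,f}}(d_2)$, which is exactly what we need.

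There is essentially no serious obstacle here: the content is the scaling identity combined with the definition of weak unimodality, and both work in arbitrary dimension without integrability issues because $f$ is bounded from above (so the expectations and their sup are well-defined in $(-\infty, +\infty]$, and the inequality is preserved even if both sides are $+\infty$). The only minor care point is allowing $d_1 = d_2$ (trivial) and handling the case where $r = 0$ would arise from letting $d_1 \to 0$; but since we only compare two positive distances, $r > 0$ and the bijection step is valid. No measurability or continuity assumption on $f$ beyond the standing ones is needed.
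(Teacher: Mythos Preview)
Your proof is correct and is essentially identical to the paper's: both use the scaling identity $a'+d_1X = \frac{d_1}{d_2}(a+d_2X)$ with $a'=\frac{d_1}{d_2}a$, apply weak unimodality pointwise to get $f\bigl(\frac{d_1}{d_2}y\bigr)\ge f(y)$, and then take the supremum over $a$ using that $a\mapsto \frac{d_1}{d_2}a$ is a bijection of $\mathbb{R}^n$.
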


The rest of the paper is organized as follows. In the second part of the present section, we discuss what happens with a {\it standard} dartboard.
In Section \ref{section:background}, we simply give some elementary background, some notation and a couple of 
elementary results. The theorems in the introduction are proved in the relevant sections as stated.
Finally, in Section \ref{section:open}, we list some questions. 

\subsection{What happens for {\it standard} darts?}

We have computed numerically what happens for the standard dartboard $f$ in $\mathbb{R}^2$ 
assuming that the dart distribution $X$  is uniform distribution on 
a disc.  We have looked at both the best place to aim and how the function $g_{_{X,f}}(d)$ behaves. We will see in particular that $(X,f)$ is not reasonable.

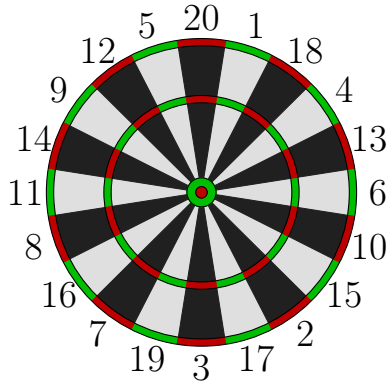
\begin{figure}    %Dartboard
\begin{center}
\begin{tikzpicture} [scale = 0.4]

\begin{scope}
\clip (0,0) circle (5.1cm);

\foreach \x in {0,2,4,6,8,10,12,14,16,18}
\filldraw[green!75!black] (0,0) -- ({6*cos(\x*18-9)}, {6*sin(\x*18-9)}) -- ({6*cos((\x+1)*18-9)}, {6*sin((\x+1)*18-9)}) -- cycle;

\foreach \x in {0,2,4,6,8,10,12,14,16,18}
\filldraw[red!75!black] (0,0) -- ({6*cos(\x*18+9)}, {6*sin(\x*18+9)}) -- ({6*cos((\x+1)*18+9)}, {6*sin((\x+1)*18+9)}) -- cycle;
\end{scope}

\begin{scope}
\clip (0,0) circle (4.86cm);

\foreach \x in {0,2,4,6,8,10,12,14,16,18}
\filldraw[white!75!gray] (0,0) -- ({6*cos(\x*18-9)}, {6*sin(\x*18-9)}) -- ({6*cos((\x+1)*18-9)}, {6*sin((\x+1)*18-9)}) -- cycle;

\foreach \x in {0,2,4,6,8,10,12,14,16,18}
\filldraw[gray!25!black] (0,0) -- ({6*cos(\x*18+9)}, {6*sin(\x*18+9)}) -- ({6*cos((\x+1)*18+9)}, {6*sin((\x+1)*18+9)}) -- cycle;
\end{scope}

\begin{scope}
\clip (0,0) circle (3.21cm);

\foreach \x in {0,2,4,6,8,10,12,14,16,18}
\filldraw[green!75!black] (0,0) -- ({6*cos(\x*18-9)}, {6*sin(\x*18-9)}) -- ({6*cos((\x+1)*18-9)}, {6*sin((\x+1)*18-9)}) -- cycle;

\foreach \x in {0,2,4,6,8,10,12,14,16,18}
\filldraw[red!75!black] (0,0) -- ({6*cos(\x*18+9)}, {6*sin(\x*18+9)}) -- ({6*cos((\x+1)*18+9)}, {6*sin((\x+1)*18+9)}) -- cycle;
\end{scope}

\begin{scope}
\clip (0,0) circle (2.97cm);

\foreach \x in {0,2,4,6,8,10,12,14,16,18}
\filldraw[white!75!gray] (0,0) -- ({6*cos(\x*18-9)}, {6*sin(\x*18-9)}) -- ({6*cos((\x+1)*18-9)}, {6*sin((\x+1)*18-9)}) -- cycle;

\foreach \x in {0,2,4,6,8,10,12,14,16,18}
\filldraw[gray!25!black] (0,0) -- ({6*cos(\x*18+9)}, {6*sin(\x*18+9)}) -- ({6*cos((\x+1)*18+9)}, {6*sin((\x+1)*18+9)}) -- cycle;
\end{scope}

\filldraw [green!75!black] (0,0) circle (0.48cm);
\filldraw [red!75!black] (0,0) circle (0.19cm);

\foreach \x in {0.19, 0.48,  2.97, 3.21,4.86, 5.1}
\draw (0,0) circle (\x cm);

\foreach \x/\y in {0/6,1/13,2/4,3/18,4/1, 5/20, 6/5, 7/12, 8/9 , 9/14, 10/11, 11/8, 12/16 , 13/7 , 14/19 , 15/3 , 16/17 , 17/2 , 18/15, 19/10}
\node at ({5.8*cos(\x*18)}, {5.8*sin(\x*18)})  {\Large \y}; 

\end{tikzpicture}

\caption{The dartboard is divided into 20 sectors with an irregular but standardized numbering. The black and white regions give a \emph{single} score (the plain number of the sector), while the outer red-green \emph{double ring} and the similar \emph{treble ring} half-way between the center and the rim give twice and three times the number of the sector respectively. In the middle of the board, the green \emph{bull's ring} gives 25 points, and the red \emph{bull's eye} 50 points. }
\label{F:dartboard}
\end{center}

\end{figure}

\begin{center}
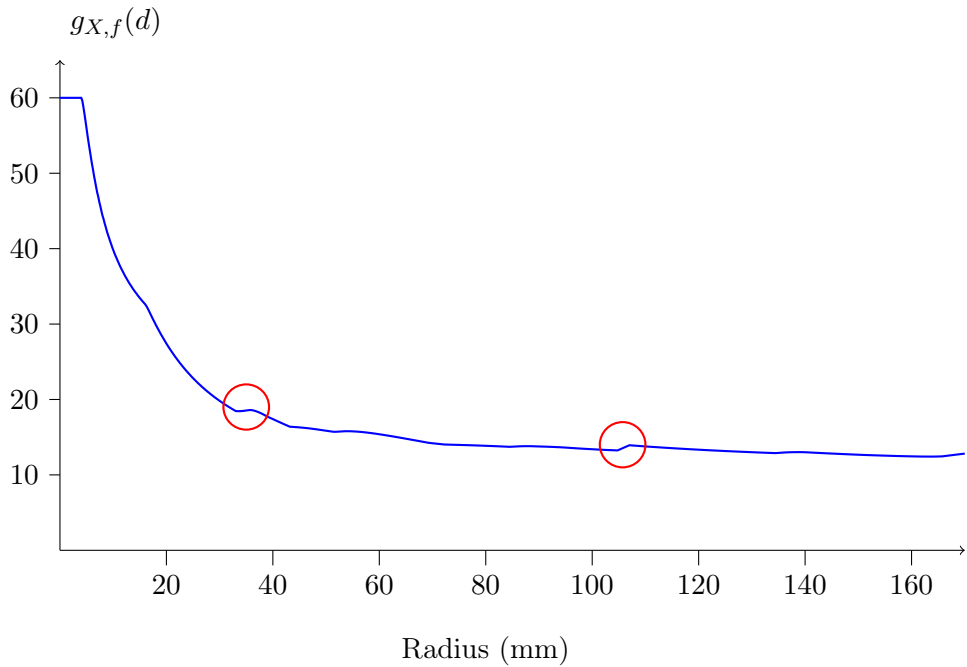
\begin{figure}
\begin{tikzpicture} 

\begin{scope} [xscale = 0.14]

\draw[->] (0,0) -- (0, 6.50) ; 
\draw [->] (0,0) -- (85.0, 0);

%\node [right] at (0, 7.0) {Maximum expected score};
\node [right] at (0, 7.0) {$g_{X,f}(d)$};
\node [below] at (40.0, -1) {Radius (mm)};

\draw (-1, 1.00) -- (0, 1.00);
\node [left] at (-1, 1.00) {$10$};
\draw (-1, 2.00) -- (0, 2.00);
\node [left] at (-1, 2.00) {$20$};
\draw (-1, 3.00) -- (0, 3.00);
\node [left] at (-1, 3.00) {$30$};
\draw (-1, 4.00) -- (0, 4.00);
\node [left] at (-1, 4.00) {$40$};
\draw (-1, 5.00) -- (0, 5.00);
\node [left] at (-1, 5.00) {$50$};
\draw (-1, 6.00) -- (0, 6.00);
\node [left] at (-1, 6.00) {$60$};

\draw (10, -0.2) -- (10, 0);
\node [below] at (10, -0.2) {$20$};
\draw (20, -0.2) -- (20, 0);
\node [below] at (20, -0.2) {$40$};
\draw (30, -0.2) -- (30, 0);
\node [below] at (30, -0.2) {$60$};
\draw (40, -0.2) -- (40, 0);
\node [below] at (40, -0.2) {$80$};
\draw (50, -0.2) -- (50, 0);
\node [below] at (50, -0.2) {$100$};
\draw (60, -0.2) -- (60, 0);
\node [below] at (60, -0.2) {$120$};
\draw (70, -0.2) -- (70, 0);
\node [below] at (70, -0.2) {$140$};
\draw (80, -0.2) -- (80, 0);
\node [below] at (80, -0.2) {$160$};

\draw [thick, blue] (0.0, 6.0) -- (0.1, 6.0) -- (0.2, 6.0) -- (0.3, 6.0) -- (0.4, 6.0) -- (0.5, 6.0) -- (0.6, 6.0) -- (0.7, 6.0) -- (0.8, 6.0) -- (0.90000004, 6.0) -- (1.0, 6.0) -- (1.1, 6.0) -- (1.2, 6.0) -- (1.3000001, 6.0) -- (1.4, 6.0) -- (1.5, 6.0) -- (1.6, 6.0) -- (1.7, 6.0) -- (1.8000001, 6.0) -- (1.9, 6.0) -- (2.0, 6.0) -- (2.1000001, 5.9621267) -- (2.2, 5.881345) -- (2.3, 5.789474) -- (2.4, 5.6921363) -- (2.5, 5.592045) -- (2.6000001, 5.490806) -- (2.7, 5.3971167) -- (2.8, 5.306971) -- (2.9, 5.2179537) -- (3.0, 5.1279693) -- (3.1000001, 5.052316) -- (3.2, 4.9654098) -- (3.3, 4.892344) -- (3.4, 4.816) -- (3.5, 4.742798) -- (3.6000001, 4.686405) -- (3.7, 4.616352) -- (3.8, 4.5597167) -- (3.9, 4.494453) -- (4.0, 4.436617) -- (4.1, 4.3911805) -- (4.2000003, 4.3348417) -- (4.3, 4.2870965) -- (4.4, 4.2346554) -- (4.5, 4.1877065) -- (4.6, 4.14883) -- (4.7000003, 4.10432) -- (4.8, 4.0646057) -- (4.9, 4.02206) -- (5.0, 3.9844487) -- (5.1, 3.946409) -- (5.2000003, 3.910792) -- (5.3, 3.8798954) -- (5.4, 3.8458176) -- (5.5, 3.8149204) -- (5.6, 3.781615) -- (5.7000003, 3.7528706) -- (5.8, 3.7239747) -- (5.9, 3.6974251) -- (6.0, 3.66959) -- (6.1, 3.6423252) -- (6.2000003, 3.618108) -- (6.3, 3.5931904) -- (6.4, 3.568817) -- (6.5, 3.5447903) -- (6.6, 3.5230014) -- (6.7000003, 3.5024514) -- (6.8, 3.4800415) -- (6.9, 3.458559) -- (7.0, 3.4391465) -- (7.1, 3.4193387) -- (7.2000003, 3.4021306) -- (7.3, 3.3813138) -- (7.4, 3.363578) -- (7.5, 3.345712) -- (7.6, 3.3292139) -- (7.7000003, 3.3121204) -- (7.8, 3.2942593) -- (7.9, 3.279665) -- (8.0, 3.263483) -- (8.1, 3.2430825) -- (8.2, 3.219146) -- (8.3, 3.1911876) -- (8.400001, 3.1638188) -- (8.5, 3.1329722) -- (8.6, 3.1048326) -- (8.7, 3.0755944) -- (8.8, 3.0474107) -- (8.900001, 3.020471) -- (9.0, 2.991409) -- (9.1, 2.9651344) -- (9.2, 2.9387276) -- (9.3, 2.9127648) -- (9.400001, 2.8867648) -- (9.5, 2.8610408) -- (9.6, 2.837241) -- (9.7, 2.8128433) -- (9.8, 2.7885206) -- (9.900001, 2.765156) -- (10.0, 2.7411878) -- (10.1, 2.7192805) -- (10.2, 2.6965363) -- (10.3, 2.6749768) -- (10.400001, 2.6540694) -- (10.5, 2.63298) -- (10.6, 2.6124089) -- (10.7, 2.5919561) -- (10.8, 2.5719645) -- (10.900001, 2.553058) -- (11.0, 2.5339592) -- (11.1, 2.5147045) -- (11.2, 2.496567) -- (11.3, 2.4778118) -- (11.400001, 2.4608095) -- (11.5, 2.442169) -- (11.6, 2.4252408) -- (11.7, 2.4086854) -- (11.8, 2.391995) -- (11.900001, 2.37561) -- (12.0, 2.359383) -- (12.1, 2.3439949) -- (12.2, 2.3288722) -- (12.3, 2.3132863) -- (12.400001, 2.2977517) -- (12.5, 2.2830777) -- (12.6, 2.268711) -- (12.7, 2.2551553) -- (12.8, 2.240684) -- (12.900001, 2.226762) -- (13.0, 2.2131188) -- (13.1, 2.199833) -- (13.2, 2.1870184) -- (13.3, 2.173501) -- (13.400001, 2.1607597) -- (13.5, 2.148912) -- (13.6, 2.1360328) -- (13.7, 2.1239343) -- (13.8, 2.1120677) -- (13.900001, 2.100361) -- (14.0, 2.0889986) -- (14.1, 2.0773036) -- (14.2, 2.0661244) -- (14.3, 2.0550048) -- (14.400001, 2.0442665) -- (14.5, 2.0330079) -- (14.6, 2.0224562) -- (14.7, 2.0122626) -- (14.8, 2.0020587) -- (14.900001, 1.9916893) -- (15.0, 1.9815664) -- (15.1, 1.9719046) -- (15.2, 1.9625301) -- (15.3, 1.9526232) -- (15.400001, 1.9434963) -- (15.5, 1.9337481) -- (15.6, 1.9247707) -- (15.7, 1.9158577) -- (15.8, 1.9066035) -- (15.900001, 1.8981279) -- (16.0, 1.8893496) -- (16.1, 1.8809369) -- (16.2, 1.8721253) -- (16.300001, 1.8637766) -- (16.4, 1.8558174) -- (16.5, 1.8474369) -- (16.6, 1.8458111) -- (16.7, 1.845026) -- (16.800001, 1.8449291) -- (16.9, 1.8452778) -- (17.0, 1.845632) -- (17.1, 1.8469404) -- (17.2, 1.8481245) -- (17.300001, 1.8499825) -- (17.4, 1.8518156) -- (17.5, 1.8537892) -- (17.6, 1.8559735) -- (17.7, 1.857712) -- (17.800001, 1.8599154) -- (17.9, 1.8598593) -- (18.0, 1.8588799) -- (18.1, 1.8566135) -- (18.2, 1.8530912) -- (18.300001, 1.8495034) -- (18.4, 1.8448724) -- (18.5, 1.8400973) -- (18.6, 1.8348703) -- (18.7, 1.8289973) -- (18.800001, 1.8228289) -- (18.9, 1.816197) -- (19.0, 1.8091313) -- (19.1, 1.8021475) -- (19.2, 1.7946062) -- (19.300001, 1.7872151) -- (19.4, 1.7793449) -- (19.5, 1.7715296) -- (19.6, 1.7652066) -- (19.7, 1.7589377) -- (19.800001, 1.7527936) -- (19.9, 1.7463881) -- (20.0, 1.7398818) -- (20.1, 1.7334839) -- (20.2, 1.7268914) -- (20.300001, 1.7205924) -- (20.4, 1.7138271) -- (20.5, 1.70726) -- (20.6, 1.7009977) -- (20.7, 1.6946421) -- (20.800001, 1.6883593) -- (20.9, 1.6819044) -- (21.0, 1.6755279) -- (21.1, 1.669479) -- (21.2, 1.6630553) -- (21.300001, 1.6570463) -- (21.4, 1.6508055) -- (21.5, 1.644648) -- (21.6, 1.638798) -- (21.7, 1.6379608) -- (21.800001, 1.6369826) -- (21.9, 1.6361637) -- (22.0, 1.6353226) -- (22.1, 1.6341285) -- (22.2, 1.6331669) -- (22.300001, 1.6320333) -- (22.4, 1.6310265) -- (22.5, 1.6297423) -- (22.6, 1.6284714) -- (22.7, 1.6272119) -- (22.800001, 1.6259634) -- (22.9, 1.6245251) -- (23.0, 1.6230907) -- (23.1, 1.6216611) -- (23.2, 1.6202374) -- (23.300001, 1.6186609) -- (23.4, 1.6169271) -- (23.5, 1.6152285) -- (23.6, 1.6135771) -- (23.7, 1.6117958) -- (23.800001, 1.6098776) -- (23.9, 1.6080809) -- (24.0, 1.6061525) -- (24.1, 1.6042938) -- (24.2, 1.6022122) -- (24.300001, 1.6001428) -- (24.4, 1.598121) -- (24.5, 1.5960305) -- (24.6, 1.5939885) -- (24.7, 1.5918391) -- (24.800001, 1.5897979) -- (24.9, 1.5877669) -- (25.0, 1.5857061) -- (25.1, 1.5837649) -- (25.2, 1.5816916) -- (25.300001, 1.5797328) -- (25.4, 1.57772) -- (25.5, 1.5756874) -- (25.6, 1.5737435) -- (25.7, 1.5718126) -- (25.800001, 1.5711814) -- (25.9, 1.5718262) -- (26.0, 1.5725284) -- (26.1, 1.5732467) -- (26.2, 1.5740354) -- (26.300001, 1.5748565) -- (26.4, 1.5756552) -- (26.5, 1.5765451) -- (26.6, 1.5774857) -- (26.7, 1.578404) -- (26.800001, 1.5789855) -- (26.9, 1.5791998) -- (27.0, 1.5792354) -- (27.1, 1.5790681) -- (27.2, 1.5787714) -- (27.300001, 1.5783198) -- (27.4, 1.5778174) -- (27.5, 1.5772117) -- (27.6, 1.576477) -- (27.7, 1.5757341) -- (27.800001, 1.5748324) -- (27.9, 1.5739776) -- (28.0, 1.5729624) -- (28.1, 1.5717921) -- (28.2, 1.5706767) -- (28.300001, 1.5693694) -- (28.4, 1.5680825) -- (28.5, 1.5665944) -- (28.6, 1.5650886) -- (28.7, 1.5636116) -- (28.800001, 1.5619261) -- (28.9, 1.5602602) -- (29.0, 1.5584873) -- (29.1, 1.5567087) -- (29.2, 1.5550193) -- (29.300001, 1.5530394) -- (29.4, 1.5511832) -- (29.5, 1.5492191) -- (29.6, 1.5472822) -- (29.7, 1.5452689) -- (29.800001, 1.5431536) -- (29.9, 1.5411353) -- (30.0, 1.538988) -- (30.1, 1.5368812) -- (30.2, 1.5347142) -- (30.300001, 1.5324788) -- (30.4, 1.5303224) -- (30.5, 1.5280348) -- (30.6, 1.5257707) -- (30.7, 1.5235498) -- (30.800001, 1.5212263) -- (30.9, 1.5189648) -- (31.0, 1.5164813) -- (31.1, 1.5141951) -- (31.2, 1.5117998) -- (31.300001, 1.5094572) -- (31.4, 1.5070615) -- (31.5, 1.5045689) -- (31.6, 1.5021526) -- (31.7, 1.4997301) -- (31.800001, 1.4972823) -- (31.9, 1.4947755) -- (32.0, 1.4923409) -- (32.100002, 1.4898999) -- (32.2, 1.4874014) -- (32.3, 1.4848348) -- (32.4, 1.4823221) -- (32.5, 1.4798108) -- (32.600002, 1.4773406) -- (32.7, 1.4747974) -- (32.8, 1.4722437) -- (32.9, 1.4696716) -- (33.0, 1.4671714) -- (33.100002, 1.464615) -- (33.2, 1.462133) -- (33.3, 1.4595809) -- (33.4, 1.4569381) -- (33.5, 1.4544014) -- (33.600002, 1.4518328) -- (33.7, 1.4492761) -- (33.8, 1.4467858) -- (33.9, 1.4441307) -- (34.0, 1.441547) -- (34.100002, 1.4389431) -- (34.2, 1.4363991) -- (34.3, 1.4338789) -- (34.4, 1.4312133) -- (34.5, 1.4287283) -- (34.600002, 1.4260527) -- (34.7, 1.4242443) -- (34.8, 1.4226642) -- (34.9, 1.4210936) -- (35.0, 1.4196044) -- (35.100002, 1.4180738) -- (35.2, 1.4165686) -- (35.3, 1.4150478) -- (35.4, 1.4135526) -- (35.5, 1.4120847) -- (35.600002, 1.4105419) -- (35.7, 1.4090759) -- (35.8, 1.4075608) -- (35.9, 1.4061031) -- (36.0, 1.4046422) -- (36.100002, 1.403138) -- (36.2, 1.4025986) -- (36.3, 1.402227) -- (36.4, 1.4017475) -- (36.5, 1.4013493) -- (36.600002, 1.4009365) -- (36.7, 1.4005082) -- (36.8, 1.400123) -- (36.9, 1.3997316) -- (37.0, 1.3993701) -- (37.100002, 1.3989623) -- (37.2, 1.3985934) -- (37.3, 1.3982569) -- (37.4, 1.3978628) -- (37.5, 1.3975195) -- (37.600002, 1.3970995) -- (37.7, 1.3966914) -- (37.8, 1.3963288) -- (37.9, 1.3959202) -- (38.0, 1.3954954) -- (38.100002, 1.3950489) -- (38.2, 1.3946474) -- (38.3, 1.3941936) -- (38.4, 1.393744) -- (38.5, 1.3932564) -- (38.600002, 1.392828) -- (38.7, 1.392361) -- (38.8, 1.3918829) -- (38.9, 1.3913823) -- (39.0, 1.3909029) -- (39.100002, 1.3904314) -- (39.2, 1.3899168) -- (39.3, 1.3893946) -- (39.4, 1.3888782) -- (39.5, 1.3883923) -- (39.600002, 1.3878777) -- (39.7, 1.3873241) -- (39.8, 1.3867596) -- (39.9, 1.3862904) -- (40.0, 1.3857378) -- (40.100002, 1.3851843) -- (40.2, 1.3845829) -- (40.3, 1.3841047) -- (40.4, 1.3835491) -- (40.5, 1.3829912) -- (40.600002, 1.382444) -- (40.7, 1.3818866) -- (40.8, 1.3813446) -- (40.9, 1.3807561) -- (41.0, 1.3801849) -- (41.100002, 1.3796057) -- (41.2, 1.3790268) -- (41.3, 1.3784341) -- (41.4, 1.3778071) -- (41.5, 1.3771367) -- (41.600002, 1.3765395) -- (41.7, 1.3758817) -- (41.8, 1.375233) -- (41.9, 1.374564) -- (42.0, 1.3738922) -- (42.100002, 1.3732136) -- (42.2, 1.3725085) -- (42.3, 1.3725936) -- (42.4, 1.3733519) -- (42.5, 1.3741536) -- (42.600002, 1.3748757) -- (42.7, 1.3756611) -- (42.8, 1.3764514) -- (42.9, 1.3771814) -- (43.0, 1.3778768) -- (43.100002, 1.3785001) -- (43.2, 1.379054) -- (43.3, 1.3795592) -- (43.4, 1.3799738) -- (43.5, 1.3803325) -- (43.600002, 1.3805512) -- (43.7, 1.3807348) -- (43.8, 1.3808668) -- (43.9, 1.3808775) -- (44.0, 1.3808509) -- (44.100002, 1.380742) -- (44.2, 1.380534) -- (44.3, 1.3802559) -- (44.4, 1.3798537) -- (44.5, 1.3793696) -- (44.600002, 1.3788875) -- (44.7, 1.3783973) -- (44.8, 1.3779298) -- (44.9, 1.3774676) -- (45.0, 1.3770174) -- (45.100002, 1.3765639) -- (45.2, 1.3761486) -- (45.3, 1.3757437) -- (45.4, 1.3753219) -- (45.5, 1.374895) -- (45.600002, 1.3744572) -- (45.7, 1.3740023) -- (45.8, 1.3735098) -- (45.9, 1.3730174) -- (46.0, 1.3724836) -- (46.100002, 1.3719414) -- (46.2, 1.3713694) -- (46.3, 1.370757) -- (46.4, 1.3701428) -- (46.5, 1.3695207) -- (46.600002, 1.3688761) -- (46.7, 1.3682396) -- (46.8, 1.3675791) -- (46.9, 1.3669109) -- (47.0, 1.3661989) -- (47.100002, 1.3654703) -- (47.2, 1.3647102) -- (47.3, 1.3639225) -- (47.4, 1.3630615) -- (47.5, 1.3621601) -- (47.600002, 1.3611673) -- (47.7, 1.360215) -- (47.8, 1.3592762) -- (47.9, 1.3583055) -- (48.0, 1.3573684) -- (48.100002, 1.3564041) -- (48.2, 1.3554698) -- (48.3, 1.3545142) -- (48.4, 1.3535844) -- (48.5, 1.3526499) -- (48.600002, 1.3517369) -- (48.7, 1.3508571) -- (48.8, 1.3499678) -- (48.9, 1.3490952) -- (49.0, 1.3482392) -- (49.100002, 1.3474295) -- (49.2, 1.3465866) -- (49.3, 1.3457913) -- (49.4, 1.3449957) -- (49.5, 1.3442082) -- (49.600002, 1.3434213) -- (49.7, 1.3426384) -- (49.8, 1.3418727) -- (49.9, 1.3410919) -- (50.0, 1.340326) -- (50.100002, 1.3395662) -- (50.2, 1.3387843) -- (50.3, 1.338056) -- (50.4, 1.3372833) -- (50.5, 1.3365347) -- (50.600002, 1.3357819) -- (50.7, 1.3350037) -- (50.8, 1.3342818) -- (50.9, 1.3335341) -- (51.0, 1.332827) -- (51.100002, 1.3320942) -- (51.2, 1.3313576) -- (51.3, 1.3306619) -- (51.4, 1.3301152) -- (51.5, 1.3295398) -- (51.600002, 1.3289961) -- (51.7, 1.3284357) -- (51.8, 1.3278728) -- (51.9, 1.3272771) -- (52.0, 1.3266652) -- (52.100002, 1.3260568) -- (52.2, 1.3254577) -- (52.3, 1.324851) -- (52.4, 1.3253603) -- (52.5, 1.3316854) -- (52.600002, 1.3380296) -- (52.7, 1.3442879) -- (52.8, 1.3504696) -- (52.9, 1.3566768) -- (53.0, 1.3629198) -- (53.100002, 1.3690106) -- (53.2, 1.3751527) -- (53.3, 1.3811355) -- (53.4, 1.3872403) -- (53.5, 1.3931829) -- (53.600002, 1.3921008) -- (53.7, 1.3909962) -- (53.8, 1.3899468) -- (53.9, 1.3888474) -- (54.0, 1.3878181) -- (54.100002, 1.3867658) -- (54.2, 1.3857317) -- (54.3, 1.3846822) -- (54.4, 1.3836282) -- (54.5, 1.3826073) -- (54.600002, 1.381587) -- (54.7, 1.3805572) -- (54.8, 1.3795675) -- (54.9, 1.3785415) -- (55.0, 1.3775566) -- (55.100002, 1.3765447) -- (55.2, 1.3755543) -- (55.3, 1.3745939) -- (55.4, 1.3735942) -- (55.5, 1.3726374) -- (55.600002, 1.371653) -- (55.7, 1.3707072) -- (55.8, 1.3697724) -- (55.9, 1.3688188) -- (56.0, 1.3678793) -- (56.100002, 1.3669143) -- (56.2, 1.3659949) -- (56.3, 1.365091) -- (56.4, 1.364151) -- (56.5, 1.3632315) -- (56.600002, 1.3623267) -- (56.7, 1.3614157) -- (56.8, 1.3605146) -- (56.9, 1.3596121) -- (57.0, 1.3587174) -- (57.100002, 1.3578498) -- (57.2, 1.3569566) -- (57.3, 1.35609) -- (57.4, 1.3552125) -- (57.5, 1.3543468) -- (57.600002, 1.3535006) -- (57.7, 1.3526254) -- (57.8, 1.3517698) -- (57.9, 1.3509252) -- (58.0, 1.35007) -- (58.100002, 1.3492275) -- (58.2, 1.3483869) -- (58.3, 1.347565) -- (58.4, 1.3467475) -- (58.5, 1.3459039) -- (58.600002, 1.3451033) -- (58.7, 1.3442873) -- (58.8, 1.3434981) -- (58.9, 1.342693) -- (59.0, 1.3418812) -- (59.100002, 1.3411028) -- (59.2, 1.3403058) -- (59.3, 1.339531) -- (59.4, 1.3387375) -- (59.5, 1.3379495) -- (59.600002, 1.3371952) -- (59.7, 1.3364197) -- (59.8, 1.3356351) -- (59.9, 1.3348788) -- (60.0, 1.3341246) -- (60.100002, 1.3333921) -- (60.2, 1.3326203) -- (60.3, 1.3318775) -- (60.4, 1.3311399) -- (60.5, 1.3303984) -- (60.600002, 1.3296739) -- (60.7, 1.328929) -- (60.8, 1.3282032) -- (60.9, 1.3274869) -- (61.0, 1.3267817) -- (61.100002, 1.3260556) -- (61.2, 1.3253481) -- (61.3, 1.3246473) -- (61.4, 1.3239499) -- (61.5, 1.323239) -- (61.600002, 1.3225411) -- (61.7, 1.3218564) -- (61.8, 1.3211755) -- (61.9, 1.32048) -- (62.0, 1.3198009) -- (62.100002, 1.31912) -- (62.2, 1.3184584) -- (62.3, 1.317779) -- (62.4, 1.3171109) -- (62.5, 1.3164507) -- (62.600002, 1.3157867) -- (62.7, 1.3151274) -- (62.8, 1.3144679) -- (62.9, 1.3138195) -- (63.0, 1.3131846) -- (63.100002, 1.3125373) -- (63.2, 1.3118876) -- (63.3, 1.3112506) -- (63.4, 1.3106197) -- (63.5, 1.309997) -- (63.600002, 1.3093585) -- (63.7, 1.3087325) -- (63.8, 1.30811) -- (63.9, 1.3075007) -- (64.0, 1.3068775) -- (64.1, 1.3062619) -- (64.200005, 1.3056667) -- (64.3, 1.3050553) -- (64.4, 1.3044548) -- (64.5, 1.3038456) -- (64.6, 1.3032484) -- (64.700005, 1.3026671) -- (64.8, 1.3020784) -- (64.9, 1.3014843) -- (65.0, 1.3008851) -- (65.1, 1.3003136) -- (65.200005, 1.2997426) -- (65.3, 1.2991427) -- (65.4, 1.2985831) -- (65.5, 1.2980034) -- (65.6, 1.29744) -- (65.700005, 1.296873) -- (65.8, 1.2963012) -- (65.9, 1.2957554) -- (66.0, 1.2951952) -- (66.1, 1.2946357) -- (66.200005, 1.2940813) -- (66.3, 1.2935269) -- (66.4, 1.2929901) -- (66.5, 1.2924427) -- (66.6, 1.2918996) -- (66.700005, 1.2913551) -- (66.8, 1.2908219) -- (66.9, 1.2902945) -- (67.0, 1.2897507) -- (67.1, 1.2892207) -- (67.200005, 1.2889885) -- (67.3, 1.2896656) -- (67.4, 1.2904855) -- (67.5, 1.2914795) -- (67.6, 1.2925829) -- (67.700005, 1.293698) -- (67.8, 1.2947314) -- (67.9, 1.2956865) -- (68.0, 1.296563) -- (68.1, 1.2973607) -- (68.200005, 1.2980765) -- (68.3, 1.2987392) -- (68.4, 1.2993288) -- (68.5, 1.2998478) -- (68.6, 1.3002881) -- (68.700005, 1.3006307) -- (68.8, 1.3009144) -- (68.9, 1.3011869) -- (69.0, 1.3014145) -- (69.1, 1.301586) -- (69.200005, 1.3017114) -- (69.3, 1.3018155) -- (69.4, 1.3018769) -- (69.5, 1.3018637) -- (69.6, 1.3017566) -- (69.700005, 1.3015616) -- (69.8, 1.301284) -- (69.9, 1.3008918) -- (70.0, 1.3003957) -- (70.1, 1.2997826) -- (70.200005, 1.2990686) -- (70.3, 1.2983389) -- (70.4, 1.2975988) -- (70.5, 1.296872) -- (70.6, 1.2961563) -- (70.700005, 1.2954315) -- (70.8, 1.294718) -- (70.9, 1.293985) -- (71.0, 1.2932806) -- (71.1, 1.2925818) -- (71.200005, 1.2918642) -- (71.3, 1.2911795) -- (71.4, 1.2904761) -- (71.5, 1.2897903) -- (71.6, 1.2890862) -- (71.700005, 1.2883877) -- (71.8, 1.2877139) -- (71.9, 1.2870258) -- (72.0, 1.286363) -- (72.1, 1.2856804) -- (72.200005, 1.2850055) -- (72.3, 1.284336) -- (72.4, 1.283655) -- (72.5, 1.2829944) -- (72.6, 1.2823364) -- (72.700005, 1.2816885) -- (72.8, 1.281026) -- (72.9, 1.2803615) -- (73.0, 1.2797197) -- (73.1, 1.2790748) -- (73.200005, 1.2784413) -- (73.3, 1.2777995) -- (73.4, 1.2771586) -- (73.5, 1.2765268) -- (73.6, 1.2758952) -- (73.700005, 1.275283) -- (73.8, 1.2746469) -- (73.9, 1.2740366) -- (74.0, 1.273398) -- (74.1, 1.2727784) -- (74.200005, 1.2721761) -- (74.3, 1.2715629) -- (74.4, 1.2709621) -- (74.5, 1.2703584) -- (74.6, 1.2697554) -- (74.700005, 1.2691662) -- (74.8, 1.2685721) -- (74.9, 1.2679979) -- (75.0, 1.2674035) -- (75.1, 1.2668413) -- (75.200005, 1.2662822) -- (75.3, 1.2657145) -- (75.4, 1.2651883) -- (75.5, 1.2646241) -- (75.6, 1.2641244) -- (75.700005, 1.263624) -- (75.8, 1.2631396) -- (75.9, 1.2626381) -- (76.0, 1.2621547) -- (76.1, 1.2616569) -- (76.200005, 1.2611746) -- (76.3, 1.2606977) -- (76.4, 1.2602204) -- (76.5, 1.2597516) -- (76.6, 1.259273) -- (76.700005, 1.2588058) -- (76.8, 1.2583364) -- (76.9, 1.2578787) -- (77.0, 1.2574291) -- (77.1, 1.2569672) -- (77.200005, 1.2565131) -- (77.3, 1.2560657) -- (77.4, 1.2556204) -- (77.5, 1.2551807) -- (77.6, 1.2547405) -- (77.700005, 1.2543014) -- (77.8, 1.2538657) -- (77.9, 1.2534432) -- (78.0, 1.2530159) -- (78.1, 1.2525934) -- (78.200005, 1.2521783) -- (78.3, 1.251759) -- (78.4, 1.2513593) -- (78.5, 1.2509574) -- (78.6, 1.250566) -- (78.700005, 1.250188) -- (78.8, 1.2497997) -- (78.9, 1.2494254) -- (79.0, 1.2490588) -- (79.1, 1.2487087) -- (79.200005, 1.2483615) -- (79.3, 1.2480155) -- (79.4, 1.2476866) -- (79.5, 1.2473592) -- (79.6, 1.2470424) -- (79.700005, 1.2467307) -- (79.8, 1.246426) -- (79.9, 1.2461361) -- (80.0, 1.245831) -- (80.1, 1.2455543) -- (80.200005, 1.2452581) -- (80.3, 1.2449831) -- (80.4, 1.244704) -- (80.5, 1.2444259) -- (80.6, 1.244167) -- (80.700005, 1.2438928) -- (80.8, 1.2436363) -- (80.9, 1.2433844) -- (81.0, 1.2432216) -- (81.1, 1.2431315) -- (81.200005, 1.2430354) -- (81.3, 1.2429616) -- (81.4, 1.2428991) -- (81.5, 1.2428435) -- (81.6, 1.2428055) -- (81.700005, 1.2427466) -- (81.8, 1.2427217) -- (81.9, 1.2426938) -- (82.0, 1.2426757) -- (82.1, 1.2428042) -- (82.200005, 1.2431146) -- (82.3, 1.2434275) -- (82.4, 1.2437632) -- (82.5, 1.2440889) -- (82.6, 1.2444426) -- (82.700005, 1.2448542) -- (82.8, 1.2454506) -- (82.9, 1.2463874) -- (83.0, 1.2476128) -- (83.1, 1.2493535) -- (83.200005, 1.251142) -- (83.3, 1.2529289) -- (83.4, 1.2547046) -- (83.5, 1.2564876) -- (83.6, 1.2582597) -- (83.700005, 1.2600158) -- (83.8, 1.2617561) -- (83.9, 1.2635309) -- (84.0, 1.2652713) -- (84.1, 1.2670195) -- (84.200005, 1.2687318) -- (84.3, 1.2704527) -- (84.4, 1.2721906) -- (84.5, 1.2739052) -- (84.6, 1.2756032) -- (84.700005, 1.2772673) -- (84.8, 1.2789927) -- (84.9, 1.2806818) -- (85.0, 1.2823628);

\end{scope}

\draw [thick, red] (2.45, 1.9) circle (0.3);
\draw [thick, red] (7.4, 1.4) circle (0.3);

\end{tikzpicture}

%\caption{The maximum expected score on a standard board as a function of the radius of the disc where the dart might land. It starts with a plateau at 60 points where it remains as long as the disc fits entirely inside the treble-20 region. At 170mm, the entire board fits inside the disc with a score of about $12.8$. 
%There are at least seven places where this function temporarily increases. The two most obvious are circled in red: The first of these occurs at radius 33--36 mm when the disc becomes large enough to essentially contain both the double-20 and treble-20 regions, which is better than just containing most of the treble-20 and ``fat'' 20. The second is at radius 105--107 mm when the disc becomes large enough to contain the entire treble ring.  }
\caption{
Two of the at least seven places where this function increases are circled in red.}
\label{F:diagram}
\end{figure}
\end{center}

When the disc is small enough to fit completely inside the treble 20 region, this is of course where to aim, and the expected payoff is 60 points. 
When the radius increases beyond 4 mm this is no longer possible, but the best place to aim is still at treble 20. At radius about 16 mm, we can no longer keep the 
disc completely inside the 20 sector while at the same time maximizing its intersection with the treble ring. Here the best place to aim is somewhere in the ``fat 20'', 
a bit above the treble 20, making a compromise between including as much as possible of the treble 20 region while not having too much of the disc 
sticking out into the 5- and 1-sectors.   

At radius 33.6 mm, something interesting happens: The average score starts to increase as the radius of the disc increases further! This is because as 
the radius goes from about 33.6 to 35.7 mm, we can suddenly fit most of both treble 20 \emph{and} the double 20 into the disc. At radius 33.6 mm we 
score an average of 18.45 points, but with the larger radius 35.7 mm, the score increases to 18.60!

A little later, at radius 39 mm, the best place to aim suddenly jumps from the 20-sector to the 19-sector. At 43 mm there is another jump after which we 
should aim near treble 7, with fair chances of scoring 16 or 19 or even a high treble. 

As the radius increases, there are (at least!) five more places where $g_{_{X,f}}(d)$ increases. The most clear-cut of them 
is when the radius increases from 104.8 mm to 107 mm, which is where we can fit the entire treble ring into the disc. The last one occurs from radius 164 mm, 
when we only get half the double ring into the disc, to 170 mm, which is the radius of the whole dartboard. 

Lastly, we mention that there have been some studies for the standard dartboard with the dart being the normal distribution concerning what 
$g_{_{X,f}}(d)$ looks like and how one should play optimally (i.e., where one should aim). See \cite{TPT}  and  \url{http://datagenetics.com/blog/january12012/index.html}

\section{Background, Notation and some elementary results}  \label{section:background} 
\subsection{Various background}

First, the term \textbf{nondegenerate} will refer to any distribution which is not a single point mass.

\begin{definition}
We say that the random vectors $X$ and $Y$ have the same \textbf{type} if there exist 
$a>0$ and $b\in\mathbb{R}^n$ such that $Y$ and $aX+b$ have the same distribution.
We say that the functions $f$ and $h$ have the same \textbf{type} if there exist 
$a,c>0$, $b\in\mathbb{R}^n$, and $d\in\mathbb{R}$ such that
$h(x)=cf(ax+b)+d$ for all $x$. 
\end{definition}

If  $X$ and $Y$ have the same type and $f$ and $h$ have the same type,
then it will follow from Proposition \ref{proposition:basic_g_properties} below that $(X,f)$ is 
reasonable if and only if $(Y,h)$ is.

\begin{remark}
It is easy to check that $X|dX$ only depends on $X$'s type
and hence being selfdecomposable also only depends on the type of $X$.
\end{remark}

\begin{definition}
Let $X$ be a random vector taking values in $\mathbb{R}^n$. The \textbf{characteristic function} 
of $X$ is the function $\phi_X:\mathbb{R}^n\to \mathbb{C}$ defined by
\begin{equation}
    \phi_X(t)=E\mathrm{exp}(i(t\cdot X)).
\end{equation}
\end{definition}

Next we will recall the concepts of stable and infinitely divisible distributions. 

\begin{definition}
A random vector $X$ has a \textbf{stable distribution} if for independent copies $X_1$ and $X_2$ of $X$, 
and any $a,b>0$, there exist constants $c>0$ and $d\in\mathbb{R}^n$ such that $aX_1+bX_2$ is equal to 
$cX+d$ in distribution.
\end{definition}

\begin{definition}
A random vector $X$ has an \textbf{infinitely divisible} distribution if for all 
positive $m\in\mathbb{N}$, there exist $m$ independent identically distributed random vectors 
$Y_1,...,Y_m$ such that $\sum_{j=1}^nY_j$ has the same distribution as $X$. 
\end{definition}

It is known (as we stated earlier) that all stable distributions are selfdecomposable
(see \cite{sato1999levy}, p. 91), that all selfdecomposable distributions are infinitely divisible 
(see \cite{sato1999levy}, p. 93) and that on $\mathbb{R}$ 
all nondegenerate selfdecomposable distributions are absolutely continuous
(see \cite{sato1999levy}, p. 177) and unimodal (\cite{sato1999levy}, p. 404).

The following is a concept which is used to construct examples of darts with desired properties.

\begin{definition}
Let $X$ and $Y$ be darts taking values in $\mathbb{R}^n$ with laws $\mu_X$ and $\mu_Y$. A dart $Z$ taking values in $\mathbb{R}^n$ is a \textbf{convex combination} of $X$ and $Y$ if its law is given 
by $\mu_Z=p\mu_X+(1-p)\mu_Y$ for some $p\in[0,1]$. If $p\in (0,1)$, then $Z$ is called a \textbf{nontrivial convex combination} of $X$ and $Y$.
\end{definition}

Finally, let us define three function spaces which will be used in some of our theorems. We define $C_c(\mathbb{R}^n)\subseteq C_0(\mathbb{R}^n) \subseteq C_b(\mathbb{R}^n)$ 
by
\begin{equation} 
\label{eq:function.spaces}
\begin{aligned}
C_c(\mathbb{R}^n)&:=\big\{f:\mathbb{R}^n\to\mathbb{R} |\ f \text{ is continuous and has compact support}\}
\\
    C_0(\mathbb{R}^n)&:=\big\{f:\mathbb{R}^n\to\mathbb{R} |\ f \text{ is continuous and } \lim_{||x||\to\infty}f(x)=0\big\}
\\
    C_b(\mathbb{R}^n)&:=\big\{f:\mathbb{R}^n\to\mathbb{R} |\ f \text{ is continuous and bounded} \big\}
\end{aligned}
\end{equation}

The following lemma, whose proof is easy and left to the reader, will be used twice.

\begin{lemma}
\label{lemma:un.cont}
For any finite continuous measure $\mu$ on $\mathbb{R}^n$, one has
$$
\lim_{\delta \to 0} \sup_{x\in  \mathbb{R}} \mu(\{ y : \|x-y\|<\delta\})=0.
$$
\end{lemma}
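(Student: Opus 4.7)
The plan is to argue by contradiction. Suppose the conclusion fails, i.e.\ there is some $\eps>0$, a sequence $\delta_k\downarrow 0$ and a sequence of centers $x_k\in\mathbb{R}^n$ (I read the ``$\mathbb{R}$'' in the statement as a typo for $\mathbb{R}^n$) such that $\mu(B(x_k,\delta_k))\geq\eps$ for every $k$, where $B(x,r)=\{y:\|x-y\|<r\}$. The goal is to extract a cluster point of the centers $x_k$ and then use continuity of $\mu$ (no atoms) to derive a contradiction.

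First I would confine the $x_k$ to a bounded set. Since $\mu$ is finite, choose $R$ so large that $\mu(\{y:\|y\|>R\})<\eps/2$. For $k$ large enough that $\delta_k<1$, any center $x_k$ with $\|x_k\|>R+1$ would give $B(x_k,\delta_k)\subseteq\{y:\|y\|>R\}$, forcing $\mu(B(x_k,\delta_k))<\eps/2<\eps$, contradiction. Hence, discarding finitely many terms, $\|x_k\|\leq R+1$ for all $k$, and by Bolzano--Weierstrass a subsequence $x_{k_j}$ converges to some $x^*\in\mathbb{R}^n$.

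Next I would show $\mu$ has an atom at $x^*$. Fix any $\eta>0$. For $j$ large enough that $\|x_{k_j}-x^*\|<\eta/2$ and $\delta_{k_j}<\eta/2$, the triangle inequality gives $B(x_{k_j},\delta_{k_j})\subseteq B(x^*,\eta)$, so $\mu(B(x^*,\eta))\geq\mu(B(x_{k_j},\delta_{k_j}))\geq\eps$. Applying continuity of $\mu$ from above along the nested sequence $B(x^*,1/n)\downarrow\{x^*\}$ (valid because $\mu$ is finite) yields $\mu(\{x^*\})=\lim_{n\to\infty}\mu(B(x^*,1/n))\geq\eps>0$, contradicting the hypothesis that $\mu$ is continuous (i.e.\ atomless). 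This finishes the proof.

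There is no real obstacle; the only place that needs mild care is the passage from ``$\mu(B(x^*,\eta))\geq\eps$ for all $\eta>0$'' to ``$\mu(\{x^*\})\geq\eps$'', which uses that $\mu$ is finite so that continuity from above applies. No topological pathology occurs because $\mathbb{R}^n$ is locally compact and second countable, and one only uses elementary compactness in $\mathbb{R}^n$ plus the downward continuity of finite measures.
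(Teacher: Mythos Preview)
Your argument is correct. The paper itself leaves this lemma to the reader, so strictly speaking there is no proof to compare against; however, the discarded material later in the source contains a proof of a slightly more general version (allowing point masses), and for the continuous case that proof proceeds exactly as yours does: assume failure, extract a sequence of centers with uniformly large ball-mass, use finiteness of $\mu$ to confine the centers to a bounded set, pass to a convergent subsequence, and deduce an atom at the limit via downward continuity of $\mu$ on shrinking balls. Your write-up is essentially identical to that argument, with the same care taken at the continuity-from-above step.
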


\subsection{Basic properties of \emph{g}$_{_{\textbf{\emph{X}},\textbf{\emph{f}}}}$}

We next give a few very simple observations concerning our functions $g_{_{X,f}}(d)$.

\begin{proposition}\label{proposition:basic_g_properties}
Let $X$ and $Y$ be two independent darts taking values in $\mathbb{R}^n$, and let $f$ and $h$ be two payoff functions on $\mathbb{R}^n$. If $a_d,a_p,c_p>0$, $d_p \in\mathbb{R}$, and $b_d,b_p\in\mathbb{R}^n$, then the following statements hold
  
\begin{enumerate}
\item  $g_{_{a_dX+b_d,c_pf(a_px+b_p)+d_p}}(d)=c_pg_{_{X,f}}(a_da_pd)+d_p$
\item  $g_{_{X+Y,f}}(d)\leq g_{_{X,f}}(d)$
\item  $g_{_{X,f+h}}(d)\leq g_{_{X,f}}(d)+g_{_{X,h}}(d)$
\item $\inf_x (f(x))\leq g_{_{X,f}}(d) \leq \sup_x (f(x))$
\end{enumerate}
\end{proposition}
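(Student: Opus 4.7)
The plan is to prove each of the four items by direct unpacking of the definition $g_{X,f}(d)=\sup_a Ef(a+dX)$ combined with elementary properties of expectation and of suprema. None of the four should require any probabilistic machinery beyond the tower property and linearity of expectation.

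For item (1), let $Y=a_dX+b_d$ and $h(x)=c_pf(a_px+b_p)+d_p$. I would write
\[
g_{Y,h}(d)=\sup_{a\in\mathbb{R}^n} E\!\left[c_p f\!\left(a_p(a+d(a_dX+b_d))+b_p\right)+d_p\right],
\]
pull out the constants $c_p$ and $d_p$ (noting $c_p>0$ so the supremum is unaffected in sign), and collect the non-random shift inside $f$ as $a_p a+a_p d\, b_d+b_p$. Since $a_p>0$, this map is a bijection of $\mathbb{R}^n$ onto itself, so reparametrizing the supremum by $a'=a_p a+a_p d b_d+b_p$ converts the expression into $c_p\sup_{a'}Ef(a'+a_da_p d\,X)+d_p=c_pg_{X,f}(a_da_pd)+d_p$.

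For item (2), I would condition on $Y$: for each fixed value $y$ of $Y$,
\[
E\!\left[f(a+dX+dy)\,\middle|\,Y=y\right]\leq \sup_{b\in\mathbb{R}^n}Ef(b+dX)=g_{X,f}(d),
\]
because $a+dy$ is a particular choice of the shift $b$. Taking expectation over $Y$ (using independence so the conditional distribution of $X$ is unchanged) gives $Ef(a+d(X+Y))\le g_{X,f}(d)$, and then taking the supremum over $a$ yields the claim. For item (3), by linearity $Ef(a+dX)+Eh(a+dX)\le g_{X,f}(d)+g_{X,h}(d)$ for every $a$, so the left side, which equals $E(f+h)(a+dX)$, is bounded by the same constant, and taking $\sup_a$ preserves this bound. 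For item (4), since $\inf_x f(x)\le f(a+dX)\le \sup_x f(x)$ pointwise (and $\sup f<\infty$ by the standing assumption that payoff functions are bounded above), the same inequalities persist after taking expectation and then $\sup_a$.

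I do not anticipate any real obstacle: the only point requiring a moment of care is the bijectivity of the affine reparametrization in (1), which needs $a_p>0$, and the use of independence in (2) to ensure that conditioning on $Y$ leaves the distribution of $X$ (and hence of $a+dy+dX$) intact. Everything else is definition-chasing.
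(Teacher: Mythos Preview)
Your proposal is correct and follows essentially the same approach as the paper: item (1) by unfolding the definition, extracting the affine constants, and reparametrizing the supremum via the bijection $a\mapsto a_pa+a_pdb_d+b_p$; item (2) by integrating out $Y$ using independence and bounding the inner expectation by $g_{X,f}(d)$; items (3) and (4) by linearity and pointwise bounds, which the paper simply declares ``easily shown.''
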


\begin{proof}

This proof only requires some simple straightforward computations.

1. We compute
\begin{equation}
    \begin{aligned}
    g_{_{a_dX+b_d,c_pf(a_px+b_p)+d_p}}(d)
        &=
        \sup_a E\left[c_pf\left(a_p(a+d(a_dX+b_d))+b_p\right)+d_p\right]
        \\&=
        c_p(
        \sup_a E\left[f\left((a_pa+a_pdb_d+b_p)+a_da_pdX\right)\right])+d_p
        \\&=
        c_p(\sup_a E\left[f\left(a+a_da_pdX\right)\right])+d_p
        = c_pg_{_{X,f}}(a_da_pd)+d_p.
    \end{aligned}
\end{equation}
2. Due to the independence of $X$ and $Y$ we have that for any $a\in\mathbb{R}^n$
\begin{equation}
\begin{aligned}
E f(a+dX+dY)
&=
\int Ef(a+dy+dX)\mathrm{d}\mu_Y(y)
\\&\leq 
\int \sup_a Ef(a+dX)\mathrm{d}\mu_Y(y)
\\&=
\sup_a Ef(a+dX)=g_{_{X,f}}(d)
\end{aligned}
\end{equation}
and thus
\begin{equation}
g_{_{X+Y,f}}(d)
    =\sup_a \E{f(a+dX+dY)}\leq g_{_{X,f}}(d).
\end{equation}
3 and 4 are easily shown. 
\end{proof}

\subsection{Behavior of reasonableness under projections}
In this subsection, we prove a fairly straightforward result concerning the relationship
between the behavior of a dart in $\mathbb{R}^n$
and the behavior of its various projections  with respect to our questions. 

\begin{proposition}\label{proposition:projections}
Let $X$ be a dart taking values in $\mathbb{R}^n$, and let $h$ be a nonzero linear function 
from $\mathbb{R}^n$ to $\mathbb{R}$.  If
$f$ is a payoff function on $\mathbb{R}$,  then
$g_{_{h(X),f}}(d)=  g_{_{X,f\circ h}}(d)$ so that
$(h(X),f)$ is reasonable if and only if $(X,f\circ h)$ is reasonable.
Hence if $X$ is reasonable, then $h(X)$ is reasonable. 
\end{proposition}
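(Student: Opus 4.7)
The plan is to compute $g_{_{X,f\circ h}}(d)$ directly by exploiting the linearity of $h$ and then rewrite the supremum as one over $\mathbb{R}$ rather than over $\mathbb{R}^n$. First I would check that $f\circ h$ is a legitimate payoff function: $h$ is continuous (being linear), so $f \circ h$ is measurable, and $\sup_x (f\circ h)(x) \leq \sup_y f(y) < \infty$, so $f\circ h$ is bounded from above. Thus $g_{_{X,f\circ h}}(d)$ is well-defined.

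Next, for any $a\in\mathbb{R}^n$, linearity of $h$ gives $h(a+dX)=h(a)+d\,h(X)$, so
\begin{equation}
E\bigl[(f\circ h)(a+dX)\bigr]=E\bigl[f(h(a)+d\,h(X))\bigr].
\end{equation}
Therefore
\begin{equation}
g_{_{X,f\circ h}}(d)=\sup_{a\in\mathbb{R}^n}E\bigl[f(h(a)+d\,h(X))\bigr].
\end{equation}

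The key step is to observe that because $h$ is a nonzero linear map $\mathbb{R}^n\to\mathbb{R}$, its image is all of $\mathbb{R}$ (pick any $v$ with $h(v)\neq 0$; then $\{h(tv):t\in\mathbb{R}\}=\mathbb{R}$). So as $a$ ranges over $\mathbb{R}^n$, $b:=h(a)$ ranges over all of $\mathbb{R}$, and the supremum above equals $\sup_{b\in\mathbb{R}}E[f(b+d\,h(X))]=g_{_{h(X),f}}(d)$. This establishes the equality of the two $g$-functions.

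The remaining consequences are immediate. Since $g_{_{h(X),f}}(d)=g_{_{X,f\circ h}}(d)$ as functions of $d$, one is decreasing iff the other is, proving the ``reasonable iff reasonable'' equivalence. For the last sentence, if $X$ is reasonable, then $(X,g)$ is reasonable for every payoff function $g$; applying this with $g=f\circ h$ (which we already checked is a payoff function) yields that $(h(X),f)$ is reasonable for every payoff function $f$, i.e., $h(X)$ is reasonable. There is no real obstacle here beyond verifying surjectivity of $h$ and that $f\circ h$ is a valid payoff function; everything else is a one-line computation using linearity.
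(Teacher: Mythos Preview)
Your proof is correct and follows essentially the same approach as the paper: use linearity of $h$ to write $h(a+dX)=h(a)+d\,h(X)$, then use surjectivity of the nonzero linear map $h$ to convert the supremum over $\mathbb{R}^n$ into a supremum over $\mathbb{R}$. You add the (worthwhile) explicit verification that $f\circ h$ is a valid payoff function, but otherwise the arguments are identical.
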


\begin{proof}
We have
$$
   g_{_{h(X),f}}(d)    =     \sup_{a\in\mathbb{R}} Ef(a+dh(X))=
    \sup_{b\in\mathbb{R}^n} Ef(h(b)+dh(X))=
$$
$$
     \sup_{b\in\mathbb{R}^n} Ef(h(b+dX))
    =    \sup_{b\in\mathbb{R}^n} E(f\circ h)(b+dX))
    =    g_{_{X,f\circ h}}(d)
 $$
 where  $h$ being onto was used in the second equality.
The second statement follows immediately.
\end{proof}

\section{Improvement of payoff functions} \label{section:improvement} 
In this section, we will obtain a number of results showing that if a dart is
not reasonable against a certain payoff function, then it will be
not reasonable against a payoff function with perhaps nicer properties. 
Some of these will be used later in the paper.

\subsection{From nonreasonable bounded payoff functions to nonnegative compact support}

\begin{proposition}
\label{proposition:compactPayoff}
Let $X$ be a dart taking values in $\mathbb{R}^n$, and let $f$ be a bounded payoff 
function on $\mathbb{R}^n$. If $(X,f)$ is not reasonable, then there exists a bounded nonnegative payoff function $f'$ with compact support such that $(X,f')$ is not reasonable. If $f$ is continuous, then $f'$ can be taken to be continuous. 
\end{proposition}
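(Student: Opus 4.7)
The plan is a localization argument. Since $(X,f)$ is not reasonable, there exist $0<d_1<d_2$ and $\delta>0$ with $\g{X,f}(d_1)+\delta<\g{X,f}(d_2)$; the goal is to carve out of $f$ a bounded, nonnegative, compactly supported payoff function $f'$ that retains enough of this gap to remain nonreasonable. First, set $M:=\sup_x|f(x)|<\infty$ and replace $f$ by $f+M$. By Proposition~\ref{proposition:basic_g_properties}(1) this only shifts $\g{X,f}(\cdot)$ by the constant $M$, so $(X,f)$ is still not reasonable, and continuity (if present) is preserved. Hence I may assume $0\leq f\leq 2M$.

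Next, I would fix $a_2\in\mathbb{R}^n$ with $Ef(a_2+d_2X)>\g{X,f}(d_2)-\delta/3$, pick $R>0$ so large that $2M\cdot P(\|X\|>R)<\delta/3$ (possible by tightness of $X$), and let $\psi:\mathbb{R}^n\to[0,1]$ be a continuous cutoff equal to $1$ on $B(0,d_2R)$ and vanishing outside $B(0,2d_2R)$, e.g.\ piecewise linear in the annulus. Define $f'(x) := f(x)\,\psi(x-a_2)$. Then $f'$ is bounded, nonnegative, supported in the compact set $\overline{B(a_2,2d_2R)}$, and continuous whenever $f$ is.

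To verify nonreasonableness of $(X,f')$, aim at $a_2$ for the distance $d_2$: since $\psi(d_2X)=1$ on $\{\|X\|\leq R\}$ and $f\geq 0$,
$$
Ef'(a_2+d_2X) \;\geq\; E\bigl[f(a_2+d_2X)\mathbf{1}_{\{\|X\|\leq R\}}\bigr] \;\geq\; Ef(a_2+d_2X)-2M\cdot P(\|X\|>R) \;>\; \g{X,f}(d_2)-\tfrac{2\delta}{3},
$$
so $\g{X,f'}(d_2)>\g{X,f}(d_2)-2\delta/3$. On the other hand, $0\leq f'\leq f$ pointwise forces $\g{X,f'}(d_1)\leq \g{X,f}(d_1)<\g{X,f}(d_2)-\delta$. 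Subtracting yields $\g{X,f'}(d_2)-\g{X,f'}(d_1)>\delta/3>0$, so $(X,f')$ is not reasonable.

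The main obstacle, and what defeats the naive idea of simply multiplying $f$ by the indicator of a fixed compact set, is that the supremum over aim points $a$ can translate the dart's mass arbitrarily far, so any fixed cutoff destroys information about $\g{X,f}$. Anchoring the cutoff at a near-optimal aim point $a_2$ for the larger distance $d_2$, and exploiting the tightness of $X$, circumvents this; the $d_1$ inequality comes for free because truncating from above can only lower $g$.
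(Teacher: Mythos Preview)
Your proof is correct and follows essentially the same idea as the paper's: make $f$ nonnegative by adding a constant, then multiply by a continuous cutoff to obtain compact support while preserving the gap $\g{X,f}(d_1)<\g{X,f}(d_2)$. The paper's presentation differs only cosmetically: it uses a one-parameter family of origin-centered cutoffs $f_B(x)=f(x)h_B(\|x\|)$ and a monotone-convergence limit argument to get $\lim_{B\to\infty}\g{X,f_B}(d)=\g{X,f}(d)$ for every $d$, then notes that some $f_B$ must be nonreasonable; you instead construct a single cutoff of explicit radius, anchored at a near-optimal aim point $a_2$, and bound the two values directly.

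One small remark: your closing paragraph overstates the obstacle. An origin-centered cutoff (as in the paper) works just as well, since all that is needed is that the cutoff be wide enough to cover $a_2+d_2X$ with high probability; the center is irrelevant once the radius is allowed to be large. So anchoring at $a_2$ is a convenient choice but not a necessary one.
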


\begin{proof}
As $f$ is bounded from below, we may assume that $f\geq 0$.
For any $B>0$ we can define a function $h_B:[0,\infty)\to[0,\infty)$ to be 1 for $x\leq B$, $0$ for 
$x\geq B+1$ and linearly in between, making it continuous.
From this we define a payoff function $f_B$ by
\begin{equation}
    f_B(x):=f(x)\cdot h_B(||x||).
\end{equation}
Note that $f\geq f_B\geq 0$, and that if $f$ is continuous, then so is $f_B$. 
By the monotone convergence theorem we have that 
\begin{equation}
   \lim_{B\to\infty} Ef_B(a+dX)= Ef(a+dX),\ \forall a\in\mathbb{R}^n,\ d>0
\end{equation}
which easily yields
\begin{equation}
\lim_{B\to\infty} \g{X,f_B}(d)\geq \g{X,f}(d)
\end{equation}
for all $d$. Since the reverse inequality is trivial, we obtain
\begin{equation}
    \lim_{B\to\infty} \g{X,f_B}(d)= \g{X,f}(d),\ \forall d>0.
\end{equation}
Some $(X,f_B)$ must not be reasonable since otherwise
$\g{X,f_B}$ would be decreasing in $d$ for all $B$ implying that
$\g{X,f}$ is decreasing, a contradiction.
\end{proof}

A variant of the proof of Proposition \ref{proposition:compactPayoff}
explains why we have defined our payoff functions to be bounded from above.

\begin{proposition}
\label{proposition:bded.from.above}
Let $X$ be a dart taking values in $\mathbb{R}^n$. Let $f:\mathbb{R}^n\to \mathbb{R}$. 
(Note $f$ is not assumed to be bounded above and so it is not necessarily a payoff function.)
Assume that $Ef(a+dX)$ is well defined and finite for all  $a\in\mathbb{R}^n,\ d>0$.
Letting, as we do for payoff functions,
\begin{equation}
g_{_{X,f}}(d):=\sup_{a\in\mathbb{R}^n} Ef(a+dX),
\end{equation}
assume that there exist $d_1< d_2$ so that $g_{_{X,f}}(d_1) < g_{_{X,f}}(d_2)$. 
Then there exists a payoff function $f'$ (hence bounded from above by definition)
so that $(X,f')$ is not reasonable.  If $f$ is continuous, then $f'$ may be taken to be continuous. 
\end{proposition}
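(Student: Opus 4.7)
The plan is to mirror the proof of Proposition \ref{proposition:compactPayoff}, but instead of truncating $f$ spatially I would truncate it from above in value. Concretely, for $B>0$ define the capped function $f_B(x):=\min\{f(x),B\}$. Each $f_B$ is measurable, bounded above by $B$ (so it is a bona fide payoff function), inherits continuity from $f$, and satisfies $f_B\nearrow f$ pointwise as $B\to\infty$.

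The first step is to show $Ef_B(a+dX)\to Ef(a+dX)$ for each fixed $a\in\mathbb{R}^n$ and $d>0$. Since $f_B\leq f$ and, for $B\geq 0$, $f_B^-\leq f^-$ pointwise, the hypothesis that $Ef(a+dX)$ is finite forces both $Ef^-(a+dX)<\infty$ and $Ef_B^-(a+dX)<\infty$. Applying the monotone convergence theorem to the nonnegative increasing sequence $f_B+f^-$ and then subtracting the finite quantity $Ef^-(a+dX)$ yields the desired convergence.

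The second step is to upgrade this pointwise convergence to $g_{_{X,f_B}}(d)\to g_{_{X,f}}(d)$ for every $d>0$. The inequality $g_{_{X,f_B}}(d)\leq g_{_{X,f}}(d)$ is immediate from $f_B\leq f$. For the reverse direction, given any $M<g_{_{X,f}}(d)$, choose $a^*$ with $Ef(a^*+dX)>M$; by step one, $Ef_B(a^*+dX)>M$ for all sufficiently large $B$, so $g_{_{X,f_B}}(d)\geq Ef_B(a^*+dX)>M$. This argument works uniformly whether $g_{_{X,f}}(d)$ is finite or $+\infty$.

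Finally, from $g_{_{X,f}}(d_1)<g_{_{X,f}}(d_2)$: if both values are finite, choose $B$ so large that $|g_{_{X,f_B}}(d_i)-g_{_{X,f}}(d_i)|<\tfrac{1}{3}(g_{_{X,f}}(d_2)-g_{_{X,f}}(d_1))$ for $i=1,2$; if instead $g_{_{X,f}}(d_2)=+\infty$ (forcing $g_{_{X,f}}(d_1)<\infty$), choose $B$ with $g_{_{X,f_B}}(d_2)>g_{_{X,f}}(d_1)+1$, noting $g_{_{X,f_B}}(d_1)\leq g_{_{X,f}}(d_1)$. Either way $g_{_{X,f_B}}(d_1)<g_{_{X,f_B}}(d_2)$, so $(X,f_B)$ is not reasonable; and $f_B$ is continuous whenever $f$ is. The only real subtlety is the second step, since in general a pointwise limit of suprema need not equal the supremum of the limit; but the one-sided bound via a single test point $a^*$ combined with $f_B\leq f$ trivializes it, and the rest is bookkeeping modelled on Proposition \ref{proposition:compactPayoff}.
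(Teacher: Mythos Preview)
Your proof is correct and follows essentially the same approach as the paper: truncate $f$ from above by setting $f_B=\min\{f,B\}$, show $g_{_{X,f_B}}(d)\to g_{_{X,f}}(d)$ for each $d$, and deduce that some $f_B$ witnesses nonreasonableness. The only cosmetic differences are that the paper invokes dominated convergence (with dominator $|f|$) where you use monotone convergence on $f_B+f^-$, and the paper dispatches the final step by the one-line contrapositive ``if every $(X,f_B)$ were reasonable then $g_{_{X,f}}=\lim_B g_{_{X,f_B}}$ would be decreasing,'' which also covers the case $g_{_{X,f}}(d_2)=+\infty$ without the separate bookkeeping you wrote out.
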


\begin{proof} 
For $M>0$, let $ f_M(x)=\max\{f(x), M\}$ which is continuous if
$f$ is.
By the Lebesgue dominated convergence theorem,
\begin{equation}
   \lim_{M\to\infty} Ef_M(a+dX)= Ef(a+dX),\ \forall a\in\mathbb{R}^n,\ d>0
\end{equation}
easily leading to
\begin{equation}
    \lim_{M\to\infty} \g{X,f_M}(d)= \g{X,f}(d),\ \forall d>0.
\end{equation}
Exactly as in the proof of \ref{proposition:compactPayoff},  we have that
for some $M$, $(X,f_M)$ is not reasonable.
\end{proof}

\subsection{Absolutely continuous darts: making noncontinuous payoff functions continuous}

\begin{proof}[Proof of Theorem \ref{theorem:absCont}]
By Proposition \ref{proposition:compactPayoff}, we can replace $f$ by a nonnegative bounded
function with compact support which is nonreasonable. 
Letting $m$ denote Lebesgue measure, by Lusin's Theorem (see \cite{folland1999real}, p. 217), for any $\epsilon>0$ there exists a measurable set $A\subseteq\mathbb{R}^n$ and a continuous nonnegative function 
$h_\epsilon\in C_c(\mathbb{R}^n)$, such that $f=h_\epsilon$ on 
$A$, $m(A^c)<\epsilon$, and $\sup_x |h_\epsilon (x)|\leq \sup_x |f(x)|$.

For any $d\geq 0$ we have that 
\begin{equation}
\begin{aligned}
    |Ef(a+dX)-Eh_\epsilon (a+dX)|
    &\leq\int_{\{x:a+dx\in A^c\}}|f(a+dx)-h_\epsilon(a+dx)|\mathrm{d}\mu_X(x)
    \\&\leq 
    2\sup_y |f(y)| \mu_X(\{x:a+dx\in A^c\})
\end{aligned}
\end{equation}
and thus 
\begin{equation}
    \begin{aligned}
        \g{X,h_\epsilon}(d)
        &\leq
        \sup_a \left(Ef(a+dX)+2\sup_y |f(y)| \mu_X(\{x:a+dx\in A^c\})\right)
        \\
        \g{X,h_\epsilon}(d)&\geq 
        \sup_a \left(Ef(a+dX)-2\sup_y |f(y)| \mu_X(\{x:a+dx\in A^c\})\right)
\end{aligned}
\end{equation}
If $(X,f)$ is not reasonable, then there exists $d_1,d_2\geq 0$ such that $d_1<d_2$ and $\g{X,f}(d_1)<\g{X,f}(d_2)$. As $\mu_X$ is an absolutely continuous finite measure there exists a $\delta>0$ such that for any measurable set $E$ with $m(E)<\delta$, we have that
\begin{equation}
\mu_X(E)<\frac{\g{X,f}(d_2)-\g{X,f}(d_1)}{8\sup_y |f(y)|}.    
\end{equation}
Now note that by the properties of Lebesgue measure, $m(\{x:a+dx\in A^c\})=m((A^c-a)/d)=m(A^c)/d^n<\epsilon/d^n$, and now choose $\epsilon>0$ so that $\epsilon/d_1^n<\delta$. For all
$d\geq d_1$ we now get
\begin{equation}
    \begin{aligned}
        \g{X,h_\epsilon}(d)
        &\leq
        \sup_a \left(Ef(a+dX)
        +2\sup_y |f(y)|\frac{\g{X,f}(d_2)-\g{X,f}(d_1)}{8\sup_y |f(y)|}\right)
        \\&=
        g_{_{X,f}}(d)+\frac{\g{X,f}(d_2)-\g{X,f}(d_1)}{4}
    \end{aligned}
\end{equation}
and
\begin{equation}
    \begin{aligned}
       \g{X,h_\epsilon}(d)&\geq 
        \sup_a \left(Ef(a+dX)-2\sup_y |f(y)| \frac{\g{X,f}(d_2)-\g{X,f}(d_1)}{8\sup_y |f(y)|}\right)
        \\&=
        g_{_{X,f}}(d)-\frac{\g{X,f}(d_2)-\g{X,f}(d_1)}{4}
    \end{aligned}
\end{equation}
 We now get that 
\begin{equation}
\begin{aligned}
    \g{X,h_\epsilon}(d_2)-\g{X,h_\epsilon}(d_1)
    &\geq
    \g{X,f}(d_2)-\g{X,f}(d_1)
    -\frac{\g{X,f}(d_2)-\g{X,f}(d_1)}{2}
    \\&=\frac{\g{X,f}(d_2)-\g{X,f}(d_1)}{2}>0
\end{aligned}
\end{equation}
and thus $(X,h_\epsilon)$ is not reasonable. 
\end{proof}

\subsection{Making payoff functions with a countable number of discontinuities continuous}

\begin{theorem}
\label{theorem:countablePayoff}
Assume that $X$ is a continuous dart taking values in $\mathbb{R}^n$ and $f$ is a bounded
payoff function on $\mathbb{R}^n$ such that $(X,f)$ is not reasonable. Then if $f$ has at most a countable number of discontinuities, 
then there exists a continuous nonnegative payoff function $h$ with compact support on $\mathbb{R}^n$ such that $(X,h)$ is not reasonable.
\end{theorem}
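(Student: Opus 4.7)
The plan is to first reduce to the case of a nonnegative, bounded, compactly supported payoff function (preserving the countable discontinuity set) via Proposition~\ref{proposition:compactPayoff}, and then to obtain a continuous approximation by mollification. Write $M := \sup_x|f(x)|$ and let $D$ denote the countable set of discontinuities of $f$. The key observation, enabled by the assumption that $X$ is continuous, is that
\[g_d(a) := Ef(a+dX)\]
is continuous in $a\in\mathbb{R}^n$ for every fixed $d > 0$. Indeed, if $a_n\to a$ then $f(a_n+dx)\to f(a+dx)$ for every $x$ with $a+dx\notin D$, that is, for every $x$ outside the countable set $(D-a)/d$. Since $\mu_X$ has no atoms, $\mu_X((D-a)/d)=0$, and dominated convergence (with majorant $M$) yields $g_d(a_n)\to g_d(a)$. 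Since $f$ has compact support and $\mu_X$ is tight, one checks similarly that $g_d(a)\to 0$ as $\|a\|\to\infty$, so in fact $g_d\in C_0(\mathbb{R}^n)$ and hence is uniformly continuous.

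Next, let $\psi_\eta$ be a standard nonnegative $C^\infty$ mollifier supported in the ball of radius $\eta$ with $\int\psi_\eta=1$, and set $h_\eta:=f*\psi_\eta$. Then $h_\eta$ is smooth, nonnegative, bounded by $M$, and compactly supported (in an $\eta$-thickening of $\support(f)$). Fubini's theorem gives
\[Eh_\eta(a+dX)=\int Ef\br{(a-z)+dX}\psi_\eta(z)\,dz=(g_d*\psi_\eta)(a),\]
and the uniform continuity of $g_d$ implies $\|g_d*\psi_\eta-g_d\|_\infty\to 0$ as $\eta\to 0$. Taking suprema in $a$ then yields $\g{X,h_\eta}(d)\to \g{X,f}(d)$ for every $d>0$.

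Finally, choose $d_1<d_2$ with $\g{X,f}(d_1)<\g{X,f}(d_2)$, available since $(X,f)$ is not reasonable. By the previous paragraph, for all sufficiently small $\eta>0$ the strict inequality $\g{X,h_\eta}(d_1)<\g{X,h_\eta}(d_2)$ also persists, and we may take $h:=h_\eta$ for any such $\eta$. The only real hurdle is establishing the continuity of $g_d$; this is precisely the point at which the continuity of $X$ (no atoms) and the countability of $D$ combine to wash out all of $f$'s discontinuities in the $\mu_X$-averaged sense. Once that is in hand, the rest of the argument is standard mollification.
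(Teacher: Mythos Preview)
Your argument is correct, and it takes a genuinely different route from the paper's.

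The paper proceeds by covering the countable discontinuity set $\{x_k\}$ by open balls of radii $\delta_k$ chosen (via Lemma~\ref{lemma:un.cont}) so that each scaled-and-translated ball has $\mu_X$-measure at most $\epsilon/2^k$; it then applies the Tietze extension theorem to $f$ restricted to the complement of this open cover to produce a continuous $h_\epsilon$, and bounds $|Ef(a+dX)-Eh_\epsilon(a+dX)|$ by $2\|f\|_\infty\epsilon$ uniformly in $a$ and in $d\ge d_1$. Your approach instead first observes that the map $a\mapsto g_d(a)=Ef(a+dX)$ already lies in $C_0(\mathbb{R}^n)$---the continuity being exactly the place where ``no atoms'' and ``$D$ countable'' are used---and then mollifies $f$ itself, using Fubini to rewrite $Eh_\eta(a+dX)=(g_d*\psi_\eta)(a)$ and the uniform continuity of $g_d$ to conclude.

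Each approach has something to recommend it. Yours yields a $C^\infty$ (not merely continuous) $h$ and is conceptually cleaner: once one sees that the averaging against $\mu_X$ already repairs the discontinuities at the level of $g_d$, the rest is standard approximation theory. The paper's argument is more hands-on and avoids any mollification or convolution machinery, relying only on Tietze and the elementary measure estimate of Lemma~\ref{lemma:un.cont}; it also makes the uniformity in $d\ge d_1$ explicit in a single stroke, whereas in your argument one treats $d_1$ and $d_2$ separately (which is of course sufficient). One small point worth stating explicitly in your write-up is that the reduction via Proposition~\ref{proposition:compactPayoff} does preserve the countability of the discontinuity set, since it only shifts $f$ by a constant and then multiplies by a continuous cutoff.
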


\begin{proof}
In view of Proposition \ref{proposition:compactPayoff}, it suffices to find an $h$ which is bounded and continuous which we now do.

If $(X,f)$ is not reasonable, there exist $d_1,d_2\geq 0$ such that $d_1<d_2$ and $\g{X,f}(d_1)<\g{X,f}(d_2)$. 
Let $\{x_k\}_{k=1}^\infty$ be the discontinuity points of $f$.

By Lemma \ref{lemma:un.cont}, we have that
for any $\epsilon>0$ there exists a sequence of positive numbers $\{\delta_k\}_{k=1}^\infty$ such that for all $k$ we have 
\begin{equation}
    \mu_X\left(\large\{y\in\mathbb{R}^n : ||x-y||<\frac{\delta_k}{d_1}\large\}\right)<\frac{\epsilon}{2^k},\ \forall x\in\mathbb{R}^n.
    \label{eq:choice_of_delta_n}
\end{equation}
Now define the set, which depends upon $\epsilon$
\begin{equation}
    A=\bigcup_{k=1}^\infty \{z\in\mathbb{R}^n : ||x_k-z||<\delta_k\}.
\end{equation}
Note that $A$ is an open set which contains all of the discontinuities of $f$. Thus $f$ is continuous on the closed set $A^c$, 
and therefore by the Tietze Extension Theorem, there exists a continuous function $h_\epsilon$ on $\mathbb{R}^n$ such that $h_\epsilon$ is equal to $f$ on $A^c$ and $\sup_x |h_\epsilon(x)|\leq \sup_x |f(x)|$.
We now have
\begin{equation*}
\begin{aligned}
    |Ef(a+dX)-Eh_\epsilon (a+dX)|&\le \int_{\{x:a+dx\in A\}}|f(a+dx)-h_\epsilon(a+dx)|\mathrm{d}\mu_X(x)
    \\&\leq 
    2\sup_y |f(y)| \mu_X(\{x:a+dx\in A\})
    \\&\leq
    2\sup_y |f(y)| \sum_{k=1}^\infty
    \mu_X(\left\{x:a+dx\in  \{z\in\mathbb{R}^n : ||x_k-z||<\delta_k\}\right\})
    \\&\leq 
    2\sup_y |f(y)| \sum_{k=1}^\infty
    \mu_X\left(
            \{x\in\mathbb{R}^n : ||\frac{x_k-a}{d}-x||<\frac{\delta_k}{d}\}
        \right).
\end{aligned}
\end{equation*}
Equation \eqref{eq:choice_of_delta_n} now gives us that for all $d\geq d_1$
\begin{equation}
    |Ef(a+dX)-Eh_\epsilon (a+dX)|\leq 
    2\sup_y |f(y)|\epsilon.
\end{equation}
From this we get that for all $d\geq d_1$
\begin{equation}
    \begin{aligned}
        \g{X,h_\epsilon}(d)&\leq g_{_{X,f}}(d)+2\sup_y |f(y)|\epsilon
        \\
        \g{X,h_\epsilon}(d)&\geq g_{_{X,f}}(d)-2\sup_y |f(y)|\epsilon
    \end{aligned}
\end{equation}
which implies
\begin{equation}
    \begin{aligned}
        \g{X,h_\epsilon}(d_2)-\g{X,h_\epsilon}(d_1)
        &\geq
        \g{X,f}(d_2)-\g{X,f}(d_1)
        -
        4\sup_y |f(y)|\epsilon.
    \end{aligned}
\end{equation}
Thus if we choose 
\begin{equation}
    \epsilon<\frac{\g{X,f}(d_2)-\g{X,f}(d_1)}{4\sup_y |f(y)|}
\end{equation}
then we see that $(X,h_\epsilon)$ is not reasonable. 
\end{proof}

%\subsection{Compactly supported darts}

\section{Selfdecomposable distributions are reasonable}  \label{section:selfdecomposible} 

We begin this section with proving Theorem \ref{theorem:reasonable.dart}.

\begin{proof}[Proof of Theorem \ref{theorem:reasonable.dart}]
Fix $f$ and $s>0$. From $X|dX$ it follows
that $sX|dsX$. Choose a random variable $Z$ so that if $Z$ and $X$ are independent,
then $sX+Z$ and $dsX$ have the same distribution. By Proposition \ref{proposition:basic_g_properties} we have that
\begin{equation}
    g_{_{X,f}}(s)= g_{_{sX,f}}(1) \geq g_{_{sX+Z,f}}(1) =
    g_{_{dsX,f}}(1)=g_{_{X,f}}(ds).
\end{equation}
\end{proof}

\begin{remark}
The point of $X|dX$ is that you can then simulate being at distance $ds$
when you are standing at distance $s$ by randomizing your target. Hence you can do at least
as well at distance $s$ as at distance $ds$ with respect to any payoff function.
\end{remark}

As all selfdecomposable distributions are infinitely divisible (see \cite{sato1999levy}, p. 93), it is natural to ask whether all darts that have infinitely divisible distributions are reasonable. 
However, it is immediate from either Theorem \ref{theorem:cfeasy}
or Theorem \ref{theorem:2pointmasses} that the Poisson distribution (the building block of
almost all infinitely divisible distributions) is not reasonable.

We end this section by listing some examples of selfdecomposable distributions which are not stable.
We won't list the original papers where these were proved but by looking at 
\cite{sato1999levy} and \cite{SH} one obtains almost all of these as well as others.

The list is as follows: all gamma distributions, Laplace distribution, Pareto distribution, Gumbel distribution,
logistic distribution, log-normal distribution, F-distribution, t-distribution,
hyperbolic-sine and hyperbolic-cosine distributions,
the Beta distribution of the second kind (sometimes called the beta prime distribution,
not to be confused with the ordinary Beta distribution), so-called "generalized Gamma convolutions" 
(see \cite{Bond})  and the half Cauchy distribution (which is interesting in light of the known fact that
the half normal distribution is not even infinitely divisible and hence not
selfdecomposable, as well as not being reasonable as we saw in the introduction). 

\section{Reasonableness with respect to $\cos(x)$}  \label{section:cosine}

In this section, we will prove Theorem \ref{theorem:cf} (which is 
a strengthening of Theorem \ref{theorem:cfeasy}), Theorem \ref{theorem:2pointmasses},
Proposition \ref{proposition:cfnotToZero}  and Theorem \ref{theorem:reasPlusNonreasCosine}.

\subsection{The Cosine payoff function}

We now state and prove an extension of Theorem \ref{theorem:cfeasy}. It will be an extension since it will
deal with darts in  $\mathbb{R}^n$ and because we will obtain an explicit formula 
for $g_{_{X,f}}(d)$ which will lead to a necessary and sufficient condition for reasonableness
with respect to our "cosine function".

\begin{theorem}\label{theorem:cf}
Let $X$ be any dart taking values in $\mathbb{R}^n$ with characteristic function $\phi_X$, and 
let $f(x):=\cos \left(\sum_{j=1}^n x_j\right)$. Then for any $d>0$ we have that
\begin{equation}
    Ef(a+dX)
=
|\phi_X(d\vec{1})|\cos\left(\sum_{j=1}^n a_j+\text{Arg}(\phi_X(d\vec{1}))\right),
\label{eq:expectedCosine}
\end{equation}
where $\vec{1}=(1,1,...,1)$. 

In particular this implies that if $f(x)=\cos \left(\sum_{j=1}^n x_j\right)$, 
then $g_{_{X,f}}(d)=\left|\phi_X(d\vec{1})\right|$, 
and hence  $(X,f)$ is reasonable if and only if $\left|\phi_X(d\vec{1})\right|$ is 
decreasing in $d$ on $(0,\infty)$.
\end{theorem}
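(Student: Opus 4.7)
The plan is to reduce the entire statement to a single direct computation of $E\cos\bigl(\sum_j(a_j + dX_j)\bigr)$ by passing to the complex exponential, extract the modulus/argument of the characteristic function, and then optimize over $a$.

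First I would rewrite $\cos(y) = \operatorname{Re}(e^{iy})$ and compute
\begin{equation}
Ef(a+dX) = \operatorname{Re}\Bigl(E\exp\bigl(i\sum_{j=1}^n (a_j + dX_j)\bigr)\Bigr) = \operatorname{Re}\Bigl(e^{i\sum_j a_j}\,E\exp(id\,\vec{1}\cdot X)\Bigr).
\end{equation}
By definition of the characteristic function, the inner expectation is $\phi_X(d\vec{1})$. Writing $\phi_X(d\vec{1}) = |\phi_X(d\vec{1})|\,e^{i\,\text{Arg}(\phi_X(d\vec{1}))}$ (with the convention that when $\phi_X(d\vec{1})=0$ the argument is irrelevant since the whole expression is zero), we get
\begin{equation}
Ef(a+dX) = |\phi_X(d\vec{1})|\,\operatorname{Re}\bigl(e^{i(\sum_j a_j + \text{Arg}(\phi_X(d\vec{1})))}\bigr) = |\phi_X(d\vec{1})|\cos\Bigl(\sum_j a_j + \text{Arg}(\phi_X(d\vec{1}))\Bigr),
\end{equation}
which is exactly \eqref{eq:expectedCosine}.

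Next I would take the supremum over $a \in \mathbb{R}^n$. Since the map $a \mapsto \sum_j a_j$ is surjective onto $\mathbb{R}$, one can choose $a$ so that $\sum_j a_j + \text{Arg}(\phi_X(d\vec{1})) \in 2\pi\mathbb{Z}$, making the cosine equal to $1$. Since $|\phi_X(d\vec{1})| \geq 0$, this is indeed the supremum, so
\begin{equation}
g_{_{X,f}}(d) = |\phi_X(d\vec{1})|.
\end{equation}

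Finally, by definition $(X,f)$ is reasonable precisely when $g_{_{X,f}}(d)$ is decreasing on $(0,\infty)$, which in view of the previous display is exactly the condition that $|\phi_X(d\vec{1})|$ is decreasing in $d$ on $(0,\infty)$. There is no real obstacle here; the only minor care needed is the case $\phi_X(d\vec{1})=0$, which is handled transparently by the convention noted above since both sides of \eqref{eq:expectedCosine} vanish.
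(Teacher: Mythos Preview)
Your proof is correct and follows essentially the same route as the paper: pass to the complex exponential, factor out $e^{i\sum_j a_j}$, identify the remaining expectation as $\phi_X(d\vec{1})$, write it in polar form, and then take the supremum over $a$. Your explicit remarks about the surjectivity of $a\mapsto\sum_j a_j$ and the $\phi_X(d\vec{1})=0$ case are small clarifications the paper leaves implicit, but the argument is otherwise identical.
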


\begin{proof}
We have that 
\begin{equation}
\begin{aligned}
    Ef(a+dX) &= E \cos \left(\sum_{j=1}^n a_j+d\Vec{1}\cdot X \right)
    =
    \text{Re}\left(
                E \text{exp} \left(i\sum_{j=1}^n a_j+id\Vec{1}\cdot X \right)
            \right)
    \\&=
    \text{Re}\left(
                e^{i\sum_{j=1}^n a_j}
                E 
                e^{id\Vec{1}\cdot X}
            \right)
    =
    \text{Re}\left(
                e^{i\sum_{j=1}^n a_j}
                \phi_X(d\vec{1})
            \right)
    \\&=
    \text{Re}\left(
                e^{i\sum_{j=1}^n a_j}
                |\phi_X(d\vec{1})|
            e^{i\text{Arg}(\phi_X(d\vec{1}))}
            \right)
    \\&=
    |\phi_X(d\vec{1})|\cos\left(\sum_{j=1}^n a_j+\text{Arg}(\phi_X(d\vec{1}))\right)
\end{aligned}
\end{equation}
and thus
\begin{equation}
    g_{_{X,f}}(d)=|\phi_X(d\vec{1})|.
\end{equation}
\end{proof}

Besides giving us the  means to investigate reasonableness, Theorem \ref{theorem:cf} also implicitly 
tells us in one dimension which $a$'s 
maximize $E\cos(a+dX)$. Note that the points of maximization can move around in
a  discontinuous way, as the following example demonstrates.

\begin{example}
Let $X$ be a dart taking values in $\mathbb{R}$ such that $P(X=1)=P(X=-1)=1/2$. The characteristic function of $X$ is $\phi_X(t)=\cos(t)$, and by Theorem \ref{theorem:cf} we see that $X$ is not reasonable. Furthermore, by the same theorem, $E(\cos(a+dX))$ is always optimized at $a=-$Arg$(\phi_X(d))+2k\pi$, $k\in\mathbb{Z}$, and so as $d$ changes, the optimal place to aim switches back and forth between $2k\pi$ and $\pi+2k\pi$. 
\end{example}

\begin{remark}
(i) Note that when $d\geq 2$ the set of $a$'s which maximizes \eqref{eq:expectedCosine} is very large. 
For any fixed $a_1,...,a_{n-1},$ there are infinitely many $a_n$ such that $a=(a_1,...,a_n)$ 
maximizes $Ef(a+dX)$.\newline
(ii) Theorem \ref{theorem:cf} implies that if $\phi_X(d\vec{1})=0$, 
then it does not matter where we aim when we are at distance $d$. 
\end{remark}

There are two simple ways of combining random variables, adding independent copies 
or taking convex combinations. Using Theorem \ref{theorem:cf} and 
elementary properties of characteristic functions (including the fact
that the characteristic function of a symmetric random vector is real-valued) one
easily obtains the following two corollaries.

\begin{corollary}\label{corollary:independentDartsCosine}
  Let $X$ and $Y$ be two independent darts taking values in $\mathbb{R}^n$ and let 
$f(x):=\cos \left(\sum_{j=1}^n x_j\right)$.
  If $(X,f)$ and $(Y,f)$ are both reasonable, then $(X+Y,f)$ is also reasonable.
\end{corollary}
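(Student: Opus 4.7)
The plan is to reduce the problem entirely to the characterization in Theorem \ref{theorem:cf} and then exploit multiplicativity of the characteristic function under independent sums. By Theorem \ref{theorem:cf}, the hypothesis that $(X,f)$ and $(Y,f)$ are reasonable is equivalent to saying that both $d\mapsto |\phi_X(d\vec{1})|$ and $d\mapsto |\phi_Y(d\vec{1})|$ are decreasing on $(0,\infty)$, and the conclusion to be established is that $d\mapsto |\phi_{X+Y}(d\vec{1})|$ is decreasing on $(0,\infty)$.

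First I would invoke independence of $X$ and $Y$ to write $\phi_{X+Y}(t) = \phi_X(t)\,\phi_Y(t)$ for every $t\in\mathbb{R}^n$, and hence
\begin{equation}
|\phi_{X+Y}(d\vec{1})| \;=\; |\phi_X(d\vec{1})|\cdot |\phi_Y(d\vec{1})|.
\end{equation}
Next I would point out the elementary fact that on $(0,\infty)$ a pointwise product of two nonnegative decreasing functions is again decreasing: if $0<d_1<d_2$ and $u_i,v_i\ge 0$ with $u_1\ge u_2$ and $v_1\ge v_2$, then $u_1v_1\ge u_2v_2$. Applied to $u_i=|\phi_X(d_i\vec{1})|$ and $v_i=|\phi_Y(d_i\vec{1})|$, which are nonnegative by definition of the modulus and decreasing in $d_i$ by hypothesis, this shows that $|\phi_{X+Y}(d\vec{1})|$ is decreasing. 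Applying Theorem \ref{theorem:cf} in the converse direction then yields that $(X+Y,f)$ is reasonable.

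There is no real obstacle; everything is essentially a one-line consequence of Theorem \ref{theorem:cf} combined with the multiplicativity of characteristic functions under independent sums. The only mild point worth stating explicitly is the nonnegativity of $|\phi_X(\cdot)|$ and $|\phi_Y(\cdot)|$, which is what makes the product of two decreasing functions decreasing (without nonnegativity this would fail in general). No additional smoothness, regularity, or symmetry hypotheses on $X$ or $Y$ are needed.
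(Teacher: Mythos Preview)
Your proposal is correct and is exactly the approach the paper indicates: it states that the corollary follows from Theorem~\ref{theorem:cf} together with elementary properties of characteristic functions, namely the multiplicativity $\phi_{X+Y}=\phi_X\phi_Y$ under independence, so that $|\phi_{X+Y}(d\vec{1})|=|\phi_X(d\vec{1})|\cdot|\phi_Y(d\vec{1})|$ is a product of nonnegative decreasing functions.
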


\begin{corollary}
\label{cor:symmetricConvex}
Let $X$ and $Y$ be two independent darts taking values in $\mathbb{R}^n$ 
which are symmetric about the origin and let 
$f(x):=\cos \left(\sum_{j=1}^n x_j\right)$.
If $(X,f)$ and $(Y,f)$ are both reasonable, then $(Z,f)$ is also reasonable,
where $Z$ is any convex combination of $X$ and $Y$.
\end{corollary}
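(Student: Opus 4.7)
The plan is to reduce the statement to a simple monotonicity fact about convex combinations of nonnegative decreasing functions, using Theorem \ref{theorem:cf} on both ends.

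First I would invoke Theorem \ref{theorem:cf}: reasonableness of $(X,f)$ and $(Y,f)$ is equivalent to $|\phi_X(d\vec{1})|$ and $|\phi_Y(d\vec{1})|$ being decreasing in $d$ on $[0,\infty)$. Since $X$ and $Y$ are symmetric about the origin, $\phi_X$ and $\phi_Y$ are real-valued. Writing the convex combination as $Z$ with law $p\mu_X+(1-p)\mu_Y$, one immediately obtains
\begin{equation}
\phi_Z(t)=p\phi_X(t)+(1-p)\phi_Y(t),
\end{equation}
which is also real-valued, so it suffices to show that $\phi_Z(d\vec{1})$ is nonnegative and decreasing, since then $|\phi_Z(d\vec{1})|=\phi_Z(d\vec{1})$ and Theorem \ref{theorem:cf} applied in the other direction finishes the job.

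The key step is the following observation: if $\psi:[0,\infty)\to\mathbb{R}$ is continuous with $\psi(0)=1$ and $|\psi|$ is decreasing, then $\psi\geq 0$ everywhere and $\psi$ itself is decreasing. Indeed, if $\psi(d_0)<0$ for some $d_0$, then by the intermediate value theorem there exists $d_1\in(0,d_0)$ with $\psi(d_1)=0$, giving $|\psi(d_1)|=0<|\psi(d_0)|$, contradicting monotonicity of $|\psi|$. Thus $\psi\geq 0$ and $|\psi|=\psi$ is decreasing. Applying this to $\psi(d):=\phi_X(d\vec{1})$ and to $\phi_Y(d\vec{1})$ (both continuous real-valued with value $1$ at $0$) gives that each of these is nonnegative and decreasing on $[0,\infty)$.

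Once this is in hand, the conclusion is routine: the map $d\mapsto \phi_Z(d\vec{1})=p\phi_X(d\vec{1})+(1-p)\phi_Y(d\vec{1})$ is a nonnegative linear combination of two nonnegative decreasing functions, hence nonnegative and decreasing. Therefore $|\phi_Z(d\vec{1})|$ is decreasing on $[0,\infty)$, and Theorem \ref{theorem:cf} yields that $(Z,f)$ is reasonable. The only nontrivial point is the sign-forcing argument in the middle paragraph; everything else is bookkeeping with characteristic functions.
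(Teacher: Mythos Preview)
Your proof is correct and follows essentially the same approach as the paper, which simply notes that the corollary follows from Theorem~\ref{theorem:cf} together with the fact that characteristic functions of symmetric random vectors are real-valued. Your intermediate-value-theorem argument making explicit why $\phi_X(d\vec{1})\ge 0$ is exactly the ``elementary property'' the paper leaves to the reader.
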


\begin{remark}
A special case would be if $X$ is a point mass of weight 1 at 0 and $Y$ is a 
standard normal distribution. Then any convex combination of them would be
reasonable with respect to $\cos(x)$. However, one can check (by computing the
characteristic function) that if we modify $Y$ by adding a constant but leave $X$ as is,
then any nontrivial convex combination will not be reasonable with respect to $\cos(x)$. 
\end{remark}

\subsection{Two point masses and almost periodicity}

We now move on to the proof of Theorem \ref{theorem:2pointmasses} and 
Proposition \ref{proposition:cfnotToZero}.

\begin{proof}[Proof of Theorem \ref{theorem:2pointmasses}]
We first prove the first statement concerning random variables.

We begin by giving the idea of the proof. The characteristic function $\phi_d(t)$ for
the (normalized) discrete component of the distribution is what is called almost periodic. This means it
is periodic up to a small error. Since there is at least two point masses, 
$|\phi_d(t)|$ becomes less than 1. Therefore, by almost periodicity,
$|\phi_d(t)|$ is not monotone. We now need to make sure, in view of Theorem \ref{theorem:cfeasy},
that the contribution of the continuous part 
of the characteristic function, $|\phi_c(t)|$, does not destroy this nonmonotonicity.
However, it is known that $|\phi_c(t)|$ goes to 0 in a Cesaro sense. 
Finally, the almost periodicity gives us that the nonmonotonicity of $|\phi_d(t)|$
occurs on a periodic basis and hence can be shown to occur when $|\phi_c(t)|$ is small, 
thereby not destroying the nonmonotonicity.

We now begin the proof.
Partition the distribution of $X$, $\mu_X$, into its atomic and continuous pieces
$$
\mu_X=p\mu_d+(1-p) \mu_c
$$
where $\mu_d$ and  $\mu_c$ are then probability measures. We then have
$$
\phi_X(t)= p \phi_d(t)+(1-p) \phi_c(t)
$$
where $\phi_d$ and  $\phi_c$ are the characteristic functions corresponding to  $\mu_d$ and  
$\mu_c$.  

First, we know from \cite{chung2001course} (Theorem 6.2.5, p. 164)  that 
\begin{equation}
\label{eq:chung}
 \lim_{T\to\infty} \frac{1}{T}\int_{0}^{T}|\phi_c(t)|^2\mathrm{d}t = 0.
\end{equation}
Since $\phi_c(t)$ is also uniformly continuous, it follows that for every $\eps>0$, $L$ and $M$,
there exists $x=x(\eps,L,M)\ge M$ so that 
$$
|\phi_c(t)|\le \eps \mbox{ on } [x,x+L].
$$

Next, since there are at least two point masses,  $\mu_d$ is nondegenerate and hence
 $|\phi_d(t)|$ is not constant. In particular, there exists $t_0>0$ and $\eps_0>0$ so that
 $$
|\phi_d(t_0)|< 1-\eps_0.
$$
Next it is known (see \cite{BS}, p.43-44) that $\phi_d(t)$ is an almost periodic function.
This implies (see \cite{BS}, p.43-44) that there exists $L$ so that every interval of length 
$L$ in $\mathbb{R}$ contains a $\tau$ so that
\begin{equation}
\label{eq:Linfinity}
\|\phi_d(t+\tau)-\phi_d(t)\|_\infty <  \frac{\eps_0}{10}.
\end{equation}

With this $L$, by the above, choose $x=x(\frac{p\eps_0}{10},L,t_0)\ge t_0$ so that 
\begin{equation}
\label{eq:cont.good}
|\phi_c(t)|\le \frac{p\eps_ 0}{10}\mbox{ on } [x,x+L].
\end{equation}
Choosing now $\tau\ge 0$ as above in the interval $[x-t_0,x-t_0+L]$,
we have by \eqref{eq:Linfinity} that 
$$
|\phi_d(t_0+\tau)| <  1- \frac{3\eps_0}{4}. 
$$
This implies, using \eqref{eq:cont.good}, that 
$$
|\phi_X(t_0+\tau)| <  p(1- \frac{3\eps_0}{4}) +\frac{(1-p)p\eps_ 0}{10}\le 
p(1- \frac{\eps_0}{2}).
$$

Next, choose $y=y(\frac{p\eps_0}{10},L,t_0+\tau)\ge t_0+\tau$ so that 
\begin{equation}
\label{eq:cont.goodagain}
|\phi_c(t)|\le \frac{p\eps_ 0}{10}\mbox{ on } [y,y+L].
\end{equation}
Choosing now $\tau'$ as above in the interval $[y,y+L]$,
we have by \eqref{eq:Linfinity} that 
$$
|\phi_d(\tau')| >  1- \frac{\eps_0}{10}. 
$$
This implies, using \eqref{eq:cont.goodagain} that 
$$
|\phi_X(\tau')| >  p(1- \frac{\eps_0}{10}) -\frac{(1-p)p\eps_ 0}{10}\ge 
p(1- \frac{\eps_0}{5}).
$$

We therefore have that $\tau'\ge t_0+\tau>0$ but
$$
|\phi_X(\tau')| \ge p(1- \frac{\eps_0}{5}) > p(1- \frac{\eps_0}{2}) \ge |\phi_X(t_0+\tau)| .
$$
This implies that $|\phi_X(t)|$ is not decreasing in $t$ and hence by 
Theorem \ref{theorem:cf}, $(X,\cos(x))$ is not reasonable.

The random vector case easily 
follows from the 1-dimensional case just proved, the fact that if $X$ has at least two
point masses, then at least one of the marginals has two point masses and
Proposition \ref{proposition:projections}.
\end{proof}

\begin{proof}[Proof of Proposition \ref{proposition:cfnotToZero}]
Assume that $X$ is reasonable with respect to $\cos(x)$. Then, by Theorem \ref{theorem:cfeasy},
$|\phi_X(t)|$ must be decreasing in $t$ on $[0,\infty)$. 
However, by \cite{chung2001course} (Theorem 6.2.5, p. 164) we have, since the distribution
is continuous, that 
\begin{equation}
 \lim_{T\to\infty} \frac{1}{T}\int_{0}^{T}|\phi_X(t)|^2\mathrm{d}t =0.
 \end{equation}
Together these imply that $\phi_X(t)$ goes to zero as $t\to\infty$,
which gives a contradiction. Thus $X$ is not reasonable with respect 
to $\cos(x)$. 
\end{proof}

\subsection{An example with a phase transition}

We now move on to the proof of Theorem \ref{theorem:reasPlusNonreasCosine}.

\begin{proof}[Proof of Theorem \ref{theorem:reasPlusNonreasCosine}]
By Theorem \ref{theorem:cf} $X$ is reasonable with respect to $\cos(x)$ if and only if $|\phi_X(d)|$ is decreasing in $d$, $d>0$. Due to independence, the characteristic function of $X$ is
\begin{equation}
    \phi_X(d)=\phi_{X_1}(d)\phi_{X_2}(d)
    =
    (1-p+p e^{id})\text{exp}(-\sigma^2d^2/2).
\end{equation}
The absolute value of this is decreasing if and only if
\begin{equation}
    |\phi_X(d)|^2
    =
    \left(p^2+(1-p)^2+2(1-p)p\cos(d)\right)\text{exp}(-\sigma^2d^2)
\end{equation}
is decreasing. This in turn is decreasing if and only if its derivative with respect to $d$ is nonpositive 
on $[0,\infty)$. We have that 
\begin{equation}
    \frac{\text{d}}{\text{d}d}\left(|\phi_X(d)|^2\right)
    =
    -2
    \text{exp}(-\sigma^2d^2)\Big[
    \sigma^2d\Big(p^2+(1-p)^2+2(1-p)p\cos(d)\Big)
    +
    (1-p)p\sin(d)
    \Big]
\end{equation}
and thus $X$ is reasonable with respect to $\cos(x)$ if and only if
\begin{equation}
    \sigma^2d\Big(p^2+(1-p)^2+2(1-p)p\cos(d)\Big)+(1-p)p\sin(d)\geq 0,\ \forall d\geq 0.
\end{equation}
Note that if $p=1/2$, then it is easy to see that $|\phi_X(\pi+2m\pi)|=0$ for all $m\in\mathbb{N}$, but the characteristic function is still not identically zero, and is thus not decreasing. Now assume that $p\neq 1/2$. We have that 
\begin{equation}
    p^2+(1-p)^2+2(1-p)p\cos(d)
    =
    |1-p+pe^{id}|^2
    \geq 
    |1-2p|^2>0,\ \forall d.
\end{equation}
Thus 
\begin{equation}
    \sigma^2d\Big(p^2+(1-p)^2+2(1-p)p\cos(d)\Big)+(1-p)p\sin(d)
    \geq 
    \sigma^2d|1-2p|^2
    +
    (1-p)p\sin(d).
\end{equation}
As $\sin (d)\geq 0$ for $d\in [0,\pi]$, it is easy to see that if $\sigma^2\pi|1-2p|^2\geq(1-p)p$ then 
\begin{equation}
    \sigma^2d\Big(p^2+(1-p)^2+2(1-p)p\cos(d)\Big)+(1-p)p\sin(d)\geq 0,\ \forall d\geq 0.
\end{equation}
Thus $p\neq 1/2$ and $\sigma^2\geq (1-p)p/(\pi|1-2p|^2)$ is a sufficient condition for $X$ to be reasonable with respect to $\cos(x)$.

Since, for all $p\in (0,1)$,  the Fourier transform of Bern($p$) has a zero in the complex plane,
the final claim follows from Theorem  \ref{theorem:entire}.
\end{proof}

\section{Compactly supported darts}  \label{section:compactsupport} 
In this section, we prove Theorems \ref{theorem:compactDart} and \ref{theorem:entire}.

\begin{proof}[Proof of Theorem \ref{theorem:compactDart}]
In view of Proposition \ref{proposition:compactPayoff}, it is sufficient to find a bounded continuous payoff 
function with respect to which $X$ is not reasonable. Next, by Proposition
\ref{proposition:projections}, it then suffices to show that this holds in one dimension.

By \cite{lukacs1970characteristic} (see Theorem 7.2.3, p. 202) we have that the characteristic function of $X$, $\phi_X$, is an entire function with infinitely many zeros, none of
which of course lie on the imaginary axis. 
Let $z_0$ be any zero of $\phi_X$. As the characteristic function is entire, all of its zeros are isolated, and thus there exists a $d_0>1$ such that $\phi_X(d_0z_0)\neq 0$.

Now let $c,\omega \in \mathbb{R}$ be defined so that 
\begin{equation}
    z_0=\omega-ic, \,\, \omega\neq 0
\end{equation}
and define the function $f:\mathbb{R}\to \mathbb{R}$ by 
\begin{equation}
   f(x):=e^{cx}\cos(\omega x)=\text{Re}\left(e^{(c+i \omega)x}\right)=\text{Re}\left(e^{iz_0x}\right). 
\end{equation}
Note that $f$ is not bounded. For any $d>0$ and $a\in\mathbb{R}$ we have that
\begin{equation}
    Ef(a+dX)=e^{ca}\text{Re}\left(e^{i\omega a}Ee^{iz_0dX}\right)=e^{ca}\text{Re}\left(e^{i\omega a}\phi_X(dz_0)\right)
    \label{eq:unboundedExpectation}
\end{equation}
and so by taking $d=1$ we get
\begin{equation}
    Ef(a+X)=0,\ \forall a.
\end{equation}
Furthermore, for $a_0:=-\text{Arg}\left(\phi_X(d_0z_0)\right)/\omega$ this gives us
\begin{equation}
    Ef\left(a_0+d_0X\right)
    =
    \text{exp}\left(-\frac{c\text{Arg}\left(\phi_X(d_0z_0)\right)}{\omega}\right)
    |\phi_X(d_0z_0)|>0
\end{equation}
Since $d_0 >1$, this gives us the type
of nonreasonable behavior we are after. We now however
have to modify $f$ so that it is bounded while maintaining this behavior.

As $X$ is bounded, there is a $B>0$ such that $P(d_0|X|\leq B)=1$. Now let us define the payoff function $h$ by $h(x):=f(x)$ for $|x|\leq |a_0|+B$, $h(x):=-\sup_{|y|\leq |a_0|+10B}(|f(y)|)$ for $|a_0|+2B\leq |x|$, and for $|a_0|+B\leq |x|\leq |a_0|+2B$ it is defined as
\begin{equation}
    h(x):=
     \frac{|x|-(|a_0|+B)}{B}\left(-f(x)-\sup_{|y|\leq |a_0|+10B}(|f(y)|)\right)+f(x).
\end{equation}
Note that $h$ is continuous and bounded and that 
\begin{equation}
\begin{aligned}
    h(x)&\leq f(x),\ |x|\leq |a_0|+10B\\
    h(x)&\leq 0,\ |a_0|+2B\leq |x|.
\end{aligned}
\end{equation}
To see the first of these inequalities, note that for $|a_0|+B\leq |x|\leq |a_0|+2B$, $h(x)$ is equal to $f(x)$ plus a nonpositive term. 

With this definition we have that  
\begin{equation}
    Eh(a_0+d_0X)=Ef(a_0+d_0X)>0
\end{equation}
and 
\begin{equation}
\begin{aligned}
    Eh(a+X)&\leq Ef(a+X)=0,\ 0\leq |a| \leq |a_0|+9B\\
    Eh(a+X)&\leq 0,\ |a_0|+9B\leq |a|.
\end{aligned}
\end{equation}
Thus $\g{X,h}(1)\leq 0$ but $\g{X,h}(d_0)> 0$ implying 
that $(X,h)$ is not reasonable.  
%Another application of  Proposition \ref{proposition:compactPayoff} tells us that we can take $h$ to be nonnegative and 
%have compact support.
\end{proof}

\begin{remark}
1.  Note that in \eqref{eq:unboundedExpectation}, if $c$ and $\phi_X(dz_0)$ are nonzero, then one can make $Ef(a+dX)$ arbitrarily large by choosing $a$ appropriately. Thus if we allowed for unbounded payoff functions, we would have that $\g{X,f}(d)\in\{0,\infty\}$ for all $d>0$. \\
2. The second half of the proof applies more generally and shows that if one has a compact dart
and a continuous function which is "nonreasonable", then $f$ can be modfied to be bounded and continuous.
\end{remark}

\begin{proof}[Proof of Theorem \ref{theorem:entire}]
The same proof works for both statements.
By the proof of  Theorem \ref{theorem:compactDart}, we have that if 
$z_0=\omega-ic, \,\, \omega\neq 0$ is a zero of the Fourier transform within its strip of analyticity
and 
\begin{equation}
   f(x):=e^{cx}\cos(\omega x)=\text{Re}\left(e^{(c+i \omega)x}\right)=\text{Re}\left(e^{iz_0x}\right),
\end{equation}
then 
\begin{equation}
\sup_{a\in\mathbb{R}} Ef(a+X) =0
\end{equation}
and for $d$ slightly larger than 1,
\begin{equation}
\sup_{a\in\mathbb{R}} Ef(a+dX) >0.
\end{equation}
We can now apply Proposition \ref{proposition:bded.from.above} to conclude that $X$ is not
reasonable against some continuous payoff function.
\end{proof}

\section{Having a point mass implies not reasonable}  \label{section:pointmass} 

In this section, we prove Theorem \ref{theorem:aNonTrivialPointmass}.

\begin{proof}[Proof of Theorem \ref{theorem:aNonTrivialPointmass}]
In view of Proposition \ref{proposition:compactPayoff}, it is sufficient to find a bounded continuous payoff 
function with respect to which $X$ is not reasonable.

We assume without loss of generality that $P(X=0)>0$. 
Since $X=(X_1,...,X_n)$ is nondegenerate, at least one of 
$X_1,...,X_n$ must be nondegenerate which we assume to be $X_1$.
Let $h$ be defined by $h(x)=x_1$. If we find a payoff function $k$ for $X_1$
so that $(X_1,k)$ is not reasonable, then by Proposition \ref{proposition:projections}, 
we will have that $(X, k \circ h)$ is not reasonable.  Also, $k$  being bounded and continuous 
implies that $k \circ h$ is also. Hence it suffices to consider the 1-dimensional case. 

If $X_1$ has two or more point masses, then Theorem \ref{theorem:2pointmasses}
yields our function $k$. Otherwise, we may assume that $X_1$ has only 0 as a point mass.
In view of Proposition  \ref{proposition:compactPayoff}, we need only find a continuous 
bounded payoff function with respect to which $X_1$ is not reasonable.

In this case, for all $\delta\in (0,1/2)$ we define
\begin{equation}
    k_\delta(x)=
    \begin{cases}
    1-\frac{|x|}{\delta},\ |x|\leq \delta\\
    0,\ \delta<|x|<1-\delta\\
    \frac{P(X_1=0)}{2}\cdot (\frac{|x|}{\delta}+1-\frac{1}{\delta}), 1-\delta\leq |x|\leq 1\\ 
     \frac{P(X_1=0)}{2},\ |x|\geq 1.
    \end{cases}
\end{equation}
Note that $k_\delta$ is continuous and bounded by 1 in absolute value for all $\delta$.
It is easy to show that for any fixed $\delta$
\begin{equation}
    \lim_{d\to\infty} \g{X,k_\delta}(d)\geq P(X_1=0)+ \frac{P(X_1=0)}{2},
\end{equation}
and so we only need to show that there exist $d>0$ and $\delta$
such that $\g{X_1,k_\delta}(d)<P(X_1=0)+ \frac{P(X_1=0)}{2}$. We will do this by showing that we can find  $d>0$ and $\delta$ 
so that $\g{X_1,k_\delta}(d)$ is arbitrarily close to $P(X_1=0)$. 

For any $\epsilon>0$ we can choose a $d_0>0$ such that 
\begin{equation}
    P\left(|d_0X|>\frac{1}{2}\right)<\epsilon.
    \end{equation}
    
Next, using Lemma \ref{lemma:un.cont}, it can easily be shown that 
\begin{equation}
\label{eq:deltaLim}
       \lim_{\delta\to 0} \left(\g{X_1,k_\delta}(d_0)\right) \leq   P(X_1=0)    +   \frac{P(X_1=0)}{2} \epsilon.
       \end{equation}
Thus there exist $d>0,\delta\in (0,1/2)$ 
such that 
$$
\g{X_1,k_\delta}(d)<P(X_1=0)+ \frac{P(X_1=0)}{2}
$$
as desired.
\end{proof}

\medskip
The following example demonstrates other interesting things which can occur with a single point mass. Namely, there
is a pair $(X,f)$ with $f$ bounded such that $\g{X,f}(d)$ is strictly increasing in $d$.
Let $X$ have law
\begin{equation*}
    \mu_X=\frac{\delta_0}{2}+\frac{\mu_Z}{2}
\end{equation*}
where  $Z$ is $N(0,1)$.
Let $f$ be a payoff function on $\mathbb{R}$ defined by
\begin{equation*}
    f(x)
    =
    \begin{cases}
    1,\ x=0\\
    0,\ 0<|x|<1\\
    \frac{1}{2}, |x|>1.
    \end{cases}
\end{equation*}
It is elementary and left to the reader to check that 
\begin{equation}
    \g{X,f}(d)   =    \frac{1}{2}+\frac{1}{4} P(|Z|>\frac{1}{d}).
\end{equation}
Note that it is impossible to construct an example of a strictly increasing $\g{X,f}(d)$ where $f$ is bounded and continuous, as it can be shown that for any dart $X$ and any bounded continuous payoff function $f$

\begin{equation*}
    \lim_{d\to 0}\g{X,f}(d) = \sup_x f(x)= \sup_{d>0} \g{X,f}(d) .
\end{equation*}

\section{Singular measures and reasonableness} \label{section:sing} 
In this section, we prove Theorem \ref{theorem:singular}.
We mention that the proof is similar to the proof of Theorem \ref{theorem:aNonTrivialPointmass}.

\begin{proof}[Proof of Theorem \ref{theorem:singular}]
In view of Proposition \ref{proposition:compactPayoff}, it is sufficient to find a bounded
payoff function with respect to which $X$ is not reasonable.

Write the distribution of $X$ as $p\mu_s+(1-p)\mu_{ac}$ where $p >0$, $\mu_s$ is a singular probability
measure, $\mu_{ac}$ is an absolutely continuous probability measure 
and $N$ is a Lebesgue null set on which  $\mu_s$ is concentrated.
Without loss of generality $N\subseteq [-1,1]$. We can assume that $p<1$ and 
$\mu_{ac}$ is not compactly supported since otherwise the result would follow from 
Theorem \ref{theorem:compactDart}.

We consider the payoff function $f$ which is $2/p$ on $N$, 1 on
$[-2,2]^c$ and  0 otherwise.

From distance 1, we can get payoff $2 + \mathbb{P}(|X|>2) $ by aiming at the origin.
Assume now that we are at  distance $t<1$.  If we aim at some point in $ [-1-t, 1+t]^c$, then we cannot
hit the set $N$ noting that any translate and scaling of $\mu_{ac}$ gives probability 0 to $N$.
Therefore, in this case, our expected payoff would be at most 1.
On the other hand, if we aim at some point in $[-1-t, 1+t]$, our payoff would 
be at most $2 + \mathbb{P}(|X|> \frac{1}{t}-1)$. For $t$ sufficiently small, $\frac{1}{t}-1>2$ and
$X$ will have some mass between distance 2 and distance $\frac{1}{t}-1$. For such $t$ we will score worse at distance $t$ than at the larger distance 1, and therefore $(X,f)$ is not reasonable.
\end{proof}

\section{Closure properties}  \label{section:closure} 

In this section, we prove Theorems \ref{theorem:closure}
and \ref{theorem:convergenceInDistribution}.

\subsection{Independent sum of reasonable darts}

\begin{proof} [Proof of Theorem \ref{theorem:closure}]
The theorem in fact is true for any class $\calF$ satisfying the following property.
\begin{equation}
    \left\{h:\mathbb{R}^n\to \mathbb{R}\ |\ h(x):=Ef(x+\sum_{i=1}^m d_iX_i),\ d_i\geq 0 \ \forall i,\ \ f\in \calF{}, \right\}\subseteq \calF{}
    \label{eq:stayInFunctionSet},
\end{equation}
for any independent darts $X_1,....,X_m$. One easily checks that the classes of payoff functions
listed in the statement of the theorem all satisfy this property. We now prove the theorem for
any class $\calF$ satisfying this property.

It is easily seen by induction that it suffices to prove the $m=2$ case. Let $X,Y\in \mathcal{X} _\calF$.
Fix an $f\in \calF{}$, and choose $d_1 , d_2 ,D_1,D_2\geq 0$ such that $d_1\leq D_1, d_2\leq D_2$ and define the function
\begin{equation}
    h(x)=Ef(x+d_1X).
\end{equation}
Since $h\in \calF{}$ by assumption, we have that $Y$ is reasonable with respect to $h$, and thus
\begin{equation}
    \sup_a Eh(a+d_2Y)\geq \sup_a Eh(a+D_2Y)
\end{equation}
and from this we get, using the independence of $X$ and $Y$, that
\begin{equation}
    \begin{aligned}
        \sup_a Ef(a+d_1X+d_2Y)
        &=
        \sup_a \int Ef(a+d_1X+d_2y)\mathrm{d}\mu_Y(y)
        \\&=
        \sup_a \int h(a+d_2y)\mathrm{d}\mu_Y(y)
        = 
        \sup_a Eh(a+d_2Y)
        \\&\geq 
        \sup_a Eh(a+D_2Y)
        =
        \sup_a \int h(a+D_2y)\mathrm{d}\mu_Y(y)
        \\&=
        \sup_a \int Ef(a+d_1X+D_2y)\mathrm{d}\mu_Y(y)
        \\&=
        \sup_a Ef(a+d_1X+D_2Y).
    \end{aligned}
\end{equation}
And by using the same argument again with $X$ and $Y$ reversed, it follows that 
\begin{equation}
    \sup_a Ef(a+d_1X+d_2Y)\geq \sup_a Ef(a+D_1X+D_2Y)
\end{equation}
as desired. This clearly implies that $X+Y\in \mathcal{X} _\calF$.
\end{proof}

\subsection{Convergence in distribution}

\begin{proof} [Proof of Theorem \ref{theorem:convergenceInDistribution}] 
(i) Let $f\in C_0(\mathbb{R}^n)$ and fix $d>0$. We will begin by showing that $\liminf_{j\to\infty} \g{X_j,f}(d) \geq \g{X,f}(d)$ which holds even if $f$ is only bounded. To see this, one
fixes $a\in \mathbb{R}^n$, notes that 
\begin{equation}
    \liminf_{j\to\infty} \g{X_j,f}(d) \geq
\liminf_{j\to\infty}  Ef(a +d X_j)= Ef(a +d X)
\end{equation}
and then takes a supremum over $a$. 

The other direction requires a little more work and uses the fact that $f\in C_0(\mathbb{R}^n)$.
One first  notes (by aiming near infinity) that for any dart $Y$,  $\g{Y,f}(d) \geq 0$.
One also easily notes that if  $\g{Y,f}(d) > 0$, then the supremum in the definition of
$\g{Y,f}(d) $ is obtained. 

Now if $\g{X_j,f}(d)$ does not go to $\g{X,f}(d)$, then there exists $\epsilon_0>0$ and some subsequence $\{j_k\}$
%$\g{X_{j_k},f}(d)$ 
such that 
\begin{equation}
\lim_{k\to\infty}   \g{X_{j_k},f}(d)=\g{X,f}(d)+\epsilon_0.
    \label{eq:subsequence}
\end{equation}

By the above statements,  the supremum in the definition of  $\g{X_{j_k},f}(d)$ 
is obtained for large $k$; let $a_{j_k}$ be some such point.
Using tightness of $\{X_n\}_{n=1}^\infty$, one has that
$$
\lim_{a\to\infty} Ef(a+dX_j)=0
$$
uniformly in $j$ from which it easily follows that $\{a_{j_k}\}$ is contained in a bounded set.
One can therefore extract a further subsequence which converges to some $a_\infty$.
We may assume to simplify the notation that the whole sequence converges.

We now have that $a_{j_k}+X_{j_k}$ converges to $a_\infty+X$  in distribution and hence
$$
\lim_{k\to\infty} \g{X_{j_{k}},f}(d)= 
\lim_{k\to\infty} Ef(a_{j_{k}}+dX_{j_{k_t}})=
Ef(a_\infty+dX)\le \g{X,f}(d)
$$
giving a contradiction.

\medskip
(ii) Let $X_k$ be uniform distribution on $\{0,1/k,2/k,\ldots,1\}$ and $X$ be uniform distribution on
$[0,1]$. Let $f$ be the following bounded continuous function.
It will be zero on $\left(\bigcup_{m=0}^\infty [2m-\frac{1}{10}, 2m+1+\frac{1}{10}]\right)^c$
and on $[2m-\frac{1}{10}, 2m+1+\frac{1}{10}]$ it will be any bounded continuous function
satisfying (1) it takes values in $[0,1]$, (2) it takes the value 1 at each of the points
$\{2m,2m+1/m,2m+ 2/m,\ldots,2m+1\}$, (3) it takes the value 0 at the two endpoints of the interval
and (4) the set of points in the interval where $f$ is zero, has Lebesgue measure at least $.9$.
We will then have that for every $k$, $\g{X_k,f}(1)=1$ (by aiming at $2k$) while it is clear that $\g{X,f}(1)\le 1/2$.

\medskip
(iii) is very easy and left to the reader.

\medskip
(iv)  By (i), it follows that $X$ is reasonable with respect to $C_0(\mathbb{R}^n)$.
It follows now from Proposition \ref{proposition:compactPayoff} that
$X$ is reasonable with respect to $C_b(\mathbb{R}^n)$ as desired.
\end{proof}

\section{Reasonable payoff functions}  \label{section:functions} 

In this section, we prove Proposition \ref{proposition:reasonable.function}

\begin{proof} [Proof of Proposition \ref{proposition:reasonable.function}]
Fix a dart $X$ taking values in $\mathbb{R}^n$. For any $0<d_1<d_2$ we have by the 
weak unimodality of $f$ that
\begin{equation}
    \begin{aligned}
        g_{_{X,f}}(d_1)&=\sup_a Ef(a+d_1X)
        =
        \sup_a Ef\left(\frac{d_1a}{d_2}+d_1X\right)
        \\&=
        \sup_a Ef\left(\frac{d_1}{d_2}(a+d_2X)\right)
        \geq 
        \sup_a Ef(a+d_2X)
        =
        g_{_{X,f}}(d_2).
    \end{aligned}
\end{equation}
\end{proof}

\begin{remark}
If $f:\mathbb{R}^n\to \mathbb{R}$ has the property that there exists $y$ in $\mathbb{R}^n$ 
such that $f(rx+y)$ is decreasing in $r$, where $r$ in $[0,\infty)$, for all $x$ in 
$\mathbb{R}^n$, then $f$ is of the same type as a weakly unimodal function, and is thus reasonable.
\end{remark}

Note also that if $f(x)=\text{arctan}(||x||)$, then it is easy to check that for any dart $X$, 
$g_{_{X,f}}(d)=\sup_x f(x)$ for all $d$ implying that $(X,f)$ is reasonable. The example can be extended by noting 
that any payoff function $f$ with the property that there exist arbitrarily large balls in $\mathbb{R}^n$ 
where $f(x)$ is arbitrarily close to $\sup_{x\in\mathbb{R}^n}f(x)$ would also be reasonable for any dart
$X$ and that $g_{_{X,f}}(d)=\sup_x f(x)$ for all $d$.

%\end{comment}

\section{Questions}\label{section:open}

\begin{question}
Is there a dart which is reasonable w.r.t.\ all bounded payoff functions, but not all payoff functions?
\end{question}

\begin{question}
Is there a dart which is reasonable w.r.t.\ all (bounded) continuous payoff functions, but not all 
(bounded) payoff functions?
\end{question}

\begin{question}
If $(X,f)$ and $(Y,f)$ are reasonable, does it follow that
$(X+Y,f)$ is reasonable (where of course $X$ and $Y$ are independent)?
\end{question}

\begin{question}
Are there reasonable payoff functions which are not of the same type as 
a weakly unimodal function nor have the behavior exemplified by
$\arctan(x)$?
\end{question}

\begin{question}
Is  the second statement in Theorem \ref{theorem:convergenceInDistribution}(i)  true if the payoff function is only assumed to be continuous and
bounded?
\end{question}

\begin{question}
\label{question:notSelfdecomposable}
Is there an example of a reasonable dart which is not selfdecomposable?
\end{question}

\begin{question}
\label{question:notUnimodal}
Is there an example of a  reasonable one-dimensional dart which is not unimodal?
\end{question}

\begin{question}
\label{question:notInfDiv}
Is there an example of a reasonable dart which is not infinitely divisible?
\end{question}

\begin{question}
\label{question:notAbsCont}
Is there an example of a reasonable nondegenerate one-dimensional dart which is not absolutely continuous?
\end{question}

\noindent
Note that if the answer to Question \ref{question:notUnimodal},  \ref{question:notInfDiv}, or 
\ref{question:notAbsCont} is yes, then this would imply that the answer to 
Question \ref{question:notSelfdecomposable} is also yes.

\section*{Acknowledgements}

We thank Jean Bertoin, Lennart Bondesson, Peter Hallum,  Russ Lyons,
Hjalmar Rosengren and Tamer Tlas
for help with some questions and references.
The second author acknowledges the support of the Swedish Research 
Council, grant no.\ 2016-03835.
%The third author acknowledges the support of the Swedish Research......

\addcontentsline{toc}{section}{References}
\thispagestyle{empty}
\bibliographystyle{plain}
%\bibliography{bibliography.bib}

\begin{thebibliography}{9}

\bibitem{BS}
Bisgaard, Torben Maack and Sasvari, Zoltan, 
Characteristic functions  and  moment sequences: Positive definiteness in probability,
2000, Nova Science Publishers, Inc.

\bibitem{Bond}
Bondesson, Lennart, Generalized Gamma convolutions and related classes of distributions and
densities, Lecture Notes in Statistics, 1992, Springer Verlag,

\bibitem{chung2001course}
Chung, Kai Lai, A course in probability theory,
2001, Academic press.

\bibitem{durrett2019probability} 
Durrett, Rick, 
Probability: Theory and Examples,
version 5, January 11, 2019.

\bibitem{folland1999real} 
Folland, Gerald, Real analysis: modern techniques and their applications,  Second Edition,
1999, John Wiley \& Sons.

\bibitem{lukacs1970characteristic}
Lukacs, Eugene, Characteristic functions,
1970, Hafner Publishing Co.

\bibitem{lyons1995seventy} 
Lyons, Russell,
Seventy years of Rajchman measures,
Proceedings of the Conference in Honor of Jean-Pierre Kahane (Orsay, 1993). 
J. Fourier Anal. Appl. 1995, Special Issue, 363–377. 

\bibitem{menshov}
Menshov, Dmitrii Evgenevich, 
Sur l'unicit\'e du d\'eveloppement
trigonom\'etrique, {\it C. R. Acad. Sc. Paris}, S\'er. A-B {\bf 163}, (1916) 
433--436.

\bibitem{polya}
P\'{o}lya, George,  On the zeros of an integral function represented by Fourier's integral,
Messenger of Mathematics {\bf 52} (1923), 185--188.

%\bibitem{samoradnitsky2017stable}
%Samorodnitsky, Gennady and Taqqu, Murad S. 
%Stable non-Gaussian random processes: stochastic models with 
%infinite variance, 1994, Chapman \& Hall.

\bibitem{sato1999levy}
Sato, Ken-iti, 
L{\'e}vy processes and infinitely divisible distributions,
1999, Cambridge University Press.

\bibitem{SH}
Steutel, Fred W. and van Harn, Klaas,
Infinite divisibility of probability distributions on the real line, 2004
Marcel Dekker, Inc.


%\bibitem{h2005}
%   Enkelejd Hashorva,
%   \textit{Asymptotics and bounds for multivariate Gaussian tails},
%   Journal of Theoretical Probability, Vol. 18, No. 1 pp.\ 79–97 (2005).

\bibitem {TPT}  Tibshirani, Ryan J. Price,  Andrew  and Taylor Jonathan,
A Statistician Plays Darts, Journal of the Royal Statistical Society Series A, 
{\bf 174} No. 1,  (2011), 213--226, 


\bibitem{tlas2020bump}
Tlas, Tamer,
Bump Functions With Monotone Fourier Transforms Satisfying Decay Bounds,
arXiv preprint arXiv:2003.12364, 2020.


\end{thebibliography}

\end{document}